\numberwithin{equation}{section}
\newtheorem{theorem}{Theorem}
\newtheorem{proposition}[theorem]{Proposition}
\newtheorem{corollary}[theorem]{Corollary}
\newtheorem{conjecture}[theorem]{Conjecture}
\theoremstyle{remark}
\newtheorem*{remark}{Remark}
\def\ord{\operatorname{ord}}
\def\SL{\operatorname{SL}}
\def\fl#1{\left\lfloor#1\right\rfloor}
\begin{document}

\title[Generating functions for sequences of squares]
{Infinite product formulae for generating functions for
sequences of squares}
\author[C.~Krattenthaler]{Christian Krattenthaler$^\dagger$}
\address[C.~Krattenthaler]{Universit{\"a}t Wien, Fakult{\"a}t f{\"u}r Mathematik, Oskar-Morgenstern-Platz 1, A-1090 Vienna, Austria. WWW: {\tt https://www.mat.univie.ac.at/\lower0.5ex\hbox{\~{}}kratt}.}
\author[M.~Merca]{Mircea Merca}
\address[M.~Merca]{Department of Mathematics, University of Craiova, Craiova, Romania; Academy of Romanian Scientists, Bucharest, Romania}
\author[C.-S.~Radu]{Cristian-Silviu Radu$^\dagger$}
\address[C.-S. Radu]{Research Institute for Symbolic Computation\\
J. Kepler University Linz,
A-4040 Linz, Austria. WWW: {\tt https://risc.jku.at/m/cristian-silviu-radu/}}

\thanks{$^\dagger$Supported by the Austrian
Science Foundation FWF (grant S50-N15)
in the framework of the Special Research Program
``Algorithmic and Enumerative Combinatorics".
}

\keywords{Jacobi triple product identity, Jacobi theta functions, 
Weierstra{\ss} relation for theta products, modular functions}
\subjclass[2010]{Primary 11B65; Secondary 05A30, 11F27, 33D05}

\begin{abstract}
We state and prove product formulae for several generating functions
for sequences
$(a_n)_{n\ge0}$ that are defined by the property that $Pa_n+b^2$
is a square, where $P$ and $b$ are given integers. In particular,
we prove corresponding conjectures of the second author. We show
that, by means of the Jacobi triple product identity, 
all these generating functions can be reduced to a linear
combination of theta function products. The proof of our
formulae then consists in simplifying these linear combinations
of theta products into single products. We do this in two ways:
(1) by using modular function theory, and (2) by applying
the Weierstra{\ss} addition formula for theta products.
\end{abstract}

\maketitle

\section{Introduction}
Relations between infinite $q$-series and infinite $q$-products have
an honorable history starting with Euler and Gau{\ss} and first
studied systematically by Jacobi. Many identities of the form  
$$\text{infinite $q$-series = infinite $q$-product}$$
arise in number theory, analysis, combinatorics, the theory of integer
partitions, representation theory of Lie algebras, vertex 
operator algebras, knot theory, and 
statistical mechanics. Playing with series and products, Euler
discovered the pentagonal number theorem (cf.\ e.g.\ \cite[Ex.~2.18]{GaRaAA}), 
\begin{equation} \label{eq:pent} 
\sum_{n=-\infty}^\infty (-1)^n q^{n(3n-1)/2} = (q;q)_\infty,
\end{equation}
which is the very first theorem that is of this type.
Here, and in the following, $q$ is a complex number 
with\footnote{Alternatively, all definitions and identities may
  be understood in the sense of formal power series in~$q$.} $\vert
q\vert<1$,
and the symbol $(a;q)_\infty$ denotes the infinite product
$$
(a;q)_\infty:=
\prod _{i=0} ^{\infty}(1-aq^i).
$$
Moreover, we use the short notation
$$
(a_1,a_2,\dots,a_m;q):=
\prod _{j=1} ^{m}(a_j;q)_\infty.
$$

An alternative way to state Euler's identity is as follows:

\bigskip
{\it Let\/ $(a_n)_{n\ge0}=(0, 1, 2, 5, 7, 12, 15, 22, 26, 35,
40, \dots)$ be the sequence of non-negative integers $m$ such
that $24m+1$ is a square. Then
\begin{equation*}
\sum_{n=0}^\infty(-1)^{\fl{(n+1)/2}}q^{a_n}=
(q;q)_\infty.
\end{equation*}}
\bigskip

In \cite{MercAA}, the second author
studied --- among others --- series-product identities of this type and
listed several empirically found such formulae, see
\cite[Ids.~5.1--5.6, Ids.~6.1--6.14]{MercAA}.
A typical example (cf.\ \cite[Id.~6.1]{MercAA} and
Theorem~\ref{thm:240m+1} below) is the following
statement:

\bigskip
{\it Let\/ $(a_n)_{n\ge0}=(0, 4, 7, 10, 21, 26, 33, 59,
61, 95, 108, \dots)$ be the sequence of non-negative integers $m$ such
that $240m+1$ is a square. Then
\begin{equation} \label{eq:Id6.1A}
\sum_{n=0}^\infty(-1)^{\fl{(n+2)/4}}q^{a_n}=
 \frac {(q,q^7,q^8;q^8)_\infty\,(q^6,q^{10};q^{16})_\infty} 
{(q,q^4;q^5)_\infty}.
\end{equation}}
\bigskip

This paper started by the observation that, by the use of 
Jacobi's triple product identity, the proofs of all these conjectured
formulae can be reduced to the verification of certain identities between
(specialised) Jacobi theta functions. Now, it is a folklore
fact that, since these theta functions are modular functions
(for certain subgroups of $\SL_2(\mathbb Z)$), such identities are routinely
verifiable. This is what we did, first. Subsequently, however, we
wanted to have a conceptual understanding of the established formulae.
Moreover,
we were interested in whether there is more than just these, first empirically
found, formulae from \cite{MercAA}. Indeed, further computer
experiments led us to discover many more such formulae, but still
of sporadic nature. Nevertheless, altogether, they helped us
to come up with two parametric theorems that subsume several
of the earlier empirically found formulae under one roof.

Obviously, parametric theorems cannot be routinely verified anymore.
Rather we found that Weierstra\ss' three-term relation between
theta products is the ``magic" identity that is behind {\it all\/} of
our formulae. More precisely, (aside from four formulae that are
direct consequences of the Jacobi triple product identity) 
for one class of formulae the verification consists in a single
application of Weierstra\ss' relation which reduces the sum of two
theta products to a single theta product, while for a second class
of formulae the verification requires a double application of
Weierstra\ss' relation --- a first application
to reduce the sum of four theta products
to the sum of two theta products, and then another application to
reduce the latter to a single theta product. 

Our parametric
theorems belong both to the first class. At this point in time,
we are not able to offer a conceptual understanding for
our proofs for the second class of formulae in the sense that
we were not able to embed these into parametric families of formulae.
Rather, we performed some computer assisted searches for more
formulae in the second class that remained unsuccessful, suggesting
that there may not be parametric families in the second class but
only these sporadic formulae.

\medskip
Our paper is structured as follows.
In the next section we collect the empirically found identities
from \cite{MercAA}. (We have slightly altered the order in which they
are presented in order to provide a more systematic listing.) 
They become theorems by the proofs in
Sections~\ref{sec:mech} and~\ref{sec:Riemann}.
Section~\ref{sec:Ids2} lists the additional identities that we found 
by our computer experiments subsequent to the publication of
\cite{MercAA}. 

The first step in all our proofs is to apply
Jacobi's triple product identity that converts the left-hand sides
of our identities into a linear combination of products of
theta functions. We recall Jacobi's identity in
Section~\ref{sec:Jacobi}, where we also derive two (well-known)
corollaries that we need in some of the proofs.

There follows another section of preparatory character, namely
Section~\ref{sec:mod}, in which facts from the theory of modular
functions are collected that are relevant in our context.
Based on them, we explain in Section~\ref{sec:auto} how to routinely
verify identities between theta products. In the subsequent section,
Section~\ref{sec:mech}, we apply this methodology to prove all the
theorems from Sections~\ref{sec:Ids1} and~\ref{sec:Ids2}
(with the exception of the theorems that are
special cases of parametric families). 
We provide full details for Theorem~\ref{thm:840m+361}, while for
all other theorems proofs are given in a stenographic fashion
since the pattern is always the same. 

Section~\ref{sec:theta} is again preparatory. There, we recall
the earlier mentioned Weierstra\ss' addition formula, and two
of its corollaries that we shall use particularly frequently.
Then, in Section~\ref{sec:Riemann}, we
present our proofs of all theorems from Sections~\ref{sec:Ids1}
and~\ref{sec:Ids2} (again with the exception of the theorems that are
special cases of parametric families) 
by the use of the Weierstra{\ss} relation.
Again, we give full details for the proof of Theorem~1, while for
all other theorems proofs are presented only in an abridged fashion.

The contents of Section~\ref{sec:Ids3} are
two parametric families of formulae of the type as in
\eqref{eq:Id6.1A}; one of them consists in Theorem~\ref{thm:24Pm+a^2}
and Corollary~\ref{cor:24Pm+a^2} (although these contain different
statements, their proofs are the same), and the other in
Theorem~\ref{thm:3Pm+a^2}
and Corollary~\ref{cor:3Pm+a^2} (again, these contain different
statements, but their proofs are the same).
Finally, in Theorem~\ref{thm:16m+a^2} we unify the statements
of Theorems~\ref{thm:16m+1} and~\ref{thm:16m+9}, and we provide
a uniform proof. 

We close the article by mentioning some consequences
and open problems in Section~\ref{sec:open}.

\section{Generating functions for sequences of squares}
\label{sec:Ids1}

Here, we list the empirically found formulae for generating functions
for sequences of squares from \cite{MercAA}. They become theorems
by the proofs in Sections~\ref{sec:mech} and~\ref{sec:Riemann},
respectively. 

\begin{theorem}[conjectured in {\cite[Id.~5.1]{MercAA}}]
\label{thm:840m+361}
Let\/ $(a_n)_{n\ge0}$ be the sequence of non-negative integers $m$ such
that $840m+361$ is a square. Then
\begin{equation} \label{eq:Id5.1}
\sum_{n=0}^\infty(-1)^{t(n)}q^{a_n}=
 \frac {(q,q^6,q^7;q^7)_\infty} {(q,q^4;q^5)_\infty},
\end{equation}
where
$$
t(n)=\begin{cases} 
0,&\text{if }n\equiv0,1,3,5,10,12,14,15\ (\text{\em mod }16),\\
1,&\text{otherwise.}
\end{cases}
$$
\end{theorem}

\begin{theorem}[conjectured in {\cite[Id.~5.2]{MercAA}}]
\label{thm:840m+529}
Let\/ $(a_n)_{n\ge0}$ be the sequence of non-negative integers $m$ such
that $840m+529$ is a square. Then
\begin{equation} \label{eq:Id5.2}
\sum_{n=0}^\infty(-1)^{t(n)}q^{a_n}=
 \frac {(q,q^6,q^7;q^7)_\infty} {(q^2,q^3;q^5)_\infty},
\end{equation}
where
$$
t(n)=\begin{cases} 
0,&\text{if }n\equiv0,2,3,6,9,12,13,15\ (\text{\em mod }16),\\
1,&\text{otherwise.}
\end{cases}
$$
\end{theorem}

\begin{theorem}[conjectured in {\cite[Id.~5.3, corrected]{MercAA}}]
\label{thm:840m+121}
Let\/ $(a_n)_{n\ge0}$ be the sequence of non-negative integers $m$ such
that $840m+121$ is a square. Then
\begin{equation} \label{eq:Id5.3}
\sum_{n=0}^\infty(-1)^{\fl{(n+4)/8}}q^{a_n}=
 \frac {(q^2,q^5,q^7;q^7)_\infty} {(q,q^4;q^5)_\infty}.
\end{equation}
\end{theorem}

\begin{theorem}[conjectured in {\cite[Id.~5.4]{MercAA}}]
\label{thm:840m+289}
Let\/ $(a_n)_{n\ge0}$ be the sequence of non-negative integers $m$ such
that $840m+289$ is a square. Then
\begin{equation} \label{eq:Id5.4}
\sum_{n=0}^\infty(-1)^{t(n)}q^{a_n}=
 \frac {(q^2,q^5,q^7;q^7)_\infty} {(q^2,q^3;q^5)_\infty},
\end{equation}
where
$$
t(n)=\begin{cases} 
0,&\text{if }n\equiv0,1,3,5,10,12,14,15\ (\text{\em mod }16),\\
1,&\text{otherwise.}
\end{cases}
$$
\end{theorem}

\begin{theorem}[conjectured in {\cite[Id.~5.5, corrected]{MercAA}}]
\label{thm:840m+1}
Let\/ $(a_n)_{n\ge0}$ be the sequence of non-negative integers $m$ such
that $840m+1$ is a square. Then
\begin{equation} \label{eq:Id5.5}
\sum_{n=0}^\infty(-1)^{\fl{(n+4)/8}}q^{a_n}=
 \frac {(q^3,q^4,q^7;q^7)_\infty} {(q,q^4;q^5)_\infty}.
\end{equation}
\end{theorem}

\begin{theorem}[conjectured in {\cite[Id.~5.6]{MercAA}}]
\label{thm:840m+169}
Let\/ $(a_n)_{n\ge0}$ be the sequence of non-negative integers $m$ such
that $840m+169$ is a square. Then
\begin{equation} \label{eq:Id5.6}
\sum_{n=0}^\infty(-1)^{t(n)}q^{a_n}=
 \frac {(q^3,q^4,q^7;q^7)_\infty} {(q^2,q^3;q^5)_\infty},
\end{equation}
where
$$
t(n)=\begin{cases} 
0,&\text{if }n\equiv0,1,2,4,11,13,14,15\ (\text{\em mod }16),\\
1,&\text{otherwise.}
\end{cases}
$$
\end{theorem}

\begin{theorem}[conjectured in {\cite[Id.~6.1, corrected]{MercAA}}]
\label{thm:240m+1}
Let\/ $(a_n)_{n\ge0}$ be the sequence of non-negative integers $m$ such
that $240m+1$ is a square. Then
\begin{equation} \label{eq:Id6.1}
\sum_{n=0}^\infty(-1)^{\fl{(n+2)/4}}q^{a_n}=
 \frac {(q,q^7,q^8;q^8)_\infty\,(q^6,q^{10};q^{16})_\infty} 
{(q,q^4;q^5)_\infty}.
\end{equation}
\end{theorem}

\begin{theorem}[conjectured in {\cite[Id.~6.2]{MercAA}}]
\label{thm:240m+49}
Let\/ $(a_n)_{n\ge0}$ be the sequence of non-negative integers $m$ such
that $240m+49$ is a square. Then
\begin{equation} \label{eq:Id6.2}
\sum_{n=0}^\infty(-1)^{\fl{5n/4}}q^{a_n}=
 \frac {(q,q^7,q^8;q^8)_\infty\,(q^6,q^{10};q^{16})_\infty} 
{(q^2,q^3;q^5)_\infty}.
\end{equation}
\end{theorem}

\begin{theorem}[conjectured in {\cite[Id.~6.5, corrected]{MercAA}}]
\label{thm:240m+121}
Let\/ $(a_n)_{n\ge0}$ be the sequence of non-negative integers $m$ such
that $240m+121$ is a square. Then
\begin{equation} \label{eq:Id6.5}
\sum_{n=0}^\infty(-1)^{\fl{(n+2)/4}}q^{a_n}=
 \frac {(q^3,q^5,q^8;q^8)_\infty\,(q^2,q^{14};q^{16})_\infty} 
{(q,q^4;q^5)_\infty}.
\end{equation}
\end{theorem}

\begin{theorem}[conjectured in {\cite[Id.~6.6]{MercAA}}]
\label{thm:240m+169}
Let\/ $(a_n)_{n\ge0}$ be the sequence of non-negative integers $m$ such
that $240m+169$ is a square. Then
\begin{equation} \label{eq:Id6.6}
\sum_{n=0}^\infty(-1)^{\fl{5n/4}}q^{a_n}=
 \frac {(q^3,q^5,q^8;q^8)_\infty\,(q^2,q^{14};q^{16})_\infty} 
{(q^2,q^3;q^5)_\infty}.
\end{equation}
\end{theorem}

\begin{theorem}[conjectured in {\cite[Id.~6.3, corrected]{MercAA}}]
\label{thm:15m+1}
Let\/ $(a_n)_{n\ge0}$ be the sequence of non-negative integers $m$ such
that $15m+1$ is a square. Then
\begin{equation} \label{eq:Id6.3}
\sum_{n=0}^\infty(-1)^{\fl{(n+2)/4}}q^{a_n}=
 \frac {(q^2,q^6,q^8;q^8)_\infty\,(q^4,q^{12};q^{16})_\infty} 
{(q,q^4;q^5)_\infty}.
\end{equation}
\end{theorem}

\begin{theorem}[conjectured in {\cite[Id.~6.4, corrected]{MercAA}}]
\label{thm:15m+4}
Let\/ $(a_n)_{n\ge0}$ be the sequence of non-negative integers $m$ such
that $15m+4$ is a square. Then
\begin{equation} \label{eq:Id6.4}
\sum_{n=0}^\infty(-1)^{\fl{(n+2)/4}}q^{a_n}=
 \frac {(q^2,q^6,q^8;q^8)_\infty\,(q^4,q^{12};q^{16})_\infty} 
{(q^2,q^3;q^5)_\infty}.
\end{equation}
\end{theorem}

\begin{theorem}[conjectured in {\cite[Id.~6.7]{MercAA}}]
\label{thm:6.7}
We have
\begin{equation} \label{eq:Id6.7}
  \sum_{n=-\infty}^\infty q^{n(5n+1)}
= \frac {(q,q^9,q^{10};q^{10})_\infty\,(q^8,q^{12};q^{20})_\infty}
{(q,q^4;q^5)_\infty}.
\end{equation}
\end{theorem}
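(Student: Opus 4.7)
The sum on the left is a pure bilateral theta series with no alternating sign, so, in contrast to most of the earlier theorems in Section~\ref{sec:Ids1}, no linear combination of theta products will appear and no Weierstra{\ss} relation will be needed. The whole identity should reduce to a single application of Jacobi's triple product identity, followed by a routine reshuffling of $q$-Pochhammer symbols.

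First I would rewrite the exponent as
$$
n(5n+1)=5n^2+n=10\cdot\frac{n(n-1)}{2}+6n,
$$
so that
$$
\sum_{n=-\infty}^\infty q^{n(5n+1)}
=\sum_{n=-\infty}^\infty (q^{6})^{n}\,(q^{10})^{n(n-1)/2}.
$$
Then I apply Jacobi's triple product identity (in the form to be recalled in Section~\ref{sec:Jacobi}, namely $\sum_{n\in\Z}(-z)^{n}q^{n(n-1)/2}=(z,q/z,q;q)_\infty$) with $q\mapsto q^{10}$ and $z=-q^{6}$, which yields
$$
\sum_{n=-\infty}^\infty q^{n(5n+1)}=(-q^{4},-q^{6},q^{10};q^{10})_\infty.
$$

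The second step is purely formal. Using $1+q^{a}=(1-q^{2a})/(1-q^{a})$ one gets
$$
(-q^{4},-q^{6};q^{10})_\infty
=\frac{(q^{8},q^{12};q^{20})_\infty}{(q^{4},q^{6};q^{10})_\infty},
$$
which produces the desired factor $(q^{8},q^{12};q^{20})_\infty$ of the right-hand side. What remains is the elementary identity
$$
(q,q^{4};q^{5})_\infty=(q,q^{9};q^{10})_\infty\,(q^{4},q^{6};q^{10})_\infty,
$$
obtained by splitting $(q;q^{5})_\infty$ and $(q^{4};q^{5})_\infty$ according to the parity of the running index (the exponents of the former split as $\{1,11,21,\dots\}\cup\{6,16,26,\dots\}$, and similarly for the latter).

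Combining these three steps gives
$$
(q^{10};q^{10})_\infty\,\frac{(q^{8},q^{12};q^{20})_\infty}{(q^{4},q^{6};q^{10})_\infty}
=\frac{(q,q^{9},q^{10};q^{10})_\infty\,(q^{8},q^{12};q^{20})_\infty}{(q,q^{4};q^{5})_\infty},
$$
which is \eqref{eq:Id6.7}. I do not expect any real obstacle: every step is mechanical, and this is one of the ``four formulae that are direct consequences of the Jacobi triple product identity'' alluded to in the introduction.
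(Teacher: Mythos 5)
Your proposal is correct and takes exactly the paper's route: the paper proves Theorem~\ref{thm:6.7} by a single application of the Jacobi triple product identity \eqref{eq:JTP} with $q$ replaced by $q^{10}$ and $z=-q^{6}$, which is precisely your first step. The remaining Pochhammer manipulations you spell out (converting $(-q^4,-q^6;q^{10})_\infty$ and splitting $(q,q^4;q^5)_\infty$ modulo $10$) are left implicit in the paper but are correct.
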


\begin{theorem}[conjectured in {\cite[Id.~6.8]{MercAA}}]
\label{thm:6.8}
We have
\begin{equation} \label{eq:Id6.8}
  \sum_{n=0}^\infty \big(q^{n(n+1)}- q^{5n(n+1)+1}\big)
= \frac {(q,q^9,q^{10};q^{10})_\infty\,(q^8,q^{12};q^{20})_\infty}
{(q^2,q^3;q^5)_\infty}.
\end{equation}
\end{theorem}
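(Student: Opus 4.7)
My plan is to follow the two-step strategy used throughout the paper. First, using the symmetry $n(n+1)=(-n-1)(-n)$, each one-sided series on the left-hand side of~\eqref{eq:Id6.8} is exactly one half of the corresponding bilateral theta series:
$$
\sum_{n=0}^\infty q^{P n(n+1)} \;=\; \tfrac{1}{2}\sum_{n=-\infty}^{\infty} q^{P n^2+P n},\qquad P\in\{1,5\}.
$$
Applying the Jacobi triple product identity to each bilateral sum, with $q$ replaced by $q^P$ and $z=q^P$, and then using $(-1;q^{2P})_\infty=2(-q^{2P};q^{2P})_\infty=2/(q^{2P};q^{4P})_\infty$ to remove the singular factor, the left-hand side collapses to the difference of two infinite products,
$$
\frac{(q^4;q^4)_\infty}{(q^2;q^4)_\infty}\;-\;q\,\frac{(q^{20};q^{20})_\infty}{(q^{10};q^{20})_\infty}.
$$

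The second step is to identify this difference with the right-hand side of~\eqref{eq:Id6.8}. After expressing every $q$-Pochhammer symbol as a product of factors with base $q^{20}$ and clearing the denominator $(q^2,q^3;q^5)_\infty$, the identity takes the shape ``difference of two theta products equals a single theta product'', placing it in the first class of identities described in the introduction. I would therefore apply a suitable specialization of Weierstra\ss' three-term relation from Section~\ref{sec:theta}, with parameters chosen so that the three theta products appearing in the relation match the three products in the identity. Alternatively, the mechanical modular-function procedure of Section~\ref{sec:auto} applies: both sides become modular functions on an appropriate congruence subgroup of~$\SL_2(\mathbb Z)$, and equality then reduces to a comparison of finitely many Fourier coefficients.

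The main obstacle, shared with all identities in the first class, is pinpointing the correct specialization of Weierstra\ss' relation: the exponents appearing in the three theta products must correspond to a valid choice of parameters, and matching them up is the only genuinely non-routine work. Once the correct specialization has been identified, the remainder of the verification is an entirely mechanical comparison of exponents. It may also be useful to bear in mind that the numerator of the right-hand side of~\eqref{eq:Id6.8} coincides with that of Theorem~\ref{thm:6.7}; any Weierstra\ss\ specialization used successfully for the latter (even though Theorem~\ref{thm:6.7} itself follows directly from the Jacobi triple product identity) may suggest useful parameter choices in the present setting.
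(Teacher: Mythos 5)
Your proposal follows the paper's own strategy --- indeed the paper gives two proofs of this theorem, and you sketch both. Your first step is correct and produces the same difference of products as the paper's \eqref{eq:6.8}, though the substitution you state is off: to get $\sum_{n}q^{Pn(n+1)}$ from \eqref{eq:JTP} one takes $q\to q^{2P}$ and $z=-q^{2P}$ (not $q\to q^P$, $z=q^P$, which gives $\sum_n(-1)^nq^{Pn(n+1)/2}$); the product you end up with is nonetheless the right one. The modular-function route you offer as a fallback is exactly the paper's first proof (one works with $\Gamma_1(20)$, the order bound at $i\infty$ is $4$, so checking five coefficients suffices) and is complete as stated. For the Weierstra{\ss} route you stop short of the single non-routine ingredient, which the paper supplies: after writing the left-hand side as $\frac{(q^{20};q^{20})_\infty^2}{(q^{10};q^{10})_\infty}\left(\frac{\theta(q^4;q^{20})\,\theta(q^8;q^{20})}{\theta(q^2;q^{20})\,\theta(q^6;q^{20})}-q\right)$, one replaces $q$ by $q^{20}$ in \eqref{eq:tadd} and chooses $u=q^5$, $v=q^2$, $x=q^{12}$, $y=q^4$, which converts the bracket into the desired single quotient of theta products.
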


\begin{theorem}[conjectured in {\cite[Id.~6.9]{MercAA}}]
\label{thm:6.9}
We have
\begin{equation} \label{eq:Id6.9}
1+  \sum_{n=1}^\infty \big(q^{n^2}+q^{5n^2}\big)
= \frac {(q^2,q^8,q^{10};q^{10})_\infty\,(q^6,q^{14};q^{20})_\infty}
{(q,q^4;q^5)_\infty}.
\end{equation}
\end{theorem}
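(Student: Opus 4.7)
The plan is to rewrite the left-hand side of \eqref{eq:Id6.9} as an average of two bilateral theta-function sums. Since
\begin{equation*}
\sum_{n=-\infty}^\infty q^{cn^2}=1+2\sum_{n=1}^\infty q^{cn^2}
\end{equation*}
for any positive integer~$c$, adding these two identities for $c=1$ and $c=5$ and dividing by~$2$ shows that the left-hand side of \eqref{eq:Id6.9} equals
$\frac{1}{2}\sum_{n\in\mathbb{Z}}q^{n^2}+\frac{1}{2}\sum_{n\in\mathbb{Z}}q^{5n^2}$.
Applying Jacobi's triple product identity, to be recalled in Section~\ref{sec:Jacobi}, specialised at $z=1$ with $q$ replaced in turn by $q^2$ and $q^{10}$, converts each bilateral sum into a theta product, so that the left-hand side of \eqref{eq:Id6.9} becomes
\begin{equation*}
\tfrac{1}{2}(q^2;q^2)_\infty(-q;q^2)_\infty^2+\tfrac{1}{2}(q^{10};q^{10})_\infty(-q^5;q^{10})_\infty^2.
\end{equation*}

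It then remains to show that this sum of two theta products equals the single infinite product on the right-hand side of \eqref{eq:Id6.9}. According to the classification described in the introduction, the identity falls into the first class, so a single application of the Weierstra{\ss} three-term addition formula recalled in Section~\ref{sec:theta} should collapse the two theta products on the left into one product matching the right-hand side, in the style of the proofs of Section~\ref{sec:Riemann}. Alternatively, one can follow the modular-function approach of Section~\ref{sec:mech}: after clearing denominators, both sides become modular functions on a suitable congruence subgroup of $\SL_2(\mathbb{Z})$, and the equality is then reduced to matching finitely many Fourier coefficients.

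The main obstacle will be choosing the parameters in the Weierstra{\ss} relation so that the two theta products on the left actually combine into the desired single product on the right. The appearance of the Rogers--Ramanujan denominator $(q,q^4;q^5)_\infty$ on the right-hand side strongly suggests that the correct parameters will involve fifth powers of~$q$, so that the resulting theta products line up with those indexed modulo~$5$. For the modular-function route, the analogous difficulty lies in pinpointing a congruence subgroup of sufficiently small index in $\SL_2(\mathbb{Z})$ to keep the cusp analysis and the coefficient matching computationally tractable.
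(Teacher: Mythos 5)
Your opening reduction is correct and is exactly the paper's first step: writing the left-hand side as $\tfrac12\sum_{n\in\mathbb Z}q^{n^2}+\tfrac12\sum_{n\in\mathbb Z}q^{5n^2}$ and applying the triple product identity gives precisely \eqref{eq:6.9}, i.e.\ $\tfrac12\big((q^2;q^2)_\infty(-q;q^2)_\infty^2+(q^{10};q^{10})_\infty(-q^5;q^{10})_\infty^2\big)$. One slip: the specialisation you name, $z=1$ with $q\to q^2$, makes both sides of \eqref{eq:JTP} vanish (the factor $(z;q)_\infty$ contains $1-z$), so it yields only $0=0$. The specialisation that produces $\sum_{n\in\mathbb Z}q^{n^2}=(q^2,-q,-q;q^2)_\infty$ is $q\to q^2$, $z=-q$, as in the derivation of \eqref{eq:n^2}. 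Since you nevertheless wrote down the correct products, this is a cosmetic error rather than a structural one.

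The genuine gap is everything after that. What remains --- showing that the sum of those two theta products equals the right-hand side of \eqref{eq:Id6.9} --- is the entire content of the proof, and you only assert that ``a single application'' of \eqref{eq:tadd} ``should collapse'' the sum, without exhibiting the parameters; your heuristic that they ``will involve fifth powers of $q$'' does not match what actually works. Moreover, the two products are not yet in a form to which \eqref{eq:tadd} applies: they live at different bases ($q^2$ versus $q^{10}$), and the paper first has to invoke Euler's identity in the form \eqref{eq:Euler2} (once as stated and once with $q\to q^5$) to rewrite both summands as quotients of theta functions with the common base $q^{10}$; only then does \eqref{eq:tadd} with $u=-q^3$, $v=q$, $x=q^4$, $y=q^3$ produce the three-term relation that collapses the sum. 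This preparatory manipulation is a non-obvious idea that your plan omits entirely. For the alternative modular-function route you sketch, the missing data are equally essential: one must fix the group $\Gamma_1(20)$, check the criterion \eqref{eq:Ecrit} for each summand, compute orders at all $20$ cusps via \eqref{eq:ord} to obtain the bound $U=4$ on the order at $i\infty$, and then verify that the first five coefficients vanish. Without either of these executions, the proposal is a correct reduction followed by a statement of intent, not a proof.
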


\begin{theorem}[conjectured in {\cite[Id.~6.10]{MercAA}}]
\label{thm:6.10}
We have
\begin{equation} \label{eq:Id6.10}
  \sum_{n=-\infty}^\infty q^{n(5n+2)}
= \frac {(q^2,q^8,q^{10};q^{10})_\infty\,(q^6,q^{14};q^{20})_\infty}
{(q^2,q^3;q^5)_\infty}.
\end{equation}
\end{theorem}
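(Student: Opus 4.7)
The plan is to recognize the bilateral sum on the left as a specialisation of Ramanujan's theta function $f(a,b) = \sum_{n=-\infty}^\infty a^{n(n+1)/2} b^{n(n-1)/2}$, and then to massage the resulting product form into the shape of the right-hand side. Concretely, setting $a = q^7$ and $b = q^3$ one has $a^{n(n+1)/2}b^{n(n-1)/2} = q^{7n(n+1)/2+3n(n-1)/2} = q^{n(5n+2)}$, so Jacobi's triple product identity (as recalled in Section~\ref{sec:Jacobi}) immediately yields
\[
\sum_{n=-\infty}^\infty q^{n(5n+2)} = (-q^3, -q^7, q^{10}; q^{10})_\infty.
\]

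The remaining task is purely formal bookkeeping with $q$-Pochhammer symbols. Using the elementary identity $(-q^k;q^m)_\infty = (q^{2k};q^{2m})_\infty/(q^k;q^m)_\infty$, which follows from $1+x = (1-x^2)/(1-x)$, applied to the pairs $(k,m) = (3,10)$ and $(k,m) = (7,10)$, I would rewrite the above expression as
\[
\frac{(q^{10};q^{10})_\infty\,(q^6,q^{14};q^{20})_\infty}{(q^3,q^7;q^{10})_\infty}.
\]
Independently, splitting the factors in the denominator of the stated right-hand side as $(q^2;q^5)_\infty = (q^2,q^7;q^{10})_\infty$ and $(q^3;q^5)_\infty = (q^3,q^8;q^{10})_\infty$ exposes the cancellations of $(q^2;q^{10})_\infty$ and $(q^8;q^{10})_\infty$ with the corresponding factors of its numerator; what remains is precisely the displayed expression.

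There is no genuine obstacle in this proof: once Jacobi's triple product identity has been invoked in the form above, the identity collapses to the elementary $q$-Pochhammer manipulations just described, and no appeal to Weierstra{\ss}' three-term relation is required. This is exactly one of the four formulae that, as pointed out in the introduction, reduce directly to the Jacobi triple product identity, and it is the simplest of the four because the left-hand side is already a single bilateral theta series rather than a linear combination of two or more such series.
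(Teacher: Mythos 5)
Your proof is correct and follows the same route as the paper, which also derives the identity as a direct consequence of the Jacobi triple product identity (replacing $q$ by $q^{10}$ and taking $z=-q^{7}$, which is exactly your $f(q^7,q^3)$ specialisation). The only difference is that you spell out the elementary $(-q^k;q^m)_\infty$ manipulations needed to match the product to the stated right-hand side, which the paper leaves implicit.
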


\begin{theorem}[conjectured in {\cite[Id.~6.11]{MercAA}}]
\label{thm:6.11}
We have
\begin{equation} \label{eq:Id6.11}
  \sum_{n=0}^\infty \big(q^{n(n+1)}+ q^{5n(n+1)+1}\big)
= \frac {(q^3,q^7,q^{10};q^{10})_\infty\,(q^4,q^{16};q^{20})_\infty}
{(q,q^4;q^5)_\infty}.
\end{equation}
\end{theorem}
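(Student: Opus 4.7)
The plan is to follow the two-step strategy used throughout Section~\ref{sec:Riemann}: apply the Jacobi triple product identity to convert the left-hand side of \eqref{eq:Id6.11} into a sum of two Jacobi theta products, and then apply the Weierstra\ss\ three-term relation recalled in Section~\ref{sec:theta} to collapse that sum into the single product on the right-hand side.

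For the first step, observe that $n(n+1)$ is invariant under $n\mapsto -1-n$, whence
\begin{equation*}
\sum_{n\ge 0} q^{n(n+1)} = \tfrac{1}{2}\sum_{n\in\Z} q^{n(n+1)}
\qquad\text{and}\qquad
\sum_{n\ge 0} q^{5n(n+1)+1} = \tfrac{q}{2}\sum_{n\in\Z} q^{5n(n+1)}.
\end{equation*}
Each bilateral sum is a specialisation of the Jacobi triple product: after reindexing, they take the form $\sum_n z^n Q^{n(n-1)/2}$ with base $Q=q^2$ and $Q=q^{10}$ respectively. Evaluating yields the LHS of \eqref{eq:Id6.11} as a sum of two explicit theta products; I would record both in the canonical form $(a,b,ab;ab)_\infty$ on which Weierstra\ss' relation acts naturally.

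For the second step, I would apply Weierstra\ss' addition formula with a specialisation suggested by the shape of the RHS. The denominator $(q,q^4;q^5)_\infty$ points to base $q^5$, the factor $(q^3,q^7,q^{10};q^{10})_\infty$ to base $q^{10}$, and the factor $(q^4,q^{16};q^{20})_\infty$ to base $q^{20}$; accordingly, I expect the correct parameter choice to operate at base $q^{20}$ (or $q^{10}$ after suitable factorisation), mirroring the appearance of the factor $q$ in front of the second sum as a ``half-shift'' in the theta argument. Once the right parameters are identified, the two theta products collapse into a single product which, after standard simplifications (splitting base-$q^{10}$ products into base-$q^{20}$ pairs, and using $(x;q)_\infty (xq^d;q)_\infty \cdots = (x;q^{d+1})_\infty$ type manipulations), matches the right-hand side of~\eqref{eq:Id6.11}.

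The main obstacle is identifying the correct specialisation of Weierstra\ss' three-term relation; once that is done the remainder is a routine, if error-prone, matching of exponents between infinite products at bases $q^2$, $q^5$, $q^{10}$ and $q^{20}$. An alternative route, along the lines of Section~\ref{sec:mech}, would multiply the sum-of-two-theta-products identity obtained after Step~1 by a common theta-factor to produce a modular-function identity on $\Gamma_0(N)$ for some $N$ dividing a modest multiple of $20$, and then verify it by comparing finitely many Fourier coefficients; this avoids the guesswork of fitting Weierstra\ss' parameters but requires an explicit computation of the order of vanishing of the candidate theta quotient at the relevant cusps.
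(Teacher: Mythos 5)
Your outline coincides with the paper's strategy: Step~1 is precisely the content of \eqref{eq:n(n+1)}, which gives
$$
\sum_{n=0}^\infty \big(q^{n(n+1)}+ q^{5n(n+1)+1}\big)
=(q^2;q^2)_\infty\,(-q^2;q^2)_\infty^2
+q\,(q^{10};q^{10})_\infty\,(-q^{10};q^{10})_\infty^2,
$$
and Step~2 is a single application of the Weierstra{\ss} relation \eqref{eq:tadd}. However, as a proof the proposal has a genuine gap: the decisive step --- the explicit specialisation of \eqref{eq:tadd} that collapses the two products into one --- is never produced, and you concede this yourself (``the main obstacle is identifying the correct specialisation''). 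Everything you write before that point is routine, and everything after it is conditional on parameters you have not exhibited, so no identity is actually verified. The authors remark in a footnote in Section~\ref{sec:Riemann} that these parameter choices were found only after extensive trial and error and ultimately by an exhaustive computer search over powers of $q$; ``once the right parameters are identified, the rest is routine'' is therefore exactly the part of the argument that cannot be waved through.

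For the record, the paper first pulls out the common factor $(q^{20};q^{20})_\infty^2/(q^{10};q^{10})_\infty$, reducing the claim to evaluating
$\theta(q^4;q^{20})\,\theta(q^8;q^{20})\big/\big(\theta(q^2;q^{20})\,\theta(q^6;q^{20})\big)+q$,
and then applies \eqref{eq:tadd} with $q$ replaced by $q^{20}$ and $u=q^5$, $v=q^2$, $x=q^6$, $y=q^4$; inserting the resulting three-term relation and simplifying yields the right-hand side of \eqref{eq:Id6.11}. Your fallback route via modular functions is sound in principle and is in fact the paper's Section~\ref{sec:mech} proof of this theorem --- note, though, that the relevant group there is $\Gamma_1(20)$ (the criteria quoted from Yang are for $\Gamma_1(N)$, not $\Gamma_0(N)$), with $20$ cusps and a coefficient check only up to $q^4$ --- and it, too, requires the cusp-order computation to be carried out before it constitutes a proof.
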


\begin{theorem}[conjectured in {\cite[Id.~6.12]{MercAA}}]
\label{thm:6.12}
We have
\begin{equation} \label{eq:Id6.12}
  \sum_{n=-\infty}^\infty q^{n(5n+3)}
= \frac {(q^3,q^7,q^{10};q^{10})_\infty\,(q^4,q^{16};q^{20})_\infty}
{(q^2,q^3;q^5)_\infty}.
\end{equation}
\end{theorem}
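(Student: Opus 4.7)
The plan is to observe that the left-hand side of~\eqref{eq:Id6.12} is already a classical bilateral theta series, so that a single application of the Jacobi triple product identity will convert it directly into an infinite product, after which~\eqref{eq:Id6.12} reduces to a routine simplification of $q$-Pochhammer symbols. In particular this theorem should belong to the group of four formulae described in the introduction as direct consequences of the Jacobi triple product identity, requiring neither the Weierstra\ss\ addition formula nor modular-function machinery.

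The first step is to rewrite the exponent as $n(5n+3)=5n(n-1)+8n$, so that, setting $Q:=q^{10}$,
$$
\sum_{n=-\infty}^\infty q^{n(5n+3)}
= \sum_{n=-\infty}^\infty (q^8)^n\,Q^{n(n-1)/2}.
$$
The Jacobi triple product identity (to be recalled in Section~\ref{sec:Jacobi}) then evaluates the right-hand side as $(q^{10},-q^8,-q^2;q^{10})_\infty$. Using the elementary relation $(-a;q)_\infty=(a^2;q^2)_\infty/(a;q)_\infty$ to eliminate the two factors carrying negative arguments, this becomes
$$
(q^{10};q^{10})_\infty\cdot\frac{(q^4,q^{16};q^{20})_\infty}{(q^2,q^8;q^{10})_\infty}.
$$

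It remains to bring the right-hand side of~\eqref{eq:Id6.12} into the same form. For this, I would rewrite the factor $(q^2,q^3;q^5)_\infty$ by splitting each product $\prod_{i\ge 0}(1-q^{5i+r})$ according to the parity of the index~$i$, which yields
$$
(q^2,q^3;q^5)_\infty=(q^2,q^7,q^3,q^8;q^{10})_\infty.
$$
After this substitution, the factors $(q^4,q^{16};q^{20})_\infty$ match immediately on both sides, the factors $(q^3,q^7;q^{10})_\infty$ in the numerator cancel against two of the four Pochhammer symbols in the new denominator, and what remains is $(q^{10};q^{10})_\infty/(q^2,q^8;q^{10})_\infty$ on each side. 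Since every step is a direct manipulation of Pochhammer symbols, I do not anticipate any substantive obstacle; the only point that requires a moment's thought is the initial choice of the parameters $(Q,z)=(q^{10},q^8)$ in the triple product identity, which is forced by matching the coefficients of $n^2$ and $n$ in the exponent.
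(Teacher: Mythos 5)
Your proposal is correct and follows exactly the paper's route: the paper proves Theorem~\ref{thm:6.12} by a single application of the Jacobi triple product identity \eqref{eq:JTP} with $q$ replaced by $q^{10}$ and $z=-q^{8}$, which is precisely your choice $(Q,z)=(q^{10},-q^{8})$. Your subsequent Pochhammer manipulations (eliminating the negative arguments and splitting $(q^2,q^3;q^5)_\infty$ into base-$q^{10}$ factors) correctly carry out the routine verification that the paper leaves implicit.
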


\begin{theorem}[conjectured in {\cite[Id.~6.13]{MercAA}}]
\label{thm:6.13}
We have
\begin{equation} \label{eq:Id6.13}
  \sum_{n=-\infty}^\infty q^{n(5n+4)}
= \frac {(q^4,q^6,q^{10};q^{10})_\infty\,(q^2,q^{18};q^{20})_\infty}
{(q,q^4;q^5)_\infty}.
\end{equation}
\end{theorem}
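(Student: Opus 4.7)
The plan is to prove \eqref{eq:Id6.13} directly from Jacobi's triple product identity, with no recourse to the Weierstra{\ss} addition formula or to the modular-function machinery developed in Sections~\ref{sec:mod}--\ref{sec:auto}. This identity should be one of the ``four formulae that are direct consequences of the Jacobi triple product identity'' mentioned in the introduction, alongside Theorems~\ref{thm:6.7}, \ref{thm:6.10}, and~\ref{thm:6.12}, whose left-hand sides are all of the same bilateral form $\sum_{n\in\mathbb Z}q^{n(5n+c)}$.

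First, I would apply Jacobi's triple product identity in the form $\sum_{n=-\infty}^{\infty}z^{n}q^{n^{2}}=(q^{2},-zq,-q/z;q^{2})_{\infty}$, with the substitutions $q\mapsto q^{5}$ and $z=q^{4}$. Since $n(5n+4)=5n^{2}+4n$, this converts the left-hand side of \eqref{eq:Id6.13} into the single theta product
$$
\sum_{n=-\infty}^{\infty}q^{n(5n+4)}=(q^{10},-q,-q^{9};q^{10})_{\infty}.
$$
On the right-hand side, I would split the exponents of the Rogers--Ramanujan-type factor $(q,q^{4};q^{5})_{\infty}$ modulo $10$ to obtain $(q,q^{4};q^{5})_{\infty}=(q,q^{4},q^{6},q^{9};q^{10})_{\infty}$. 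Clearing this denominator, \eqref{eq:Id6.13} reduces to the purely infinite-product identity
$$
(q^{10},-q,-q^{9};q^{10})_{\infty}\,(q,q^{4},q^{6},q^{9};q^{10})_{\infty}
=(q^{4},q^{6},q^{10};q^{10})_{\infty}\,(q^{2},q^{18};q^{20})_{\infty}.
$$

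The remaining step is to pair the factor $(q;q^{10})_{\infty}$ with $(-q;q^{10})_{\infty}$, and $(q^{9};q^{10})_{\infty}$ with $(-q^{9};q^{10})_{\infty}$, and then to invoke the elementary identity $(a;Q)_{\infty}(-a;Q)_{\infty}=(a^{2};Q^{2})_{\infty}$ (which is nothing more than the termwise $(1-x)(1+x)=1-x^{2}$). This produces exactly the factors $(q^{2};q^{20})_{\infty}$ and $(q^{18};q^{20})_{\infty}$ on the right, while $(q^{4},q^{6},q^{10};q^{10})_{\infty}$ is already present on the left. There is no genuine obstacle here --- the whole argument amounts to a short bookkeeping exercise with infinite products --- and the only point requiring a bit of care is the choice of sign of $z$ in the specialisation of Jacobi's triple product so that the bilateral sum on the left-hand side comes out with positive signs throughout.
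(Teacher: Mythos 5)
Your proposal is correct and is essentially the paper's own proof: the paper also derives \eqref{eq:Id6.13} as a direct consequence of the Jacobi triple product identity (replacing $q$ by $q^{10}$ and taking $z=-q^{9}$ in \eqref{eq:JTP}, which yields the same product $(q^{10},-q,-q^{9};q^{10})_\infty$ as your specialisation). The only difference is that you spell out the elementary bookkeeping with $(a;Q)_\infty(-a;Q)_\infty=(a^2;Q^2)_\infty$ that the paper leaves implicit.
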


\begin{theorem}[conjectured in {\cite[Id.~6.14]{MercAA}}]
\label{thm:6.14}
We have
\begin{equation} \label{eq:Id6.14}
  \sum_{n=1}^\infty \big(q^{n^2-1}- q^{5n^2-1}\big)
= \frac {(q^4,q^6,q^{10};q^{10})_\infty\,(q^2,q^{18};q^{20})_\infty}
{(q^2,q^3;q^5)_\infty}.
\end{equation}
\end{theorem}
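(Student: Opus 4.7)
The plan is to follow the two-step strategy of the paper: first apply Jacobi's triple product identity to rewrite the left-hand side of~\eqref{eq:Id6.14} as a linear combination of theta products, then invoke either the Weierstra\ss{} addition formula (as in Section~\ref{sec:Riemann}) or the modular function theory of Section~\ref{sec:mech} to collapse this combination into the single product on the right-hand side.

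To carry out the first step I would write
\[
\sum_{n=1}^\infty q^{n^2-1}=\frac{1}{2q}\Bigl(\sum_{n=-\infty}^\infty q^{n^2}-1\Bigr)\quad\text{and}\quad
\sum_{n=1}^\infty q^{5n^2-1}=\frac{1}{2q}\Bigl(\sum_{n=-\infty}^\infty q^{5n^2}-1\Bigr),
\]
so that the constant terms cancel in the difference,
\[
\sum_{n=1}^\infty\bigl(q^{n^2-1}-q^{5n^2-1}\bigr)
=\frac{1}{2q}\Bigl(\sum_{n=-\infty}^\infty q^{n^2}-\sum_{n=-\infty}^\infty q^{5n^2}\Bigr).
\]
Jacobi's triple product identity, with $z=1$ and applied once directly and once after $q\mapsto q^5$, converts each bilateral sum into a theta product:
\[
\sum_{n=-\infty}^\infty q^{n^2}=(q^2;q^2)_\infty(-q;q^2)_\infty^2,\qquad
\sum_{n=-\infty}^\infty q^{5n^2}=(q^{10};q^{10})_\infty(-q^5;q^{10})_\infty^2.
\]
Thus the left-hand side of~\eqref{eq:Id6.14} becomes a difference of two theta products divided by $2q$; the identity therefore belongs to the ``first class'' described in the introduction, where the target reduction is from two theta products to a single one.

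The substantive step is then to show that $(q^2,q^3;q^5)_\infty$ times this difference equals the single product $(q^4,q^6,q^{10};q^{10})_\infty(q^2,q^{18};q^{20})_\infty$. The plan is to specialise the Weierstra\ss{} three-term relation recalled in Section~\ref{sec:theta} so that two of the three theta products it relates reproduce, up to the factor $2q$ and the Rogers--Ramanujan denominator, the theta products coming from Jacobi, while the third is exactly the desired product on the right. The main obstacle will be the choice of the free variables in the Weierstra\ss{} relation: the moduli $q^2$, $q^{10}$ and $q^{20}$ and the exponents $2,3,4,5,6,9,10,18$ must be matched simultaneously, which typically forces one to take fractional powers of $q$ (and perhaps a fourth or eighth root of unity) as the arguments, and requires careful bookkeeping of signs and exponents.

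As a cross-check — or as an entirely independent proof — one can run the algorithmic verification of Section~\ref{sec:auto}: after multiplying through by $(q^2,q^3;q^5)_\infty$ and by $2q$, both sides become (essentially) holomorphic modular functions on an appropriate congruence subgroup of $\SL_2(\Z)$, so the identity reduces to a finite comparison of initial Fourier coefficients, exactly in the style in which Theorem~\ref{thm:840m+361} is treated in Section~\ref{sec:mech}.
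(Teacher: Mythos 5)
Your opening reduction is exactly the paper's: both of its proofs of Theorem~\ref{thm:6.14} begin from
\begin{equation*}
\sum_{n=1}^\infty \big(q^{n^2-1}- q^{5n^2-1}\big)
=\frac {1} {2q}\big((q^2;q^2)_\infty\,(-q;q^2)_\infty^2
-(q^{10};q^{10})_\infty\,(-q^5;q^{10})_\infty^2\big),
\end{equation*}
which is \eqref{eq:6.14}. One slip here: you cannot obtain $\sum_{n}q^{n^2}$ from \eqref{eq:JTP} by setting $z=1$ --- that makes the right-hand side vanish because of the factor $(1;q)_\infty$. The correct specialisation, as in the derivation of \eqref{eq:n^2}, is $q\to q^2$ together with $z=-q$; the product you quote is nevertheless the right one.

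The genuine gap is that the decisive step --- collapsing the difference of the two theta products into the single product on the right of \eqref{eq:Id6.14} --- is not carried out in either of your two suggested routes; it is only announced as ``the plan'', and your stated obstacle points in the wrong direction. No fractional powers of $q$ and no roots of unity are needed. The paper's Weierstra{\ss} proof first applies Euler's identity $(-q;q^2)_\infty^{-1}=(q;q^2)_\infty(-q^2;q^2)_\infty$ (equation \eqref{eq:Euler2}, used once as stated and once with $q$ replaced by $q^5$) to rewrite $(q^2;q^2)_\infty(-q;q^2)_\infty^2$ so that both summands acquire the common factor $(q^{10};q^{10})_\infty(-q^5;q^{10})_\infty^2$ and the leftover quotient is a ratio of $\theta(\cdot\,;q^{10})$'s; only then does a single application of \eqref{eq:tadd} with $u=-q^3$, $v=q$, $x=q^4$, $y=q^3$ (plain powers of $q$ up to sign) yield the single product. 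Without this preparatory use of Euler's theorem you will not find a workable specialisation, which is presumably why you were led to contemplate fractional powers. Your fallback via Section~\ref{sec:auto} is sound in outline and is in fact the paper's other proof of this theorem (with $N=20$, the $20$ cusps of $\Gamma_1(20)$, and an order bound of $4$ at $i\infty$), but as written it too describes what one would do rather than verifying it; to count as a proof you would at least need to exhibit the identity in the $E_g$-notation, check the criterion \eqref{eq:Ecrit}, and state and use the resulting coefficient bound.
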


\section{More generating functions for sequences of squares}
\label{sec:Ids2}

In this section, we collect the additional formulae for generating
functions for sequences of squares that we found when we started
our work on this kind of identities. Also these become theorems
by the proofs in Sections~\ref{sec:mech} and~\ref{sec:Riemann},
respectively. 

\begin{theorem} 
\label{thm:120m+49}
	Let\/ $(a_n)_{n\ge0}$ be the sequence of non-negative integers $m$ such
	that $120m+49$ is a square. Then
	\begin{equation} \label{eq:1}
	\sum_{n=0}^\infty(-1)^{\fl{5n/4}}q^{a_n}=
	\frac {(q,q^4,q^5;q^5)_\infty} {(q^2,q^3;q^5)_\infty}.
	\end{equation}
\end{theorem}

\begin{theorem} 
\label{thm:120m+1}
	Let\/ $(a_n)_{n\ge0}$ be the sequence of non-negative integers $m$ such
	that $120m+1$ is a square. Then
	\begin{equation} \label{eq:2}
	\sum_{n=0}^\infty(-1)^{\fl{(n+2)/4}}q^{a_n}=
	\frac {(q^2,q^3,q^5;q^5)_\infty} {(q,q^4;q^5)_\infty} .
	\end{equation}
\end{theorem}

\begin{theorem} \label{thm:168m+121}
	Let\/ $(a_n)_{n\ge0}$ be the sequence of non-negative integers $m$ such
	that $168m+121$ is a square. Then
	\begin{equation} \label{eq:3}
	\sum_{n=0}^\infty(-1)^{\fl{5n/4}}q^{a_n}=
	\frac {(q,q^6,q^7;q^7)_\infty} {(q^3,q^4;q^7)_\infty}.
	\end{equation}
\end{theorem}

\begin{theorem} \label{thm:168m+1}
	Let\/ $(a_n)_{n\ge0}$ be the sequence of non-negative integers $m$ such
	that $168m+1$ is a square. Then
	\begin{equation} \label{eq:4}
	\sum_{n=0}^\infty(-1)^{\fl{(n+2)/4}}q^{a_n}=
	\frac {(q^2,q^5,q^7;q^7)_\infty} {(q,q^6;q^7)_\infty}.
	\end{equation}
\end{theorem}

\begin{theorem} \label{thm:168m+25}
	Let\/ $(a_n)_{n\ge0}$ be the sequence of non-negative integers $m$ such
	that $168m+25$ is a square. Then
	\begin{equation} \label{eq:5}
	\sum_{n=0}^\infty(-1)^{\fl{(n+2)/4}}q^{a_n}=
	\frac {(q^3,q^4,q^7;q^7)_\infty} {(q^2,q^5;q^7)_\infty}.
	\end{equation}
\end{theorem}

\begin{theorem} \label{thm:48m+1}
	Let\/ $(a_n)_{n\ge0}$ be the sequence of non-negative integers $m$ such
	that $48m+1$ is a square. Then
	\begin{equation} \label{eq:6}
	\sum_{n=0}^\infty(-1)^{\fl{(n+2)/4}}q^{a_n}=
	\frac {(q^2,q^6,q^8;q^8)_\infty} {(q,q^7;q^8)_\infty}.
	\end{equation}
\end{theorem}

\begin{theorem} 
\label{thm:48m+25}
	Let\/ $(a_n)_{n\ge0}$ be the sequence of non-negative integers $m$ such
	that $48m+25$ is a square. Then
	\begin{equation} \label{eq:7}
	\sum_{n=0}^\infty(-1)^{\fl{5n/4}}q^{a_n}=
	\frac {(q^2,q^6,q^8;q^8)_\infty} {(q^3,q^5;q^8)_\infty}.
	\end{equation}
\end{theorem}

\begin{theorem} 
\label{thm:21m+1}
	Let\/ $(a_n)_{n\ge0}$ be the sequence of non-negative integers $m$ such
	that $21m+1$ is a square. Then
	\begin{equation} \label{eq:8}
	\sum_{n=0}^\infty(-1)^{\fl{(n+2)/4}}q^{a_n}=
	\frac {(q,q^6,q^7;q^7)_\infty (q^5,q^9;q^{14})_\infty} {(q,q^3;q^4)_\infty}.
	\end{equation}
\end{theorem}

\begin{theorem} \label{thm:21m+4}
	Let\/ $(a_n)_{n\ge0}$ be the sequence of non-negative integers $m$ such
	that $21m+4$ is a square. Then
	\begin{equation} \label{eq:9}
	\sum_{n=0}^\infty(-1)^{\fl{(n+2)/4}}q^{a_n}=
	\frac {(q^2,q^5,q^7;q^7)_\infty (q^3,q^{11};q^{14})_\infty} {(q,q^3;q^4)_\infty}.
	\end{equation}
\end{theorem}

\begin{theorem} \label{thm:21m+16}
	Let\/ $(a_n)_{n\ge0}$ be the sequence of non-negative integers $m$ such
	that $21m+16$ is a square. Then
	\begin{equation} \label{eq:10}
	\sum_{n=0}^\infty(-1)^{\fl{5n/4}}q^{a_n}=
	\frac {(q^3,q^4,q^7;q^7)_\infty (q,q^{13};q^{14})_\infty} {(q,q^3;q^4)_\infty}.
	\end{equation}
\end{theorem}

\begin{theorem} \label{thm:16m+1}
	Let\/ $(a_n)_{n\ge0}$ be the sequence of non-negative integers $m$ such
	that $16m+1$ is a square. Then
	\begin{equation} \label{eq:11}
	\sum_{n=0}^\infty q^{a_n}=
	\frac {(q,q^7,q^8;q^8)_\infty (q^6,q^{10};q^{16})_\infty} {(q,q^3;q^4)_\infty}.
	\end{equation}
\end{theorem}

\begin{theorem} \label{thm:16m+9}
	Let\/ $(a_n)_{n\ge0}$ be the sequence of non-negative integers $m$ such
	that $16m+9$ is a square. Then
	\begin{equation} \label{eq:12}
	\sum_{n=0}^\infty q^{a_n}=
	\frac {(q^3,q^5,q^8;q^8)_\infty (q^2,q^{14};q^{16})_\infty} {(q,q^3;q^4)_\infty}.
	\end{equation}
\end{theorem}

\section{The Jacobi triple product identity and two of its
  consequences}
\label{sec:Jacobi}

The purpose of this section is, first of all, to state the Jacobi
triple product identity which is ubiquitously used in the proofs
of our theorems, and, second, to make two corollaries explicit
that we need in the proofs of four of our theorems.

\medskip
The Jacobi triple product identity says that
(cf.\ \cite[Eq.~(1.6.1)]{GaRaAA})
\begin{equation} \label{eq:JTP} 
\sum_{n=-\infty}^\infty (-1)^n q^{\binom n2}z^n
=
(q,z,q/z;q)_\infty.
\end{equation}
Letting $q\to q^2$ and setting $z=-q$ in \eqref{eq:JTP}, we obtain
$$
\sum_{n=-\infty}^\infty q^{n^2}
=
(q^2,-q,-q;q^2)_\infty.
$$
Since we have
$$
\sum_{n=1}^\infty q^{n^2}
=
\sum_{n=-\infty}^{-1} q^{n^2},
$$
the previous identity implies
\begin{equation} \label{eq:n^2} 
\sum_{n=1}^\infty q^{n^2}
=
\frac {1} {2}\big((q^2,-q,-q;q^2)_\infty-1\big).
\end{equation}
We shall need this identity in the
proofs of Theorems~\ref{thm:6.9} and \ref{thm:6.14}.

\medskip
On the other hand,
letting $q\to q^2$ and setting $z=-q^2$ in \eqref{eq:JTP}, we obtain
$$
\sum_{n=-\infty}^\infty q^{n^2+n}
=
2\,(q^2,-q^2,-q^2;q^2)_\infty.
$$
Since we have
$$
\sum_{n=0}^\infty q^{n^2+n}
=
\sum_{n=-\infty}^{-1} q^{n^2+n},
$$
the previous identity implies
\begin{equation} \label{eq:n(n+1)} 
\sum_{n=0}^\infty q^{n^2+n}
=
(q^2,-q^2,-q^2;q^2)_\infty.
\end{equation}
This identity will be used in the
proofs of Theorems~\ref{thm:6.8} and \ref{thm:6.11}.

\section{Background on modular functions}
\label{sec:mod}

In this section, we give a brief introduction to modular functions,
tailored to our purposes. Let
$\mathbb{H}:=\{x\in\mathbb{C}:\mbox{Im}(x)>0\}$ denote the upper half
plane. Roughly speaking, modular functions are (certain) 
meromorphic functions on
$\mathbb{H}$ that are invariant under the action of
a subgroup $\Gamma$ of $\SL_2(\mathbb{Z})$.
In our setting, $\Gamma=\Gamma_1(N)$ and $N>3$, where 
$$
\Gamma_1(N):=\left\{\begin{pmatrix} a&b\\c&d\end{pmatrix}\in
  \SL_2(\mathbb Z):a,d\equiv1~(\text{mod }N)\text{ and
  }c\equiv0~(\text{mod }N)\right\}.
$$

The crucial fact on which we base the ``methodology" explained in
the next section, is
the following proposition (cf.\ \cite[Prop.~4.12]{MiraAA}).
\begin{proposition}\label{mainprop}
  Let\/ $f$ be a non-constant meromorphic function on a compact Riemann
  surface $X$. Then 
  $$\sum_{p\in X}\ord_p(f)=0,$$
where $\ord_p(f)$ is the order of the Laurent series expansion of
$f$ about the point~$p$.
  \end{proposition}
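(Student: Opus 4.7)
The plan is to derive this from the residue theorem for compact Riemann surfaces applied to the logarithmic differential of $f$. Since $f$ is a non-constant meromorphic function on the compact surface $X$, its zero set and polar set are discrete, hence finite, so the sum $\sum_{p\in X}\ord_p(f)$ has only finitely many nonzero terms and is well defined. The key object is the meromorphic $1$-form $\omega := df/f$, which is defined and holomorphic away from the zeros and poles of $f$.

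Next I would carry out the local computation at an arbitrary point $p\in X$. Choose a local coordinate $z$ centred at $p$; then $f$ has a Laurent expansion $f(z) = z^{m}\bigl(a_0 + a_1 z + a_2 z^2 + \cdots\bigr)$ with $a_0\neq 0$ and $m = \ord_p(f)\in\mathbb{Z}$. Differentiating and dividing gives
\[
\omega \;=\; \frac{df}{f} \;=\; \Bigl(\frac{m}{z} + g(z)\Bigr)\,dz,
\]
where $g$ is holomorphic at $z=0$. Consequently $\omega$ has at worst a simple pole at $p$ with $\mathrm{Res}_p(\omega) = m = \ord_p(f)$; in particular the residue is $0$ wherever $f$ is holomorphic and nonzero, so only finitely many residues contribute.

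The conclusion would then follow immediately from the residue theorem on a compact Riemann surface, which asserts that for any meromorphic $1$-form $\omega$ on $X$ one has $\sum_{p\in X}\mathrm{Res}_p(\omega)=0$. Substituting $\omega = df/f$ produces exactly $\sum_{p\in X}\ord_p(f)=0$, as required.

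The main obstacle, strictly speaking, is the residue theorem itself, whose proof requires triangulating $X$, applying Stokes' theorem on the complement of small coordinate disks around each pole, and passing to the limit to identify the boundary integrals with residues. Since this is a textbook result (cf.\ \cite{MiraAA}) and the proposition here is invoked only as background, I would cite it rather than reprove it; alternatively, one may give a more elementary argument by observing that a non-constant meromorphic $f$ defines a holomorphic branched covering $f\colon X\to\mathbb{P}^1$ of some finite degree $d$, so that $f^{-1}(0)$ and $f^{-1}(\infty)$ each have cardinality $d$ counted with multiplicity, yielding the same conclusion.
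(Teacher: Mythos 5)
Your argument is correct, but note that the paper itself offers no proof of this proposition at all: it is quoted verbatim as background, with a citation to \cite[Prop.~4.12]{MiraAA}, so there is nothing in the text to compare against line by line. Your primary route --- computing that $\omega=df/f$ has a simple pole with $\mathrm{Res}_p(\omega)=\ord_p(f)$ at each zero or pole of $f$ and then invoking the residue theorem $\sum_{p}\mathrm{Res}_p(\omega)=0$ on a compact Riemann surface --- is a standard and complete proof, modulo the residue theorem which you reasonably cite rather than reprove. The local computation is right: with $f=z^m h(z)$, $h(0)\ne0$, one gets $df/f=(m/z+h'(z)/h(z))\,dz$ with $h'/h$ holomorphic. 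It is worth pointing out that the alternative you mention in passing --- viewing $f$ as a degree-$d$ holomorphic map $X\to\mathbb{P}^1$ and comparing the fibres over $0$ and $\infty$, each of which has $d$ points counted with multiplicity --- is in fact the argument the cited source uses at that point in the book (the residue theorem appears there only later), and it is arguably the more self-contained of the two since it needs only the local normal form of a holomorphic map and the constancy of the degree, not Stokes' theorem. Either way, the proposition is established.
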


For $\gamma=\left(\begin{smallmatrix} a & b \\ c &
  d \end{smallmatrix}\right)$ and $\tau\in\mathbb{H}$ we define
$\gamma\tau:=\frac{a\tau+b}{c\tau+d}$.
For a subgroup $\Gamma$ of $\SL_2(\mathbb Z)$, we let
$$A(\Gamma):=\{ \text{$f$ meromorphic on $\mathbb{H}$}:\text{
  $f(\gamma\tau)=f(\tau)$ for all $\gamma\in \Gamma$ and
  $\tau\in\mathbb{H}$}\}.$$ 


A function $f\in A(\Gamma_1(N))$ can be viewed as a meromorphic
function $\tilde{f}$ on the Riemann surface
$\mathbb{H}/\Gamma_1(N):=\{[\tau]:\tau\in\mathbb{H}\}$.
Here, $[\tau]:=\{\gamma\tau:\gamma\in\Gamma_1(N)\}$ denotes the orbit
of $\tau\in\mathbb{H}$ under the action of the group $\Gamma_1(N)$, and
$\tilde{f}([\tau]):=f(\tau)$. One equips $\mathbb{H}/\Gamma_1(N)$
with the quotient topology so that it is a topological space. To make
$\mathbb{H}/\Gamma_1(N)$ into a Riemann surface, we follow the recipe in
\cite[Sec.~1.8]{MiyaAA} (cf.\ also \cite{PaRaAA}). Then $\tilde{f}$ becomes
a meromorphic function on $\mathbb{H}/\Gamma_1(N)$. However, as we
shall explain in Section~\ref{sec:auto}, 
we want to use Proposition~\ref{mainprop}, which
is an assertion on meromorphic functions on a {\it compact} Riemann
surface. In other words,
we would need $\mathbb{H}/\Gamma_1(N)$ to
be compact, which it is not. So we need to add some points in order to
make it compact. In order to achieve this, we define
$\mathbb{H}^*:=\mathbb{H}\cup \mathbb{Q}\cup \{i\infty\}$. We extend
the action of 
$\gamma=\left(\begin{smallmatrix} a & b \\ c & d \end{smallmatrix}\right)$
to $\mathbb H^*$ as follows.
For $\frac{s}{t}\in\mathbb{Q}$, we let
$\gamma\frac{s}{t}:=\frac{a(s/t)+b}{c(s/t)+d}$ if $c(s/t)+d\neq 0$ and
$\gamma\frac{s}{t}:=i\infty$ otherwise. 
Moreover, we let $\gamma(i\infty) =\frac{a}{c}$ if
$c\neq 0$ and $\gamma(i\infty):=i\infty$ otherwise. Then
$\mathbb{H}^*/\Gamma_1(N):=\{[\tau]:\tau\in\mathbb{H}^*\}$ is a
compact topological space when equipped with the quotient
topology. More precisely, 
the topology of $\mathbb{H}^*$ is generated by the topology
of $\mathbb{H}$ (which inherits the standard topology in $\mathbb{C}$)
and the sets $U_M:=\{x\in\mathbb{H}:\mbox{Im}(x)>M\}\cup \{i\infty\}$,
and the sets  $\gamma U_M$ for all $\gamma\in
\SL_2(\mathbb{Z})$, where $M$ is a positive real number.   

In particular $X_1(N):=\mathbb{H}^*/\Gamma_1(N)$ is made into a
Riemann surface following the recipe given in \cite[Sec.~1.8]{MiyaAA}
(cf.\ also \cite{PaRaAA}). 
Again, this creates a problem:
for a function $f\in A(\Gamma_1(N)$, the corresponding 
function $\tilde{f}$ on the quotient space is not necessarily
meromorphic on $X_1(N)$. The problematic points are the cusps
$\{[s/t]:s/t\in \mathbb{Q}\cup \{i\infty\}\}$. Given a 
function $f\in A(\Gamma_1(N))$, in order for $\tilde{f}$
to be meromorphic on $X_1(N)$, we need that,
for each reduced fraction 
$\frac{a}{c}\in\mathbb{Q}$,  $f$ can be expressed as a
Laurent series with finite principal part in powers of $e^{\frac{2\pi
    i\gamma^{-1}\tau}{h_{c}}}$. Here, $h_{c}:=N/\gcd(c,N)$ is called
the width of the cusps $a/c$.
When these additional conditions are satisfied we say that
$f$ is a modular function for the group $\Gamma_1(N)$, and we write
$f\in M(\Gamma_1(N))$. In particular, $f\in M(\Gamma_1(N))$ implies
that $\tilde{f}$ is meromorphic on $X_1(N)$. 
Furthermore,
$\ord_{[a/c]}\tilde{f}$ equals the order of the Laurent series of
$f$ in powers of $e^{\frac{2\pi i\gamma^{-1}\tau}{h_{c}}}$. 
For an arbitrary function $f\in M(\Gamma_1(N))$, the order of
$\tilde{f}$ at a point $[\tau_0]$ for $\tau_0\in\mathbb{H}$ is $t$,
where $t$ is the order of $f$ when expanded in powers of
$\tau-\tau_0$.

\section{How to ``mechanically" prove theta function identities}
\label{sec:auto}

Here we outline the proof strategy for identities equating sums
of theta products that we are going to apply in the proofs of
our theorems from Sections~\ref{sec:Ids1} and~\ref{sec:Ids2}
given in the next section.

\medskip
Let us fix a positive modulus $N>3$.
For $g\in\{0,\dots,N-1\}$ define
\begin{equation} \label{eq:E} 
E_g=E_g(q;N):=q^{NB_2(g/N)/2}(q^g,q^{N-g};q^N)_\infty,
\end{equation}
where $B_2(x)=x^2-x+\frac {1} {6}$ is the second Bernoulli polynomial.
The function $E_g$ is essentially a (specialised) Jacobi theta
function, ``essentially" referring to the missing factor
$(q^N;q^N)_\infty$.
In the following we explain how identities of the form
\begin{equation} \label{eq:Eid} 
\sum_{j=1}^rc_j\prod_{g}E_g^{a_g^{(j)}}=0,
\end{equation}
where the $c_j$'s are complex numbers and the $a_g^{(j)}$'s are
integers, can be routinely verified if each summand in the sum above
is a modular function for the group $\Gamma_1(N)$.
It should be noted that the
left-hand side of \eqref{eq:Eid} is a linear combination of theta
products. 

Now, to verify the identity \eqref{eq:Eid}, the following steps have
to be performed. For convenience, in the following we write LHS for
the left-hand side of \eqref{eq:Eid}.

\medskip
{\sc Step 1.}
According to \cite[Prop.~3]{YY}, $\prod_{g}E_g^{a_g}(q;N)$ is a modular
function in $\tau$ for the group $\Gamma_1(N)$, where $q=e^{2\pi i\tau}$,
that is, $\prod_{g}E_g^{a_g}$ is an element of
$M(\Gamma_1(N))$ if 
\begin{equation} \label{eq:Ecrit} 
\sum_{g}a_g\equiv 0\pmod{12} \quad \text{ and } \quad 
\sum_g g^2a_g\equiv 0 \pmod{y(N)},
\end{equation}
where $y(N)=2N$ if $N$ is even, and $y(N)=N$ if $N$ is odd.
We use this criterion in LHS
for each summand in order to check that each summand is a modular
function for $\Gamma_1(N)$.

\medskip
{\sc Step 2.} Representatives of the cusps 
for the group $\Gamma_1(N)$ are computed. (There are only finitely many.)
The computer algebra programme {\sl Magma} provides an implementation
in form of the function \texttt{Cusps(Gamma1($N$))}. 

\medskip
{\sc Step 3.} For each (representative of a) cusp --- except~$i\infty$, 
$c$ say, and $j=1,2,\dots,r$, the order
of the function $\prod_{g}E_g^{a_g^{(j)}}$ at~$c$ has to be computed.
According to \cite[Prop.~4]{YY}, the order $\ord(E_g;c,N)$ of the
function $E_g$ at the cusp~$c$ of the group $\Gamma_1(N)$ is given
by 
\begin{equation} \label{eq:ord} 
\ord(E_g;c,N)=\frac {1} {2}\gcd(D_c,N)B_2(\{N_cg/\gcd(D_c,N)\}),
\end{equation}
where $D_c$ is the denominator of~$c$ and $N_c$ is the numerator
of~$c$, while $\{\alpha\}$ denotes the fractional part of the rational
number~$\alpha$.
The order of $\prod_{g}E_g^{a_g^{(j)}}$ at~$c$ then is
$$
\sum_g a_g^{(j)}\ord(E_g;c,N).
$$

\medskip
{\sc Step 4.} We obtain a lower bound on the order of LHS at $c$
by taking the minimum of the orders of the
individual summands of the sum in LHS.

\medskip
{\sc Step 5.} By Proposition~\ref{mainprop}, if $LHS$ is not
identically zero, then
the sum of all the orders equals zero. Hence, (again assuming
that $LHS$ is not identically zero)
for the function
$\widetilde{LHS}$ on the compact Riemann surface $X_1(N)$, we have
$$
0=\ord_{[i\infty]}(\widetilde{LHS})
+\sum_{\text{$[c]$, $c$ a cusp, $[c]\ne[i\infty]$}}\ord_{[c]}(\widetilde{LHS})
+\sum_{\text{$[p]$, $p$ not a cusp}}\ord_{[p]}(\widetilde{LHS}).
$$
If we sum all the lower bounds that we found in Step~4 over all the
cusps different from~$i\infty$, then we obtain a lower bound, $-U$ say, on the
first sum in the above expression. Furthermore, from the definition
of the function $E_g$ it is obvious that it cannot have a singularity
at a point $p\in\mathbb H$, and thus the order of $E_g$ at
$p$ is non-negative. This implies directly
that the orders $\ord_{[p]}(\widetilde{LHS})$ are non-negative, yielding
the lower bound $0$ on the second sum. Everything combined, we see
that $\ord_{[i\infty]}(\widetilde{LHS})\le U$.

\medskip
{\sc Step~6}. We now verify by direct computation that LHS
has the power series expansion
$0+0q+\dots+0q^{U}+\cdots$. This says that
$\ord_{[i\infty]}(\widetilde{LHS})>U$ (the reader should recall that,
under the relation $q=e^{2\pi i\tau}$, the point $\tau=i\infty$
corresponds to $q=0$), a contradiction to our
finding in Step~5 under the assumption that $LHS$ is not
identically zero.
Consequently, LHS must be the zero function. 

\section{``Mechanical" proofs}
\label{sec:mech}

This section is devoted to the presentation of the proofs of the
theorems in Sections~\ref{sec:Ids1} and~\ref{sec:Ids2} that are
based on the procedure outlined in the previous section.
We provide full details for the proof of Theorem~\ref{thm:840m+361},
while we remain brief for the proofs of the other theorems, all
of them being completely analogous. For the theorems which are
specialisations of the parametric theorems in Section~\ref{sec:Ids3},
we refer to the proofs given there.

\begin{proof}[Proof of Theorem \ref{thm:840m+361}]

{\sc Step 0.} We write the sum on the left-hand side
of \eqref{eq:Id5.1} explicitly, and then apply the Jacobi triple
product identity to obtain an expression that is a linear combination
of products of theta functions.

In order to accomplish this, we first observe 
that squares that are congruent to $361$
modulo $840$ are of the form $S^2$, where
$S\equiv19,61,79,89,121,131,149,191$~$(\text{mod}~210)$.
Consequently, taking the definition of $t(n)$ into account, we have
\begin{multline} \label{eq:Id5.1A}
\sum_{n=0}^\infty(-1)^{t(n)}q^{a_n}=
\sum_{k=0}^\infty (-1)^kq^{\frac {1} {840}((210k+19)^2-361)}
+\sum_{k=0}^\infty (-1)^kq^{\frac {1} {840}((210k+61)^2-361)}\\
-\sum_{k=0}^\infty (-1)^kq^{\frac {1} {840}((210k+79)^2-361)}
+\sum_{k=0}^\infty (-1)^kq^{\frac {1} {840}((210k+89)^2-361)}\\
-\sum_{k=0}^\infty (-1)^kq^{\frac {1} {840}((210k+121)^2-361)}
+\sum_{k=0}^\infty (-1)^kq^{\frac {1} {840}((210k+131)^2-361)}\\
-\sum_{k=0}^\infty (-1)^kq^{\frac {1} {840}((210k+149)^2-361)}
-\sum_{k=0}^\infty (-1)^kq^{\frac {1} {840}((210k+191)^2-361)}\\
=
\sum_{k=0}^\infty (-1)^kq^{\frac{105 k^2}{2}+\frac{19 k}{2}}
+\sum_{k=0}^\infty (-1)^kq^{\frac{105 k^2}{2}+\frac{61 k}{2}+4}
-\sum_{k=0}^\infty (-1)^kq^{\frac{105 k^2}{2}+\frac{79 k}{2}+7}
\kern2cm\\
+\sum_{k=0}^\infty (-1)^kq^{\frac{105 k^2}{2}+\frac{89 k}{2}+9}
-\sum_{k=0}^\infty (-1)^kq^{\frac{105 k^2}{2}+\frac{121 k}{2}+17}
+\sum_{k=0}^\infty (-1)^kq^{\frac{105 k^2}{2}+\frac{131 k}{2}+20}\\
-\sum_{k=0}^\infty (-1)^kq^{\frac{105 k^2}{2}+\frac{149 k}{2}+26}
-\sum_{k=0}^\infty (-1)^kq^{\frac{105 k^2}{2}+\frac{191 k}{2}+43}.
\end{multline}
By performing the replacement $k\to -k-1$, we see that
the last sum on the right-hand side of \eqref{eq:Id5.1A}
can be rewritten as
$$
\sum_{k=0}^\infty (-1)^kq^{\frac{105 k^2}{2}+\frac{191 k}{2}+43}
=
\sum_{k=-\infty}^{-1} (-1)^{k+1}
q^{\frac{105 k^2}{2}+\frac{19 k}{2}}.
$$
Thus, it can be combined with the first sum on the right-hand side of
\eqref{eq:Id5.1A}.
This is similar for the other sums. As a result, they can be paired
so that one obtains four sums over {\it all\/} integers~$k$:
\begin{multline*} 
\sum_{n=0}^\infty(-1)^{t(n)}q^{a_n}=
\sum_{k=-\infty}^\infty (-1)^kq^{\frac{105 k^2}{2}+\frac{19 k}{2}}
+\sum_{k=-\infty}^\infty (-1)^kq^{\frac{105 k^2}{2}+\frac{61 k}{2}+4}\\
-\sum_{k=-\infty}^\infty (-1)^kq^{\frac{105 k^2}{2}+\frac{79 k}{2}+7}
+\sum_{k=-\infty}^\infty (-1)^kq^{\frac{105 k^2}{2}+\frac{89 k}{2}+9}.
\end{multline*}
Now, as announced, 
to each of these sums we apply the Jacobi triple product identity
\eqref{eq:JTP} to get
\begin{multline} \label{eq:840m361}
\sum_{n=0}^\infty(-1)^{t(n)}q^{a_n}=
(q^{105},q^{62},q^{43};q^{105})_\infty
+q^4\,(q^{105},q^{83},q^{22};q^{105})_\infty\\
-q^7\,(q^{105},q^{92},q^{13};q^{105})_\infty
+q^9\,(q^{105},q^{97},q^8;q^{105})_\infty.
\end{multline}
Thus, we have to prove the identity
\begin{multline*} 
0=
(q^{105},q^{62},q^{43};q^{105})_\infty
+q^4\,(q^{105},q^{83},q^{22};q^{105})_\infty\\
-q^7\,(q^{105},q^{92},q^{13};q^{105})_\infty
+q^9\,(q^{105},q^{97},q^8;q^{105})_\infty
-\frac {(q,q^6,q^7;q^7)_\infty} {(q,q^4;q^5)_\infty}.
\end{multline*}
We divide both sides of the identity by the first term on the right-hand side 
and obtain
\begin{multline*} 
0=
1
+q^4\frac{(q^{105},q^{83},q^{22};q^{105})_\infty}
{(q^{105},q^{62},q^{43};q^{105})_\infty}
-q^7\frac{(q^{105},q^{92},q^{13};q^{105})_\infty}
{(q^{105},q^{62},q^{43};q^{105})_\infty}\\
+q^9\frac{(q^{105},q^{97},q^8;q^{105})_\infty}
{(q^{105},q^{62},q^{43};q^{105})_\infty}
-\frac {(q,q^6,q^7;q^7)_\infty} {(q,q^4;q^5)_\infty\,
(q^{105},q^{62},q^{43};q^{105})_\infty}.
\end{multline*}
Now we fix $N:=105$.
With the notation \eqref{eq:E},
our identity can be written as
\begin{equation} \label{eq:0=...} 
0=1+\frac{E_{22}}{E_{43}}-\frac{E_{13}}{E_{43}}
+\frac{E_8}{E_{43}}-\frac{E_7E_8E_{13}E_{15}E_{20}E_{22}
E_{27}E_{28}E_{35}E_{42}E_{48}E_{50}}{E_4E_9E_{11}E_{16}
E_{19}E_{24}E_{26}E_{31}E_{39}E_{44}E_{46}E_{51}}.
\end{equation}
In order to rewrite the last term we used that
$$
(q^7;q^7)=(q^7,q^{14},q^{21},\dots,q^{105};q^{105}),
$$
and similar ``blow-ups" for other terms.

\medskip
{\sc Step 1.} We use the criterion \eqref{eq:Ecrit} to see that all
summands on the right-hand side of \eqref{eq:0=...}
are modular functions for $\Gamma_1(105)$. Indeed, 
for $N=105$ and $\frac{E_{22}}{E_{43}}$, we have 
$1-1\equiv 0\pmod{12}$ and $22^2-43^2\equiv 0\pmod{105}$. 
This shows that $\frac{E_{22}}{E_{43}}$ is a modular function for the
group $\Gamma_1(105)$. 
Similarly, we observe that $\frac{E_{13}}{E_{43}}$, $\frac{E_8}{E_{43}}$ 
and
$\frac{E_7E_8E_{13}E_{15}E_{20}E_{22}E_{27}E_{28}E_{35}E_{42}E_{48}
E_{50}}{E_4E_9E_{11}E_{16}E_{19}E_{24}E_{26}E_{31}E_{39}E_{44}E_{46}E_{51}}$
are modular functions for the group $\Gamma_1(105)$. Consequently, 
$$f:=1+\frac{E_{22}}{E_{43}}-\frac{E_{13}}{E_{43}}+\frac{E_8}{E_{43}}
-\frac{E_7E_8E_{13}E_{15}E_{20}E_{22}E_{27}E_{28}E_{35}E_{42}E_{48}E_{50}}
{E_4E_9E_{11}E_{16}E_{19}E_{24}E_{26}E_{31}E_{39}E_{44}E_{46}E_{51}}$$
is a modular function for the group $\Gamma_1(105)$.

\medskip
{\sc Step 2.}
We use the computer algebra {\sl Magma} to compute
representatives of the cusps for the group $\Gamma_1(105)$. This is
done by using the command \texttt{Cusps(Gamma1(105))}. 
The output is
\begin{verbatim}
[oo,0,1/13,1/12,2/23,1/11,3/32,2/21,1/10,3/29,5/48,2/19,3/28,4/37,1/9,\
5/44,4/35,3/26,8/69,5/43,2/17,3/25,4/33,1/8,6/47,5/39,9/70,4/31,11/84,\
13/99,5/38,7/53,2/15,13/96,8/59,3/22,7/51,1/7,5/34,4/27,29/195,18/121,\
3/20,48/319,79/525,5/33,16/105,7/45,18/115,8/51,4/25,17/105,1/6,6/35,\
11/63,18/103,7/40,8/45,23/129,5/28,7/39,9/50,11/60,9/49,12/65,5/27,\
19/102,30/161,13/69,17/90,4/21,1/5,109/525,27/130,19/91,23/110,22/105,\
47/222,18/85,33/155,3/14,14/65,68/315,13/60,41/189,64/295,29/133,\
19/87,26/119,23/105,9/41,20/91,11/50,11/49,9/40,71/315,23/102,30/133,\
17/75,27/119,8/35,13/56,44/189,7/30,13/55,5/21,6/25,9/35,11/42,59/225,\
37/140,23/87,53/200,13/49,4/15,62/231,51/190,47/175,32/119,368/1365,\
17/63,13/48,29/105,31/112,46/165,41/147,59/210,69/245,2/7,13/45,\
71/245,7/24,92/315,45/154,43/147,31/105,34/115,29/98,52/175,\
25/84,94/315,19/63,32/105,13/42,24/77,11/35,16/45,113/315,48/133,\
38/105,23/63,11/30,31/84,13/35,37/98,8/21,67/175,523/1365,29/75,\
64/165,41/105,124/315,2/5,43/105,41/100,26/63,31/75,44/105,103/245,\
47/105,16/35,7/15,8/15,19/35,39/70,137/245,47/84,17/30,4/7,97/168,\
41/70,37/63,13/21,152/245,87/140,22/35,19/30,24/35,46/63,11/15,\
23/30,27/35]}.
\end{verbatim}

\medskip
{\sc Step 3.} 
We now compute the order of each summand of LHS at each cusp except~$i\infty$.
To do this, we have to use \eqref{eq:ord}.
Assume for example that we want to compute the order at the cusp
$27/35$ of the function $\frac{E_{22}}{E_{43}}$. It is
convenient to implement the following two functions in {\sl Maple}:

\begin{verbatim}
B:=proc(x) 
x^2-x+1/6 
end proc:
\end{verbatim}

\begin{verbatim}
orderCusp1:=proc(x,N,g)
local a,c;
a:=numer(x);
c:=denom(x);
igcd(c,N)*B(frac(a*g/igcd(c,N)))/2
end proc:
\end{verbatim}
Now, in order to achieve our task we enter in {\sl Maple}:
\begin{verbatim}
orderCusp1(27/35,105,22)-orderCusp1(27/35,105,43);
2
\end{verbatim}
The output means that $\frac{E_{22}}{E_{43}}$ has a zero of order $2$
at the cusp $\frac{27}{35}$.  

\medskip
{\sc Step 4.}
We want to compute a lower bound on the order of $f$
at the cusp $27/35$. In order to do this, we do the above computation
for each term in $f$ and take the minimum of all orders. This
computation can be simplified by the function 
\begin{verbatim}
  orderCuspGroup:=proc(x,N,g)
  local ii,order;
  order:=0;
  for ii from 1 to nops(g) do
  order:=order+orderCusp1(x,N,g[ii][1])*g[ii][2];
  od;
  order
  end proc:    
\end{verbatim}
This function takes as input the cusp representative $x$, the $N$ ---
which in our case is $105$ ---, and the index $g$ from $E_g$. The product
$\frac{E_{22}}{E_{43}}$ is expressed as 
\begin{verbatim}
f1:=[[22,1],[43,-1]]; 
\end{verbatim}
Then we can compute the order of $\frac{E_{22}}{E_{43}}$ at the cusp
$27/35$ by typing
\begin{verbatim}
orderCuspGroup(27/35,105,f1);
2
\end{verbatim}
We define the other terms by
\begin{verbatim}
f2:=[[8,1],[43,-1]];
f3:=[[13,1],[43,-1]];
f4:=[[7,1],[8,1],[13,1],[15,1],[20,1],[22,1],[27,1],[28,1],
[35,1],[42,1],[48,1],[50,1],[4,-1],[9,-1],[11,-1],[16,-1],
[19,-1],[24,-1],[26,-1],[31,-1],[39,-1],[44,-1],[46,-1],[51,-1]];    
\end{verbatim}
Now we can get a lower bound on the order of $f$ at the cusp $27/35$
by writing 
\begin{verbatim}
 min(orderCuspGroup(27/35,105,f1),orderCuspGroup(27/35,105,f2),
 orderCuspGroup(27/35,105,f3),orderCuspGroup(27/35,105,f4));
 0
\end{verbatim}
Hence, our lower bound is $0$. 

\medskip
{\sc Step 5.}
We sum up the lower bounds on the orders at all cusps except~$i\infty$.
Thus, we obtain an upper bound on the order of $\tilde f$ at~$[i\infty]$.
This is done as follows:
\begin{verbatim}
cusps:=[0,1/13,1/12,2/23,1/11,3/32,2/21,1/10,3/29,5/48,...];
cnt:=0;
for ii in cusps do 
 mn:=min(orderCuspGroup(ii,105,f1),orderCuspGroup(ii,105,f2),
 orderCuspGroup(ii,105,f3),orderCuspGroup(ii,105,f4)); cnt:=cnt+mn; od; 
print(cnt);
-148
\end{verbatim}
This means that the order of $\tilde f$ at~$i\infty$ is at most $148$,
under the assumption that $f$ is not identically zero. 

\medskip
{\sc Step 6.}
It is
routine to verify that $f=0+0q+\dots+0q^{148}+\cdots$. This implies 
that $\ord_{[i\infty]}(\tilde f)>148$. Hence,
$f$ must be the zero function. 
\end{proof}

\begin{proof}[Proof of Theorem \ref{thm:840m+529}]
It is elementary to see that squares that are congruent to $529$
modulo $840$ are of the form $S^2$, where
$S\equiv23,37,47,103,107,163,173,187$~$(\text{mod}~210)$.
If we now do a computation analogous to the one in the proof of
Theorem~\ref{thm:840m+361}, we obtain
\begin{multline} \label{eq:840m+529} 
\sum_{n=0}^\infty(-1)^{t(n)}q^{a_n}=
(q^{105},q^{64},q^{41};q^{105})_\infty
-q\,(q^{105},q^{71},q^{34};q^{105})_\infty\\
+q^2\,(q^{105},q^{76},q^{29};q^{105})_\infty
+q^{12}\,(q^{105},q^{104},q;q^{105})_\infty.
\end{multline}
Continuing as in the previous proof, we observe 
that the assertion of the theorem is equivalent to
$$0=1-\frac{E_{34}}{E_{41}}+\frac{E_{29}}{E_{41}}+\frac{E_1}{E_{41}}-\frac{E_1E_6E_{15}E_{20}E_{29}E_{34}E_{36}E_{50}E_{14}E_{21}E_{35}E_{49}}{E_2E_3E_{12}E_{17}E_{18}E_{23}E_{32}E_{33}E_{37}E_{38}E_{47}E_{52}},$$
where we used the notation \eqref{eq:E} with $N=105$.
Using \eqref{eq:Ecrit},
we see that the right-hand side is a modular function for the group
$\Gamma_1(105)$. We estimate the sum of the orders of the right-hand
side at the cusps using the same programmes as in the previous proof 
with modifications of the parameters. The
upper bound on the order of the right-hand side at~$i\infty$ is also
$148$, so it suffices to check that the first $148$ coefficients of
the right-hand side are zero in order to prove the identity, which we
checked using a computer.  
\end{proof}

\begin{proof}[Proof of Theorem \ref{thm:840m+121}]
It is elementary to see that squares that are congruent to $121$
modulo $840$ are of the form $S^2$, where
$S\equiv11,31,59,101,109,151,179,199$~$(\text{mod}~210)$.
If we now do a computation analogous to the one in the proof of
Theorem~\ref{thm:840m+361}, we obtain
\begin{multline} \label{eq:840m+121} 
\sum_{n=0}^\infty(-1)^{\fl{(n+4)/8}}q^{a_n}=
(q^{105},q^{47},q^{58};q^{105})_\infty
+q\,(q^{105},q^{37},q^{68};q^{105})_\infty\\
+q^4\,(q^{105},q^{23},q^{82};q^{105})_\infty
+q^{12}\,(q^{105},q^{2},q^{103};q^{105})_\infty.
\end{multline}
Next, we observe that, using the notation \eqref{eq:E} with $N=105$, 
the assertion of the theorem is equivalent to
$$0=1+\frac{E_{37}}{E_{47}}+\frac{E_{23}}{E_{47}}+\frac{E_2}{E_{47}}
-\frac{E_2E_5E_{7}E_{12}E_{23}E_{28}E_{30}E_{33}E_{35}E_{37}E_{40}E_{42}}
{E_1E_4E_{6}E_{11}E_{24}E_{29}E_{31}E_{34}E_{36}E_{39}E_{41}E_{46}}.$$
Using \eqref{eq:Ecrit},
we see that the right-hand side is a modular function for the group
$\Gamma_1(105)$. We estimate the sum of the orders of the right-hand
side at the cusps using the same programmes as before 
with modifications of the parameters. The
upper bound on the order of the right-hand side at~$i\infty$ is also
$148$, so it suffices to check that the first $148$ coefficients of
the right-hand side are zero in order to prove the identity, which we
checked using a computer. 
\end{proof}

\begin{proof}[Proof of Theorem \ref{thm:840m+289}]
It is elementary to see that squares that are congruent to $289$
modulo $840$ are of the form $S^2$, where
$S\equiv17,53,67,73,137,143,157,193$~$(\text{mod}~210)$.
If we now do a computation analogous to the one in the proof of
Theorem~\ref{thm:840m+361}, we obtain
\begin{multline} \label{eq:840m+289} 
\sum_{n=0}^\infty(-1)^{t(n)}q^{a_n}=
(q^{105},q^{44},q^{61};q^{105})_\infty
+q^3\,(q^{105},q^{26},q^{79};q^{105})_\infty\\
-q^5\,(q^{105},q^{19},q^{86};q^{105})_\infty
+q^{6}\,(q^{105},q^{16},q^{89};q^{105})_\infty.
\end{multline}
Next, we observe that, using the notation \eqref{eq:E} with $N=105$, 
the assertion of the theorem is equivalent to
$$0=1+\frac{E_{26}}{E_{44}}-\frac{E_{19}}{E_{44}}+\frac{E_{16}}{E_{44}}-
\frac{E_5E_9E_{14}E_{16}E_{19}E_{21}E_{26}E_{30}E_{35}E_{40}E_{49}E_{51}}
{E_3E_8E_{13}E_{17}E_{18}E_{22}E_{27}E_{32}E_{38}E_{43}E_{48}E_{52}}.$$
Using \eqref{eq:Ecrit},
we see that the right-hand side is a modular function for the group
$\Gamma_1(105)$. We estimate the sum of the orders of the right-hand
side at the cusps using the same programmes as before 
with modifications of the parameters. The
upper bound on the order of the right-hand side at~$i\infty$ is also
$148$, so it suffices to check that the first $148$ coefficients of
the right-hand side are zero in order to prove the identity, which we
checked using a computer. 
\end{proof}

\begin{proof}[Proof of Theorem \ref{thm:840m+1}]
It is elementary to see that squares that are congruent to $1$
modulo $840$ are of the form $S^2$, where
$S\equiv1,29,41,71,139,169,181,209$~$(\text{mod}~210)$.
If we now do a computation analogous to the one in the proof of
Theorem~\ref{thm:840m+361}, we obtain
\begin{multline} \label{eq:840m+1}
\sum_{n=0}^\infty(-1)^{\fl{(n+4)/8}}q^{a_n}=
(q^{105},q^{52},q^{53};q^{105})_\infty
+q\,(q^{105},q^{38},q^{67};q^{105})_\infty\\
+q^2\,(q^{105},q^{32},q^{73};q^{105})_\infty
+q^{6}\,(q^{105},q^{17},q^{88};q^{105})_\infty.
\end{multline}
Next, we observe that, using the notation \eqref{eq:E} with $N=105$, 
the assertion of the theorem is equivalent to
$$0=1+\frac{E_{38}}{E_{52}}+\frac{E_{32}}{E_{52}}+\frac{E_{17}}{E_{52}}
-\frac{E_3E_7E_{10}E_{17}E_{18}E_{25}E_{28}E_{32}E_{35}E_{38}E_{42}E_{45}}
{E_1E_6E_{9}E_{16}E_{19}E_{26}E_{29}E_{34}E_{36}E_{41}E_{44}E_{51}}.$$
Using \eqref{eq:Ecrit},
we see that the right-hand side is a modular function for the group
$\Gamma_1(105)$. We estimate the sum of the orders of the right-hand
side at the cusps using the same programmes as before 
with modifications of the parameters. The
upper bound on the order of the right-hand side at~$i\infty$ is also
$148$, so it suffices to check that the first $148$ coefficients of
the right-hand side are zero in order to prove the identity, which we
checked using a computer.
\end{proof}

\begin{proof}[Proof of Theorem \ref{thm:840m+169}]
It is elementary to see that squares that are congruent to $169$
modulo $840$ are of the form $S^2$, where
$S\equiv13,43,83,97,113,127,167,197$~$(\text{mod}~210)$.
If we now do a computation analogous to the one in the proof of
Theorem~\ref{thm:840m+361}, we obtain
\begin{multline} \label{eq:840m+169}
\sum_{n=0}^\infty(-1)^{t(n)}q^{a_n}=
(q^{105},q^{46},q^{59};q^{105})_\infty
+q^2\,(q^{105},q^{31},q^{74};q^{105})_\infty\\
+q^8\,(q^{105},q^{11},q^{94};q^{105})_\infty
-q^{11}\,(q^{105},q^{4},q^{101};q^{105})_\infty.
\end{multline}
Next, we observe that, using the notation \eqref{eq:E} with $N=105$, 
the assertion of the theorem is equivalent to
$$0=1+\frac{E_{31}}{E_{46}}+\frac{E_{11}}{E_{46}}-\frac{E_{4}}{E_{46}}
-\frac{E_4E_{10}E_{11}E_{14}E_{21}E_{24}E_{25}E_{31}E_{35}E_{39}E_{45}E_{49}}
{E_2E_8E_{12}E_{13}E_{22}E_{23}E_{27}E_{33}E_{37}E_{43}E_{47}E_{48}}.$$
Using \eqref{eq:Ecrit},
we see that the right-hand side is a modular function for the group
$\Gamma_1(105)$. We estimate the sum of the orders of the right-hand
side at the cusps using the same programmes as before 
with modifications of the parameters. The
upper bound on the order of the right-hand side at~$i\infty$ is also
$148$, so it suffices to check that the first $148$ coefficients of
the right-hand side are zero in order to prove the identity, which we
checked using a computer.
\end{proof}

\begin{proof}[Proof of Theorem \ref{thm:240m+1}]
It is elementary to see that squares that are congruent to $1$
modulo $240$ are of the form $S^2$, where
$S\equiv1,31,41,49,71,79,89,119$~$(\text{mod}~120)$.
If we now do a computation analogous to the one in the proof of
Theorem~\ref{thm:840m+361}, we obtain
\begin{multline} \label{eq:240m+1}
\sum_{n=0}^\infty(-1)^{\fl{(n+2)/4}}q^{a_n}=
(q^{120},-q^{59},-q^{61};q^{120})_\infty
+q^4\,(q^{120},-q^{29},-q^{91};q^{120})_\infty\\
-q^7\,(q^{120},-q^{19},-q^{101};q^{120})_\infty
-q^{10}\,(q^{120},-q^{11},-q^{109};q^{120})_\infty.
\end{multline}
Rewriting this expression,
we see that the assertion of the theorem is equivalent to
\begin{multline*}
  0=\frac{(q^{120};q^{120})_{\infty}(q^{118},q^{122};q^{240})_{\infty}}{(q^{59},q^{61};q^{120})_{\infty}}+q^4\frac{(q^{120};q^{120})_{\infty}(q^{58},q^{182};q^{240})_{\infty}}{(q^{29},q^{91};q^{120})_{\infty}}\\-q^7\frac{(q^{120};q^{120})_{\infty}(q^{22},q^{218};q^{240})_{\infty}}{(q^{11},q^{109};q^{120})_{\infty}}-q^{10}\frac{(q^{120};q^{120})_{\infty}(q^{22},q^{218};q^{240})_{\infty}}{(q^{11},q^{109};q^{120})_{\infty}}\\-\frac{(q,q^7,q^8;q^8)_{\infty}(q^6,q^{10};q^{16})_{\infty}}{(q,q^4;q^{5})_{\infty}}.
  \end{multline*}
By dividing both sides of the 
identity by the first term on the right-hand side and using the
notation \eqref{eq:E} with $N=240$, we obtain
\begin{multline*}
  0=1+\frac{E_{58}E_{59}E_{61}}{E_{29}E_{91}E_{118}}-\frac{E_{38}E_{59}E_{61}}{E_{19}E_{101}E_{118}}-\frac{E_{22}E_{59}E_{61}}{E_{11}E_{109}E_{118}}\\
    -\frac{E_7E_8E_{10}E_{15}E_{17}E_{22}E_{23}E_{25}E_{32}E_{33}E_{38}E_{40}E_{42}E_{47}E_{48}E_{55}E_{57}E_{58}}{E_4E_{11}E_{14}E_{19}E_{21}E_{29}E_{34}E_{36}E_{44}E_{46}E_{51}}\\
\times
\frac {E_{63}E_{65}E_{70}E_{72}E_{73}E_{80}E_{87}E_{88}E_{90}E_{95}E_{97}E_{102}E_{103}E_{105}E_{112}E_{113}} {E_{66}E_{69}E_{76}E_{84}E_{91}E_{94}E_{99}E_{101}E_{109}E_{114}E_{116}}.
\end{multline*}
Each term on the right-hand side is a modular function for the group
$\Gamma_1(240)$. As before, using {\sl Magma} we compute a list of all the
cusps. Here we have $448$ cusps. Again, we can give an upper bound
on the order of the right-hand side at~$i\infty$. Running our programme,
we obtain $592$.
We need to verify that the right-hand side has the form
$0+0q+\dots+0q^{592}+\cdots$, which can be routinely done. 
This proves the identity.  
\end{proof}

\begin{proof}[Proof of Theorem \ref{thm:240m+49}]
It is elementary to see that squares that are congruent to $49$
modulo $240$ are of the form $S^2$, where
$S\equiv7,17,23,47,73,97,103,113$~$(\text{mod}~120)$.
If we now do a computation analogous to the one in the proof of
Theorem~\ref{thm:840m+361}, we obtain
\begin{multline} \label{eq:240m+49}
\sum_{n=0}^\infty(-1)^{\fl{5n/4}}q^{a_n}=
(q^{120},-q^{53},-q^{67};q^{120})_\infty
-q\,(q^{120},-q^{43},-q^{77};q^{120})_\infty\\
+q^2\,(q^{120},-q^{37},-q^{83};q^{120})_\infty
-q^{9}\,(q^{120},-q^{13},-q^{107};q^{120})_\infty\\
=\frac{(q^{120};q^{120})_{\infty}(q^{106},q^{134};q^{240})_{\infty}}{(q^{53},q^{67};q^{120})_{\infty}}-q\frac{(q^{120};q^{120})_{\infty}(q^{86},q^{154};q^{240})_{\infty}}{(q^{43},q^{77};q^{120})_{\infty}}\\
  +q^2\frac{(q^{120};q^{120})_{\infty}(q^{74},q^{166};q^{240})_{\infty}}{(q^{37},q^{83};q^{120})_{\infty}}-q^9\frac{(q^{120};q^{120})_{\infty}(q^{26},q^{214};q^{240})_{\infty}}{(q^{13},q^{107};q^{120})_{\infty}}.
\end{multline}
Next, we observe that, using the notation \eqref{eq:E} with $N=240$, 
the assertion of the theorem is equivalent to
\begin{multline*}
  0=1-\frac{E_{53}E_{67}E_{86}}{E_{43}E_{77}E_{106}}+\frac{E_{53}E_{67}E_{74}}{E_{37}E_{83}E_{106}}-\frac{E_{26}E_{53}E_{67}}{E_{13}E_{106}E_{107}}\\
  -\frac{E_1E_6E_9E_{10}E_{15}E_{16}E_{24}E_{25}E_{26}E_{31}E_{39}E_{40}E_{41}E_{49}E_{54}E_{55}E_{56}}{E_2E_3E_{12}E_{13}E_{18}E_{27}E_{28}E_{37}E_{43}E_{52}}\\
\times
\frac
    {E_{64}E_{65}E_{70}E_{71}E_{74}E_{79}E_{80}E_{81}E_{86}E_{89}E_{90}E_{95}E_{96}E_{104}E_{105}E_{111}E_{119}} 
{E_{62}E_{68}E_{77}E_{78}E_{82}E_{83}E_{92}E_{93}E_{98}E_{107}E_{108}E_{117}}.
\end{multline*}
As before, each term is a modular
function for the group $\Gamma_1(240)$. The upper bound 
on the order of the right-hand side at~$i\infty$ is 592. We verified that the
right-hand side has the form $0+0q+\dots+0q^{592}+\cdots$. This proves the
theorem. 
\end{proof}

\begin{proof}[Proof of Theorem \ref{thm:240m+121}]
It is elementary to see that squares that are congruent to $121$
modulo $240$ are of the form $S^2$, where
$S\equiv11,19,29,59,61,91,101,109$~$(\text{mod}~120)$.
If we now do a computation analogous to the one in the proof of
Theorem~\ref{thm:840m+361}, we obtain
\begin{multline} \label{eq:240m+121} 
\sum_{n=0}^\infty(-1)^{\fl{(n+2)/4}}q^{a_n}=
(q^{120},-q^{49},-q^{71};q^{120})_\infty
+q\,(q^{120},-q^{41},-q^{79};q^{120})_\infty\\
-q^3\,(q^{120},-q^{31},-q^{89};q^{120})_\infty
-q^{14}\,(q^{120},-q,-q^{119};q^{120})_\infty\\
=\frac{(q^{120};q^{120})_{\infty}(q^{98},q^{142};q^{240})_{\infty}}{(q^{49},q^{71};q^{120})_{\infty}}+q\frac{(q^{120};q^{120})_{\infty}(q^{82},q^{158};q^{240})_{\infty}}{(q^{41},q^{79};q^{120})_{\infty}}\\
  -q^3\frac{(q^{120};q^{120})_{\infty}(q^{62},q^{178};q^{240})_{\infty}}{(q^{31},q^{89};q^{120})_{\infty}}-q^{14}\frac{(q^{120};q^{120})_{\infty}(q^{2},q^{238};q^{240})_{\infty}}{(q^{1},q^{119};q^{120})_{\infty}}.
\end{multline}
Next, we observe that, using the notation \eqref{eq:E} with $N=240$, 
the assertion of the theorem is equivalent to
\begin{multline*}
  0=1+\frac{E_{49}E_{71}E_{82}}{E_{41}E_{79}E_{98}}-\frac{E_{49}E_{62}E_{71}}{E_{31}E_{89}E_{98}}-\frac{E_{2}E_{49}E_{71}}{E_{1}E_{98}E_{119}}\\
  -\frac{E_2E_3E_5E_{8}E_{13}E_{18}E_{27}E_{30}E_{32}E_{35}E_{37}E_{40}E_{43}E_{45}E_{48}E_{50}E_{53}}{E_1E_4E_{6}E_{9}E_{26}E_{31}E_{36}E_{39}E_{41}E_{44}E_{54}}\\
\times
\frac
    {E_{62}E_{67}E_{72}E_{75}E_{77}E_{78}E_{80}E_{82}E_{83}E_{85}E_{88}E_{93}E_{107}E_{110}E_{112}E_{115}E_{117}}
 {E_{74}E_{76}E_{79}E_{81}E_{84}E_{86}E_{89}E_{106}E_{111}E_{116}E_{119}}.
\end{multline*}
As before, each term is a modular
function for the group $\Gamma_1(240)$. The upper bound 
on the order of the right-hand side at~$i\infty$ is 592. We verified that the
right-hand side has the form $0+0q+\dots+0q^{592}+\cdots$. This proves the
theorem. 
\end{proof}

\begin{proof}[Proof of Theorem \ref{thm:240m+169}]
It is elementary to see that squares that are congruent to $169$
modulo $240$ are of the form $S^2$, where
$S\equiv13,37,43,53,67,77,83,107$~$(\text{mod}~120)$.
If we now do a computation analogous to the one in the proof of
Theorem~\ref{thm:840m+361}, we obtain
\begin{multline} \label{eq:240m+169}
\sum_{n=0}^\infty(-1)^{\fl{5n/4}}q^{a_n}=
(q^{120},-q^{47},-q^{73};q^{120})_\infty
-q^5\,(q^{120},-q^{23},-q^{97};q^{120})_\infty\\
+q^7\,(q^{120},-q^{17},-q^{103};q^{120})_\infty
-q^{11}\,(q^{120},-q^7,-q^{113};q^{120})_\infty\\
=\frac{(q^{120};q^{120})_{\infty}(q^{94},q^{146};q^{240})_{\infty}}{(q^{47},q^{73};q^{120})_{\infty}}-q^5\frac{(q^{120};q^{120})_{\infty}(q^{46},q^{194};q^{240})_{\infty}}{(q^{23},q^{97};q^{120})_{\infty}}\\
  +q^7\frac{(q^{120};q^{120})_{\infty}(q^{34},q^{206};q^{240})_{\infty}}{(q^{17},q^{103};q^{120})_{\infty}}-q^{11}\frac{(q^{120};q^{120})_{\infty}(q^{14},q^{226};q^{240})_{\infty}}{(q^{7},q^{113};q^{120})_{\infty}}.
\end{multline}
Next, we observe that, using the notation \eqref{eq:E} with $N=240$, 
the assertion of the theorem is equivalent to
\begin{multline*}
  0=1-\frac{E_{46}E_{47}E_{73}}{E_{23}E_{94}E_{97}}+\frac{E_{34}E_{47}E_{73}}{E_{17}E_{94}E_{103}}-\frac{E_{14}E_{47}E_{73}}{E_{7}E_{94}E_{113}}\\
  -\frac{E_5E_{11}E_{14}E_{16}E_{19}E_{21}E_{24}E_{29}E_{30}E_{34}E_{35}E_{40}E_{45}E_{46}E_{50}E_{51}E_{56}E_{59}}{E_7E_{12}E_{17}E_{22}E_{23}E_{28}E_{33}E_{38}E_{42}E_{52}E_{57}E_{58}}\\
\times
\frac
    {E_{61}E_{64}E_{66}E_{69}E_{75}E_{80}E_{85}E_{91}E_{96}E_{99}E_{101}E_{104}E_{109}E_{110}E_{114}E_{115}}
 {E_{63}E_{68}E_{87}E_{92}E_{97}E_{102}E_{103}E_{108}E_{113}E_{118}}.
\end{multline*}
As before, each term is a modular
function for the group $\Gamma_1(240)$. The upper bound 
on the order of the right-hand side at~$i\infty$ is 592. We verified that the
right-hand side has the form $0+0q+\dots+0q^{592}+\cdots$. This proves the
theorem. 
\end{proof}

\begin{proof}[Proof of Theorem \ref{thm:15m+1}]
This is a special case of Theorem~\ref{thm:3Pm+a^2}.
\end{proof}

\begin{proof}[Proof of Theorem \ref{thm:15m+4}]
This is a special case of Theorem~\ref{thm:3Pm+a^2}.
\end{proof}

\begin{proof}[Proof of Theorem \ref{thm:6.7}]
This is a direct consequence of the Jacobi triple product
identity \eqref{eq:JTP}: one replaces $q$ by $q^{10}$ and then
chooses $z=-q^{6}$ there.
\end{proof}

\begin{proof}[Proof of Theorem \ref{thm:6.8}]
Using \eqref{eq:n(n+1)}, we obtain
\begin{equation} \label{eq:6.8} 
\sum_{n=0}^\infty \big(q^{n(n+1)}- q^{5n(n+1)+1}\big)
=
(q^2;q^2)_\infty\,(-q^2;q^2)_\infty^2
-q\,(q^{10};q^{10})_\infty\,(-q^{10};q^{10})_\infty^2.
\end{equation}
Hence, the assertion of the theorem is equivalent to
$$0=\frac{(q^4;q^4)_{\infty}^2}{(q^2;q^2)_{\infty}}-q\frac{(q^{20};q^{20})_{\infty}^2}{(q^{10};q^{10})_{\infty}}-\frac {(q,q^9,q^{10};q^{10})_\infty\,(q^8,q^{12};q^{20})_\infty}
{(q^2,q^3;q^5)_\infty}.$$
Using the notation \eqref{eq:E} with $N=20$, 
we see that this is equivalent to
$$0=1-\frac{E_2E_6}{E_4E_8}-\frac{E_1E_6E_9E_{10}}{E_3E_4E_7E_8}.$$
The right-hand side is a modular function for the group
$\Gamma_1(20)$. 
The cusps for this group are computed as usual using {\sl Magma}:
$$\left\{\infty,0,\frac{1}{7},\frac{3}{20},\frac{1}{6},\frac{2}{11},\frac{1}{5},\frac{1}{4},\frac{3}{10},\frac{1}{3},\frac{7}{20},\frac{11}{30},\frac{3}{8},\frac{2}{5},\frac{9}{20},\frac{1}{2},\frac{7}{12},\frac{3}{5},\frac{3}{4},\frac{4}{5}\right\}.$$
There are in total $20$ cusps. Estimating the order of the right-hand
side at~$i\infty$, one obtains an upper bound of $4$. Hence
it is sufficient to show that the right-hand side has the form
$0+0q+0q^2+0q^3+0q^4+\cdots$, which can be done routinely.  
\end{proof}

\begin{proof}[Proof of Theorem \ref{thm:6.9}]
Using \eqref{eq:n^2}, we obtain
\begin{equation} \label{eq:6.9} 
1+\sum_{n=1}^\infty\big( q^{n^2}+q^{5n^2}\big)
=\frac {1} {2}\big((q^2;q^2)_\infty\,(-q;q^2)_\infty^2
+(q^{10};q^{10})_\infty\,(-q^5;q^{10})_\infty^2\big).
\end{equation}
Hence, the assertion of the theorem is equivalent to
$$0=\frac{1}{2}\frac{(q^2;q^2)_{\infty}(q^2;q^4)_{\infty}^2}{(q;q^2)^2_{\infty}}+\frac{1}{2}\frac{(q^{10};q^{10})_{\infty}(q^{10};q^{20})^2_{\infty}}{(q^5;q^{10})^2_{\infty}}-\frac {(q^2,q^8,q^{10};q^{10})_\infty\,(q^6,q^{14};q^{20})_\infty}
{(q,q^4;q^5)_\infty}.$$
Using the notation \eqref{eq:E} with $N=20$, 
we see that this is equivalent to
$$0=\frac{1}{2}+\frac{1}{2}\frac{E_1^2E_3^2E_7^2E_9^2}{E_2^3E_4E_6^3E_8}-\frac{E_1E_3^2E_5^2E_7^2E_9}{E_2^2E_4^2E_6^3E_{10}}.$$
The right-hand side is a modular function for the group $\Gamma_1(20)$.
There are in total the $20$ cusps exhibited in the previous proof. 
Estimating the order of the right-hand
side at~$i\infty$, one obtains an upper bound of $4$. Hence
it is sufficient to show that the right-hand side has the form
$0+0q+0q^2+0q^3+0q^4+\cdots$, which can be done routinely.  
\end{proof}

\begin{proof}[Proof of Theorem \ref{thm:6.10}]
This is a direct consequence of the Jacobi triple product
identity \eqref{eq:JTP}: one replaces $q$ by $q^{10}$ and then
chooses $z=-q^{7}$ there.
\end{proof}

\begin{proof}[Proof of Theorem \ref{thm:6.11}]
Using \eqref{eq:n(n+1)}, we obtain
\begin{equation} \label{eq:6.11} 
  \sum_{n=0}^\infty \big(q^{n(n+1)}+ q^{5n(n+1)+1}\big)
=
(q^2;q^2)_\infty\,(-q^2;q^2)_\infty^2
+q\,(q^{10};q^{10})_\infty\,(-q^{10};q^{10})_\infty^2.
\end{equation}
Hence, the assertion of the theorem is equivalent to
$$0=\frac{(q^4;q^4)_{\infty}^2}{(q^2;q^2)_{\infty}}+q\frac{(q^{20};q^{20})_{\infty}^2}{(q^{10};q^{10})_{\infty}}-\frac{(q^3,q^7,q^{10};q^{10})_{\infty}(q^4,q^{16};q^{20})_{\infty}}{(q,q^4;q^5)_{\infty}}.$$
Using the notation \eqref{eq:E} with $N=20$, 
we see that this is equivalent to
$$0=1+\frac{E_2E_6}{E_4E_8}-\frac{E_2E_3E_7E_{10}}{E_1E_4E_8E_9}.$$
The right-hand side is a modular function for the group $\Gamma_1(20)$.
There are in total the $20$ cusps exhibited in the proof of Theorem~\ref{thm:6.9}. 
Estimating the order of the right-hand
side at~$i\infty$, one obtains an upper bound of $4$. Hence
it is sufficient to show that the right-hand side has the form
$0+0q+0q^2+0q^3+0q^4+\cdots$, which can be done routinely.  
\end{proof}

\begin{proof}[Proof of Theorem \ref{thm:6.12}]
This is a direct consequence of the Jacobi triple product
identity \eqref{eq:JTP}: one replaces $q$ by $q^{10}$ and then
chooses $z=-q^{8}$ there.
\end{proof}

\begin{proof}[Proof of Theorem \ref{thm:6.13}]
This is a direct consequence of the Jacobi triple product
identity \eqref{eq:JTP}: one replaces $q$ by $q^{10}$ and then
chooses $z=-q^{9}$ there.
\end{proof}

\begin{proof}[Proof of Theorem \ref{thm:6.14}]
Using \eqref{eq:n^2}, we obtain
\begin{equation} \label{eq:6.14} 
  \sum_{n=1}^\infty \big(q^{n^2-1}- q^{5n^2-1}\big)
=\frac {1} {2q}\big((q^2;q^2)_\infty\,(-q;q^2)_\infty^2
-(q^{10};q^{10})_\infty\,(-q^5;q^{10})_\infty^2\big).
\end{equation}
Hence, the assertion of the theorem is equivalent to
$$0=\frac{1}{2q}\frac{(q^2;q^2)_{\infty}(q^2;q^4)_{\infty}^2}{(q;q^2)^2_{\infty}}-\frac{1}{2q}\frac{(q^{10};q^{10})_{\infty}(q^{10};q^{20})_{\infty}^2}{(q^5;q^{10})_{\infty}^2}-\frac{(q^4,q^6,q^{10};q^{10})_{\infty}(q^2,q^{18};q^{20})_{\infty}}{(q^2;q^3;q^5)_{\infty}}.$$
Using the notation \eqref{eq:E} with $N=20$, 
we see that this is equivalent to
$$0=\frac{1}{2}-\frac{1}{2}\frac{E_1^2E_3^2E_7^2E_9^2}{E_2^3E_4E_6^3E_8}-\frac{E_1^2E_3E_5^2E_7E_9^2}{E_2^3E_6^2E_8^2E_{10}}.$$
The right-hand side is a modular function for the group $\Gamma_1(20)$.
There are in total the $20$ cusps exhibited in the proof of Theorem~\ref{thm:6.9}. 
Estimating the order of the right-hand
side at~$i\infty$, one obtains an upper bound of $4$. Hence
it is sufficient to show that the right-hand side has the form
$0+0q+0q^2+0q^3+0q^4+\cdots$, which can be done routinely.  
\end{proof}

\begin{proof}[Proof of Theorem \ref{thm:120m+49}]
This is a special case of Corollary~\ref{cor:24Pm+a^2}.
\end{proof}

\begin{proof}[Proof of Theorem \ref{thm:120m+1}]
This is a special case of Theorem~\ref{thm:24Pm+a^2}.
\end{proof}

\begin{proof}[Proof of Theorem \ref{thm:168m+121}]
This is a special case of Corollary~\ref{cor:24Pm+a^2}.
\end{proof}

\begin{proof}[Proof of Theorem \ref{thm:168m+1}]
This is a special case of Theorem~\ref{thm:24Pm+a^2}.
\end{proof}

\begin{proof}[Proof of Theorem \ref{thm:168m+25}]
This is a special case of Theorem~\ref{thm:24Pm+a^2}.
\end{proof}

\begin{proof}[Proof of Theorem \ref{thm:48m+1}]
It is elementary to see that squares that are congruent to $1$
modulo $48$ are of the form $N^2$, where
$N\equiv1,7,17,23$~$(\text{mod}~24)$.
If we now do a computation analogous to the one in the proof of
Theorem~\ref{thm:840m+361}, we obtain
\begin{equation} \label{eq:48m+1} 
\sum_{n=0}^\infty(-1)^{\fl{(n+2)/4}}q^{a_n}
=
(q^{24},q^{13},q^{11};q^{24})_\infty
+q(q^{24},q^{19},q^{5};q^{24})_\infty.
\end{equation}
Next, we observe that, using the notation \eqref{eq:E} with $N=24$, 
the assertion of the theorem is equivalent to
$$0=1+\frac{E_5}{E_{11}}-\frac{E_2E_6E_8E_{10}}{E_1E_7E_9E_{11}}.$$
The right-hand side is a modular function for the group
$\Gamma_1(24)$. 
The cusps for this group are computed as usual using {\sl Magma}:
$$\left\{\infty,0,\frac{1}{8},\frac{1}{7},\frac{1}{6},\frac{2}{11},\frac{1}{5},\frac{5}{24},\frac{3}{14},\frac{2}{9},\frac{1}{4},\frac{7}{24},\frac{11}{36},\frac{1}{3},\frac{3}{8},\frac{7}{18},\frac{5}{12},\frac{4}{9},\frac{11}{24},\frac{1}{2},\frac{9}{16},\frac{5}{8},\frac{2}{3},\frac{3}{4}\right\}.$$
There are in total $24$ cusps. 
Estimating the order of the right-hand
side at~$i\infty$, one obtains an upper bound of $4$. Hence
it is sufficient to show that the right-hand side has the form
$0+0q+0q^2+0q^3+0q^4+\cdots$, which can be done routinely.  
\end{proof}

\begin{proof}[Proof of Theorem \ref{thm:48m+25}]
It is elementary to see that squares that are congruent to $25$
modulo $48$ are of the form $S^2$, where
$S\equiv5,11,13,19$~$(\text{mod}~24)$.
If we now do a computation analogous to the one in the proof of
Theorem~\ref{thm:840m+361}, we obtain
\begin{equation} \label{eq:48m+25} 
\sum_{n=0}^\infty(-1)^{\fl{5n/4}}q^{a_n}
=
(q^{24},q^{17},q^{7};q^{24})_\infty
-q^{2}(q^{24},q^{23},q;q^{24})_\infty.
\end{equation}
Next, we observe that, using the notation \eqref{eq:E} with $N=24$, 
the assertion of the theorem is equivalent to
$$0=1-\frac{E_1}{E_{7}}-\frac{E_2E_6E_8E_{10}}{E_3E_5E_7E_{11}}.$$
The right-hand side is a modular function for the group
$\Gamma_1(24)$. 
There are in total the $24$ cusps exhibited in the previous proof. 
Estimating the order of the right-hand
side at~$i\infty$, one obtains an upper bound of $4$. Hence
it is sufficient to show that the right-hand side has the form
$0+0q+0q^2+0q^3+0q^4+\cdots$, which can be done routinely.  
\end{proof}

\begin{proof}[Proof of Theorem \ref{thm:21m+1}]
This is a special case of Theorem~\ref{thm:3Pm+a^2}.
\end{proof}

\begin{proof}[Proof of Theorem \ref{thm:21m+4}]
This is a special case of Theorem~\ref{thm:3Pm+a^2}.
\end{proof}

\begin{proof}[Proof of Theorem \ref{thm:21m+16}]
This theorem is a special case of Corollary~\ref{cor:3Pm+a^2}.
\end{proof}

\begin{proof}[Proof of Theorem \ref{thm:16m+1}]
This is a special case of Theorem~\ref{thm:16m+a^2}.
\end{proof}

\begin{proof}[Proof of Theorem \ref{thm:16m+9}]
This is a special case of Theorem~\ref{thm:16m+a^2}.
\end{proof}

\section{Theta function identities}
\label{sec:theta}

The purpose of this section is, first of all, to present
Weierstra{\ss}' addition formula for theta functions, and, second,
to make two special cases explicit that are
particularly used in the proofs of our theorems from
Sections~\ref{sec:Ids1} and~\ref{sec:Ids2} given in the 
next section, and also in the proofs in
Section~\ref{sec:Ids3}.

\medskip
In this section and the following ones, we use a different notation
for the theta functions that appear in our context, namely
$$
\theta(\alpha;q):=(\alpha,q/\alpha;q)_\infty.
$$
It should be noted that, up to a power of~$q$,
the function $E_g(q;N)$ that we used in Sections~\ref{sec:auto}
and \ref{sec:mech} can be expressed as $\theta(q^g;q^N)$.

\medskip
Using the above notation, Weierstra\ss' addition formula (cf.\ 
\cite[p.~451, Example~5]{WW}) reads
\begin{multline}\label{eq:tadd}
\theta(xy;q)\,\theta(x/y;q)\,\theta(uv;q)\,\theta(u/v;q)-
\theta(xv;q)\,\theta(x/v;q)\,\theta(uy;q)\,\theta(u/y;q)\\
=\frac uy\,\theta(yv;q)\,\theta(y/v;q)\,\theta(xu;q)\,\theta(x/u;q).
\end{multline}

\medskip
Two specialisations of this formula are of particular importance in our
context.
If, in \eqref{eq:tadd}, we replace $q$ by $q^{3N}$
and specialise $x=q^N$, $y=u^2/q^N$, and
$v=q^N/u$, then we obtain the relation
\begin{multline*}
\theta(u^2;q^{3N})\,\theta(q^{2N}/u^2;q^{3N})\,\theta(q^N;q^{3N})\,\theta(u^2/q^N;q^{3N})\\
-
\theta(q^{2N}/u;q^{3N})\,\theta(u;q^{3N})\,\theta(u^3/q^N;q^{3N})\,\theta(q^N/u;q^{3N})\\
=\frac {q^N} {u}\theta(u;q^{3N})\,\theta(u^3/q^{2N};q^{3N})\,\theta(q^Nu;q^{3N})\,\theta(q^N/u;q^{3N}),
\end{multline*}
or, equivalently,
\begin{multline*} 
\theta(u^3/q^N;q^{3N})
+\frac {q^N} {u}\,\theta(u^3/q^{2N};q^{3N})\\
=
\frac
{\theta(u^2;q^{3N})\,\theta(q^{2N}/u^2;q^{3N})\,\theta(q^N;q^{3N})\,\theta(u^2/q^N;q^{3N})}
{\theta(u;q^{3N})\,\theta(q^N/u;q^{3N})\,\theta(q^Nu;q^{3N})}.
\end{multline*}
Written in alternative notation, this is
\begin{multline} \label{eq:threl1}
(u^3/q^N,q^{4N}/u^3,q^{3N};q^{3N})_\infty
+\frac {q^N} {u}\,(u^3/q^{2N},q^{5N}/u^3,q^{3N};q^{3N})_\infty\\
=
\frac
{(u^2/q^N,q^{2N}/u^2,q^N;q^{N})_\infty}
{(u,q^N/u;q^{N})_\infty}.
\end{multline}

\medskip
Similarly, if in \eqref{eq:tadd} we replace $q$ by $q^{3N}$
and specialise $x=q^{2N}$, $y=u^2/q^{2N}$, and
$v=q^{2N}/u$, then we obtain the relation
\begin{multline*}
\theta(u^2;q^{3N})\,\theta(q^{4N}/u^2;q^{3N})\,\theta(q^{2N};q^{3N})\,\theta(u^2/q^{2N};q^{3N})\\
-
\theta(q^{4N}/u;q^{3N})\,\theta(u;q^{3N})\,\theta(u^3/q^{2N};q^{3N})\,\theta(q^{2N}/u;q^{3N})\\
=\frac {q^{2N}} {u}\theta(u;q^{3N})\,\theta(u^3/q^{4N};q^{3N})\,\theta(q^{2N}u;q^{3N})\,\theta(q^{2N}/u;q^{3N}),
\end{multline*}
or, equivalently,
\begin{multline*} 
\theta(u^3/q^{2N};q^{3N})-
\frac {q^{3N}} {u^2}\theta(u^3/q^{4N};q^{3N})\\
=
\frac
{\theta(u^2;q^{3N})\,\theta(q^{4N}/u^2;q^{3N})\,\theta(q^{2N};q^{3N})\,\theta(u^2/q^{2N};q^{3N})}
{\theta(u;q^{3N})\,\theta(q^{2N}/u;q^{3N})\,\theta(u/q^N;q^{3N})}.
\end{multline*}
Written in alternative notation, this is
\begin{multline} \label{eq:threl2}
(u^3/q^{2N},q^{5N}/u^3,q^{3N};q^{3N})_\infty-
\frac {q^{3N}} {u^2}\,(u^3/q^{4N},q^{7N}/u^3,q^{3N};q^{3N})_\infty\\
=
\frac
{(u^2/q^{2N},q^{3N}/u^2,q^N;q^{N})_\infty}
{(u/q^N,q^{2N}/u;q^{N})_\infty}.
\end{multline}

\section{Proofs by using the Weierstra{\ss} relation}
\label{sec:Riemann}

In this section, we provide proofs of the
theorems in Sections~\ref{sec:Ids1} and~\ref{sec:Ids2} that 
utilise the Weierstra{\ss} relation \eqref{eq:tadd}.
Again, for the theorems which are
specialisations of the parametric theorems in Section~\ref{sec:Ids3},
we refer to the proofs given there (which also make use the
Weierstra{\ss} relation).

\begin{proof}[Proof of Theorem \ref{thm:840m+361}]
Our point of departure is \eqref{eq:840m361}. 
By \eqref{eq:threl1} with $N=35$ and $u=q^{26}$, respectively with
$N=35$ and $u=q^{19}$, we get
%
%
$$
\sum_{n=0}^\infty(-1)^{t(n)}q^{a_n}=
\frac
{(q^{17},q^{18},q^{35};q^{35})_\infty}
{(q^{26},q^{9};q^{35})_\infty}
+
q^4
\frac
{(q^{3},q^{32},q^{35};q^{35})_\infty}
{(q^{19},q^{16};q^{35})_\infty}.
$$
If we now replace $q$ by $q^{35}$ and 
choose $u=q^{10}$, $v=q^3$, $x=q^{14}$, and $y=q^6$
in \eqref{eq:tadd}, we obtain\footnote{In case the reader wonders how
we came up with these choices of $u,v,x,y$ (and the choices in
subsequent proofs): after a lot of trial and error which produced
some useful choices in certain cases, 
but did not lead to the recognition of 
any underlying patterns (we doubt in fact
that there are),
we decided to write a {\sl Maple} programme that goes through all possible
choices of $u,v,x,y$ in non-negative powers of~$q$ and outputs 
choices that are appropriate to establish our identities.}
\begin{multline*}
\theta(q^{17};q^{35})\, \theta(q^{11};q^{35})\,
\theta(q^{16};q^{35})\, \theta(q^4;q^{35})
+ q^4 \theta(q^{9};q^{35})\, \theta(q^{3};q^{35})\,
 \theta(q^{24};q^{35})\, \theta(q^4;q^{35}) \\
=
\theta(q^{20};q^{35})\, \theta(q^{8};q^{35})\, \theta(q^{13};q^{35})\,
\theta(q^7;q^{35}),
\end{multline*}
and thus the above right-hand side becomes
$$
\frac {\theta(q^{20};q^{35})\, \theta(q^{8};q^{35})\, \theta(q^{13};q^{35})\,
\theta(q^7;q^{35})\,(q^{35};q^{35})_\infty} 
{\theta(q^{16};q^{35})\,\theta(q^{9};q^{35})\,\theta(q^{11};q^{35})\,
\theta(q^{4};q^{35})},
$$
which is equivalent to the right-hand side of \eqref{eq:Id5.1}.
\end{proof}

\begin{proof}[Proof of Theorem \ref{thm:840m+529}]
Our point of departure is \eqref{eq:840m+529}. 
By \eqref{eq:threl1} with $N=35$ and $u=q^{33}$, respectively with
$N=35$ and $u=q^{23}$, we get
$$
\sum_{n=0}^\infty(-1)^{t(n)}q^{a_n}=
\frac
{(q^{31},q^{4},q^{35};q^{35})_\infty}
{(q^{33},q^{2};q^{35})_\infty}
-
q
\frac
{(q^{11},q^{24},q^{35};q^{35})_\infty}
{(q^{23},q^{12};q^{35})_\infty}.
$$
If we now replace $q$ by $q^{35}$ and choose
$u = q^{15}$, $v = q^3$, $x = q^{17}$, and $y = q^{14}$
in \eqref{eq:tadd}, we obtain
\begin{multline*}
\theta(q^{31};q^{35})\,
\theta(q^3;q^{35})\,
\theta(q^{18};q^{35})\,
\theta(q^{12};q^{35})
-
\theta(q^{20};q^{35})\,
\theta(q^{14};q^{35})\,
\theta(q^{29};q^{35})\,
\theta(q;q^{35})
\\
=
q
\theta(q^{17};q^{35})\,
\theta(q^{11};q^{35})\,
\theta(q^{32};q^{35})\,
\theta(q^2;q^{35}),
\end{multline*}
and thus the above right-hand side becomes
$$
\frac {\theta(q^{20};q^{35})\,
\theta(q^{14};q^{35})\,
\theta(q^{29};q^{35})\,
\theta(q;q^{35})\,
(q^{35};q^{35})_\infty}
{\theta(q^{17};q^{35})\,
\theta(q^{12};q^{35})\,
\theta(q^{32};q^{35})\,
\theta(q^2;q^{35})},
$$
which is equivalent to the right-hand side of \eqref{eq:Id5.2}.
\end{proof}

\begin{proof}[Proof of Theorem \ref{thm:840m+121}]
Our point of departure is \eqref{eq:840m+121}. 
By \eqref{eq:threl1} with $N=35$ and $u=q^{31}$, respectively with
$N=35$ and $u=q^{24}$, we get
$$
\sum_{n=0}^\infty(-1)^{\fl{(n+4)/8}}q^{a_n}=
\frac
{(q^{27},q^{8},q^{35};q^{35})_\infty}
{(q^{31},q^{4};q^{35})_\infty}
+
q
\frac
{(q^{13},q^{22},q^{35};q^{35})_\infty}
{(q^{24},q^{11};q^{35})_\infty}.
$$
If we now replace $q$ by $q^{35}$ and choose
$u = q^{14}$, $v = q^9$, $x = q^{15}$, and $y = q^{13}$
in \eqref{eq:tadd}, we obtain
\begin{multline*}
\theta(q^{28};q^{35})\,
\theta(q^2;q^{35})\,
\theta(q^{23};q^{35})\,
\theta(q^{5};q^{35})
-
\theta(q^{24};q^{35})\,
\theta(q^{6};q^{35})\,
\theta(q^{27};q^{35})\,
\theta(q;q^{35})
\\
=
q
\theta(q^{2};q^{35})\,
\theta(q^{4};q^{35})\,
\theta(q^{29};q^{35})\,
\theta(q;q^{35}),
\end{multline*}
and thus the above right-hand side becomes
$$
\frac {\theta(q^{28};q^{35})\,
\theta(q^2;q^{35})\,
\theta(q^{23};q^{35})\,
\theta(q^{5};q^{35})
(q^{35};q^{35})_\infty}
{\theta(q^{4};q^{35})\,
\theta(q^{11};q^{35})\,
\theta(q^{6};q^{35})\,
\theta(q;q^{35})},
$$
which is equivalent to the right-hand side of \eqref{eq:Id5.3}.
\end{proof}

\begin{proof}[Proof of Theorem \ref{thm:840m+289}]
Our point of departure is \eqref{eq:840m+289}. 
By \eqref{eq:threl1} with $N=35$ and $u=q^{32}$, respectively with
$N=35$ and $u=q^{18}$, we get
$$
\sum_{n=0}^\infty(-1)^{t(n)}q^{a_n}=
\frac
{(q^{29},q^{6},q^{35};q^{35})_\infty}
{(q^{32},q^{3};q^{35})_\infty}
-
q^5
\frac
{(q,q^{34},q^{35};q^{35})_\infty}
{(q^{18},q^{17};q^{35})_\infty}.
$$
If we now replace $q$ by $q^{35}$ and choose
$u = q^{7}$, $v = q^{15}$, $x = q$, and $y = q^{2}$
in \eqref{eq:tadd}, we obtain
\begin{multline*}
\theta(q^{17};q^{35})\,
\theta(q^8;q^{35})\,
\theta(q^{6};q^{35})\,
\theta(q^{13};q^{35})
-
\theta(q^{9};q^{35})\,
\theta(q^{16};q^{35})\,
\theta(q^{14};q^{35})\,
\theta(q^5;q^{35})
\\
=
q^5
\theta(q;q^{35})\,
\theta(q^{8};q^{35})\,
\theta(q^{22};q^{35})\,
\theta(q^3;q^{35})
\end{multline*}
after little manipulation.
Thus, the above right-hand side becomes
$$
\frac {\theta(q^{9};q^{35})\,
\theta(q^{16};q^{35})\,
\theta(q^{14};q^{35})\,
\theta(q^5;q^{35})
(q^{35};q^{35})_\infty}
{\theta(q^{17};q^{35})\,
\theta(q^{3};q^{35})\,
\theta(q^{13};q^{35})\,
\theta(q^8;q^{35})},
$$
which is equivalent to the right-hand side of \eqref{eq:Id5.4}.
\end{proof}

\begin{proof}[Proof of Theorem \ref{thm:840m+1}]
Our point of departure is \eqref{eq:840m+1}. 
By \eqref{eq:threl1} with $N=35$ and $u=q^{29}$, respectively with
$N=35$ and $u=q^{34}$, we get
$$
\sum_{n=0}^\infty(-1)^{\fl{(n+4)/8}}q^{a_n}=
\frac
{(q^{23},q^{12},q^{35};q^{35})_\infty}
{(q^{29},q^{6};q^{35})_\infty}
+
q
\frac
{(q^{33},q^{2},q^{35};q^{35})_\infty}
{(q^{34},q;q^{35})_\infty}.
$$
If we now replace $q$ by $q^{35}$ and choose
$u = q^{5}$, $v = q^{14}$, $x = q^{2}$, and $y = q^{4}$
in \eqref{eq:tadd}, we obtain
\begin{multline*}
\theta(q^{18};q^{35})\,
\theta(q^3;q^{35})\,
\theta(q^{10};q^{35})\,
\theta(q^{7};q^{35})
-
\theta(q^{12};q^{35})\,
\theta(q^{9};q^{35})\,
\theta(q^{16};q^{35})\,
\theta(q;q^{35})
\\
=
q
\theta(q^{9};q^{35})\,
\theta(q^{6};q^{35})\,
\theta(q^{19};q^{35})\,
\theta(q^2;q^{35})
\end{multline*}
after little manipulation.
Thus, the above right-hand side becomes
$$
\frac {\theta(q^{18};q^{35})\,
\theta(q^3;q^{35})\,
\theta(q^{10};q^{35})\,
\theta(q^{7};q^{35})
(q^{35};q^{35})_\infty}
{\theta(q^{6};q^{35})\,
\theta(q;q^{35})\,
\theta(q^{16};q^{35})\,
\theta(q^9;q^{35})},
$$
which is equivalent to the right-hand side of \eqref{eq:Id5.5}.
\end{proof}

\begin{proof}[Proof of Theorem \ref{thm:840m+169}]
Our point of departure is \eqref{eq:840m+169}. 
By \eqref{eq:threl1} with $N=35$ and $u=q^{27}$, respectively with
$N=35$ and $u=q^{22}$, we get
$$
\sum_{n=0}^\infty(-1)^{t(n)}q^{a_n}=
\frac
{(q^{19},q^{16},q^{35};q^{35})_\infty}
{(q^{27},q^{8};q^{35})_\infty}
+
q^2
\frac
{(q^{9},q^{26},q^{35};q^{35})_\infty}
{(q^{22},q^{13};q^{35})_\infty}.
$$
If we now replace $q$ by $q^{35}$ and choose
$u = q^{7}$, $v = q^{16}$, $x = q^{3}$, and $y = q^{5}$
in \eqref{eq:tadd}, we obtain
\begin{multline*}
\theta(q^{21};q^{35})\,
\theta(q^4;q^{35})\,
\theta(q^{11};q^{35})\,
\theta(q^{10};q^{35})
-
\theta(q^{13};q^{35})\,
\theta(q^{12};q^{35})\,
\theta(q^{19};q^{35})\,
\theta(q^2;q^{35})
\\
=
q^2
\theta(q^{9};q^{35})\,
\theta(q^{8};q^{35})\,
\theta(q^{23};q^{35})\,
\theta(q^2;q^{35})
\end{multline*}
after little manipulation.
Thus, the above right-hand side becomes
$$
\frac {\theta(q^{21};q^{35})\,
\theta(q^4;q^{35})\,
\theta(q^{11};q^{35})\,
\theta(q^{10};q^{35})
(q^{35};q^{35})_\infty}
{\theta(q^{8};q^{35})\,
\theta(q^{13};q^{35})\,
\theta(q^{12};q^{35})\,
\theta(q^2;q^{35})},
$$
which is equivalent to the right-hand side of \eqref{eq:Id5.6}.
\end{proof}

\begin{proof}[Proof of Theorem \ref{thm:240m+1}]
Our point of departure is \eqref{eq:240m+1}. 
By \eqref{eq:threl1} with $N=40$ and $u=-q^{33}$, respectively with
$N=40$ and $u=-q^{23}$, we get
$$
\sum_{n=0}^\infty(-1)^{\fl{(n+2)/4}}q^{a_n}=
\frac
{(q^{26},q^{14},q^{40};q^{40})_\infty}
{(-q^{33},-q^{7};q^{40})_\infty}
+
q^4
\frac
{(q^{6},q^{34},q^{40};q^{40})_\infty}
{(-q^{23},-q^{17};q^{40})_\infty}.
$$
If we now replace $q$ by $q^{40}$ and choose
$u = -q^{9}$, $v = q^{16}$, 
$x = -q$, and $y = -q^{5}$
in \eqref{eq:tadd}, we obtain
\begin{multline*}
\theta(-q^{21};q^{40})\,
\theta(q^8;q^{40})\,
\theta(q^{10};q^{40})\,
\theta(-q^{11};q^{40})\\
-
\theta(q^{14};q^{40})\,
\theta(-q^{15};q^{40})\,
\theta(-q^{17};q^{40})\,
\theta(q^4;q^{40})
\\
=
q^4
\theta(q^{6};q^{40})\,
\theta(-q^{7};q^{40})\,
\theta(-q^{25};q^{40})\,
\theta(q^4;q^{40})
\end{multline*}
after little manipulation.
Thus, the above right-hand side becomes
$$
\frac {\theta(-q^{21};q^{40})\,
\theta(q^8;q^{40})\,
\theta(q^{10};q^{40})\,
\theta(-q^{11};q^{40})
\theta(q;q^{40})\,
(q^{40};q^{40})_\infty}
{\theta(-q^{17};q^{40})\,
\theta(-q^{7};q^{40})\,
\theta(-q^{15};q^{40})\,
\theta(q^4;q^{40})},
$$
which is equivalent to the right-hand side of \eqref{eq:Id6.1}.
\end{proof}

\begin{proof}[Proof of Theorem \ref{thm:240m+49}]
Our point of departure is the first two lines in \eqref{eq:240m+49}. 
By \eqref{eq:threl1} with $N=40$ and $u=-q^{31}$, respectively with
$N=40$ and $u=-q^{39}$, we get
$$
\sum_{n=0}^\infty(-1)^{\fl{5n/4}}q^{a_n}=
\frac
{(q^{22},q^{18},q^{40};q^{40})_\infty}
{(-q^{31},-q^{9};q^{40})_\infty}
-
q
\frac
{(q^{38},q^{2},q^{40};q^{40})_\infty}
{(-q^{39},-q;q^{40})_\infty}.
$$
If we now replace $q$ by $q^{40}$ and choose
$u = -q^{8}$, $v = q^5$, $x = q^{17}$, and $y = q^{7}$
in \eqref{eq:tadd}, we obtain
\begin{multline*}
\theta(q^{24};q^{40})\,
\theta(q^{10};q^{40})\,
\theta(-q^{13};q^{40})\,
\theta(-q^{3};q^{40})\\
-
\theta(q^{22};q^{40})\,
\theta(q^{12};q^{40})\,
\theta(-q^{15};q^{40})\,
\theta(-q;q^{40})
\\
=
-q
\theta(q^{12};q^{40})\,
\theta(q^{2};q^{40})\,
\theta(-q^{25};q^{40})\,
\theta(-q^9;q^{40}),
\end{multline*}
and thus the above right-hand side becomes
$$
\frac {\theta(q^{24};q^{40})\,
\theta(q^{10};q^{40})\,
\theta(-q^{13};q^{40})\,
\theta(-q^{3};q^{40})
(q^{40};q^{40})_\infty}
{\theta(-q;q^{40})\,
\theta(-q^{9};q^{40})\,
\theta(q^{12};q^{40})\,
\theta(-q^{15};q^{40})},
$$
which is equivalent to the right-hand side of \eqref{eq:Id6.2}.
\end{proof}

\begin{proof}[Proof of Theorem \ref{thm:240m+121}]
The point of departure is the first two lines in \eqref{eq:240m+121}.
By \eqref{eq:threl1} with $N=40$ and $u=-q^{37}$, respectively with
$N=40$ and $u=-q^{27}$, we get
$$
\sum_{n=0}^\infty(-1)^{\fl{(n+2)/4}}q^{a_n}
=
\frac
{(q^{34},q^{6},q^{40};q^{40})_\infty}
{(-q^{37},-q^{3};q^{40})_\infty}
+
q
\frac
{(q^{14},q^{26},q^{40};q^{40})_\infty}
{(-q^{27},-q^{13};q^{40})_\infty}.
$$
By \eqref{eq:tadd} with $q$ replaced by $q^{40}$, 
$u=-q^{16}$, $v=q^{11}$, $y=q^{15}$, 
and $x=q^{19}$,
we get
\begin{multline*}
\theta(q^{34};q^{40})\,\theta(q^4;q^{40})\,\theta(-q^{27};q^{40})\,\theta(-q^5;q^{40})-
\theta(q^{30};q^{40})\,\theta(q^8;q^{40})\,\theta(-q^{31};q^{40})\,\theta(-q;q^{40})\\
=-q\,\theta(q^{26};q^{40})\,\theta(q^4;q^{40})\,\theta(-q^{35};q^{40})\,\theta(-q^3;q^{40}),
\end{multline*}
and thus the above right-hand side becomes
$$
\frac
    {\theta(q^{30};q^{40})\,\theta(q^8;q^{40})\,\theta(-q^{31};q^{40})\,\theta(-q;q^{40})\,(q^{40};q^{40})_\infty} 
{\theta(-q^{27};q^{40})\,\theta(-q^{37};q^{40})\,
\theta(q^{4};q^{40})\,\theta(-q^5;q^{40})},
$$
which is equivalent to the right-hand side of \eqref{eq:Id6.5}.
\end{proof}

\begin{proof}[Proof of Theorem \ref{thm:240m+169}]
Our point of departure is the first two lines in \eqref{eq:240m+169}. 
By \eqref{eq:threl1} with $N=40$ and $u=-q^{29}$, respectively 
by \eqref{eq:threl2} with
$N=40$ and $u=-q^{59}$, we get
$$
\sum_{n=0}^\infty(-1)^{\fl{5n/4}}q^{a_n}=
\frac
{(q^{18},q^{22},q^{40};q^{40})_\infty}
{(-q^{29},-q^{11};q^{40})_\infty}
-
q^5
\frac
{(q^{38},q^{2},q^{40};q^{40})_\infty}
{(-q^{19},-q^{21};q^{40})_\infty}.
$$
If we now replace $q$ by $q^{40}$ and choose
$u = -q^{13}$, $v = -q^{15}$, 
$x = -q^{3}$, and $y = q^{8}$
in \eqref{eq:tadd}, we obtain
\begin{multline*}
\theta(-q^{23};q^{40})\,
\theta(q^{10};q^{40})\,
\theta(q^{16};q^{40})\,
\theta(-q^{7};q^{40})\\
-
\theta(-q^{21};q^{40})\,
\theta(q^{12};q^{40})\,
\theta(q^{18};q^{40})\,
\theta(-q^5;q^{40})
\\
=
-q^5
\theta(-q^{11};q^{40})\,
\theta(q^{2};q^{40})\,
\theta(q^{28};q^{40})\,
\theta(-q^5;q^{40})
\end{multline*}
after little manipulation.
Thus, the above right-hand side becomes
$$
\frac {\theta(-q^{23};q^{40})\,
\theta(q^{10};q^{40})\,
\theta(q^{16};q^{40})\,
\theta(-q^{7};q^{40})
(q^{40};q^{40})_\infty}
{\theta(-q^{11};q^{40})\,
\theta(-q^{19};q^{40})\,
\theta(q^{12};q^{40})\,
\theta(-q^5;q^{40})},
$$
which is equivalent to the right-hand side of \eqref{eq:Id6.6}.
\end{proof}

\begin{proof}[Proof of Theorem \ref{thm:15m+1}]
This is a special case of Theorem~\ref{thm:3Pm+a^2}.
\end{proof}

\begin{proof}[Proof of Theorem \ref{thm:15m+4}]
This is a special case of Theorem~\ref{thm:3Pm+a^2}.
\end{proof}

\begin{proof}[Proof of Theorem \ref{thm:6.7}]
As we already said, this is a direct consequence of the Jacobi triple product
identity \eqref{eq:JTP}: one replaces $q$ by $q^{10}$ and then
chooses $z=-q^{6}$ there.
\end{proof}

\begin{proof}[Proof of Theorem \ref{thm:6.8}]
We start again with (cf.\ \eqref{eq:6.8})
$$
\sum_{n=0}^\infty \big(q^{n(n+1)}- q^{5n(n+1)+1}\big)
=
(q^2;q^2)_\infty\,(-q^2;q^2)_\infty^2
-q\,(q^{10};q^{10})_\infty\,(-q^{10};q^{10})_\infty^2.
$$
The right-hand side can be rewritten as
\begin{equation} \label{eq:Aus6.8} 
\frac {(q^4;q^4)_\infty^2}
{(q^2;q^2)_\infty}
-q\frac {(q^{20};q^{20})_\infty^2} {q^{10};q^{10})_\infty}
=
\frac {(q^{20};q^{20})_\infty^2} 
{q^{10};q^{10})_\infty}
\left(
\frac {
\theta(q^4;q^{20})\,
\theta(q^8;q^{20})\,
}
{
\theta(q^2;q^{20})\,
\theta(q^6;q^{20})\,
}
-q\right).
\end{equation}
If in \eqref{eq:tadd} we replace $q$ by $q^{20}$ and choose
$u = q^5$, $v = q^2$, $x = q^{12}$, and $y = q^4$, then we obtain
\begin{multline*}
\theta(q^{16};q^{20})\,\theta(q^8;q^{20})\,
\theta(q^7;q^{20})\,\theta(q^3;q^{20})-
\theta(q^{14};q^{20})\,\theta(q^{10};q^{20})\,
\theta(q^9;q^{20})\,\theta(q;q^{20})\\
=q\,\theta(q^6;q^{20})\,\theta(q^2;q^{20})\,
\theta(q^{17};q^{20})\,\theta(q^7;q^{20}).
\end{multline*}
If this is used in \eqref{eq:Aus6.8}, then we obtain the
right-hand side of \eqref{eq:Id6.8} after little manipulation.
\end{proof}

\begin{proof}[Proof of Theorem \ref{thm:6.9}]
We start again with (cf.\ \eqref{eq:6.9})
\begin{equation} \label{eq:Aus6.9} 
1+\sum_{n=1}^\infty\big( q^{n^2}+q^{5n^2}\big)
=\frac {1} {2}\big((q^2;q^2)_\infty\,(-q;q^2)_\infty^2
+(q^{10};q^{10})_\infty\,(-q^5;q^{10})_\infty^2\big).
\end{equation}
Now, we have the relation\footnote{In combinatorial terms, this is
Euler's theorem that the number of partitions of~$n$ into odd parts
equals the number of partitions of~$n$ into distinct parts.}
 (cf.\ \cite[Eq.~(8.10.9)]{GaRaAA})
\begin{equation} \label{eq:Euler} 
(q;q^2)_\infty^{-1}=(-q;q)_\infty.
\end{equation}
Upon replacement of $q$ by $-q$, we obtain the variant
\begin{equation} \label{eq:Euler2} 
(-q;q^2)_\infty^{-1}=(q;q^2)_\infty\,(-q^2;q^2)_\infty.
\end{equation}
We use the latter identity in \eqref{eq:Aus6.9} and get
\begin{multline*}
1+\sum_{n=1}^\infty\big( q^{n^2}+q^{5n^2}\big)
=\frac {1} {2}\left(
\frac {(q^2;q^2)_\infty\,(-q;q^2)_\infty}
{(q;q^2)_\infty\,(-q^2;q^2)_\infty}
+(q^{10};q^{10})_\infty\,(-q^5;q^{10})_\infty^2\right)\\
=\frac {1} {2}
\left(
\frac {
\theta(q^2;q^{10})\,
\theta(q^4;q^{10})\,
(q^{10};q^{10})_\infty\,
\theta(-q;q^{10})\,
\theta(-q^3;q^{10})\,
(-q^5;q^{10})_\infty
}
{
\theta(q;q^{10})\,
\theta(q^3;q^{10})\,
(q^5;q^{10})_\infty\,
\theta(-q^2;q^{10})\,
\theta(-q^4;q^{10})\,
(-q^{10};q^{10})_\infty
}\right.\\
\left.\vphantom{\frac {\theta(s)} {\theta(s)}}
+(q^{10};q^{10})_\infty\,
(-q^5;q^{10})_\infty^2\right).
\end{multline*}
If, in the first expression within parentheses we use \eqref{eq:Euler2}
with $q$ replaced by $q^5$, then the above becomes
\begin{multline*}
1+\sum_{n=1}^\infty\big( q^{n^2}+q^{5n^2}\big)
=\frac {(q^{10};q^{10})_\infty\,
(-q^5;q^{10})_\infty^2} 
{2\,\theta(q;q^{10})\,
\theta(q^3;q^{10})\,
\theta(-q^2;q^{10})\,
\theta(-q^4;q^{10})
}\\
\times
\left(
\theta(q^2;q^{10})\,
\theta(q^4;q^{10})\,
\theta(-q;q^{10})\,
\theta(-q^3;q^{10})
\right.\\
\left.\vphantom{\frac {\theta(s)} {\theta(s)}}
+
\theta(q;q^{10})\,
\theta(q^3;q^{10})\,
\theta(-q^2;q^{10})\,
\theta(-q^4;q^{10})
\right).
\end{multline*}
We now replace $q$ by $q^{10}$ in \eqref{eq:tadd} and
choose $u = -q^3$, $v = q$, $x = q^4$, and $y = q^3$. This gives the relation
\begin{multline*}
\theta(q^2;q^{10})\,
\theta(q^4;q^{10})\,
\theta(-q;q^{10})\,
\theta(-q^3;q^{10})
+\theta(q;q^{10})\,
\theta(q^3;q^{10})\,
\theta(-q^2;q^{10})\,
\theta(-q^4;q^{10})
\\
=
\theta(q^3;q^{10})\,
\theta(-q^4;q^{10})\,
\theta(q^5;q^{10})\,
\theta(-1;q^{10}).
\end{multline*}
If this is used in the above expression, then we obtain
\eqref{eq:Id6.9} after some manipulation, where \eqref{eq:Euler2} is
again used with $q$ replaced by $q^5$.
\end{proof}

\begin{proof}[Proof of Theorem \ref{thm:6.10}]
As we already said, this is a direct consequence of the Jacobi triple product
identity \eqref{eq:JTP}: one replaces $q$ by $q^{10}$ and then
chooses $z=-q^{7}$ there.
\end{proof}

\begin{proof}[Proof of Theorem \ref{thm:6.11}]
We start again with (cf.\ \eqref{eq:6.11})
$$
  \sum_{n=0}^\infty \big(q^{n(n+1)}+ q^{5n(n+1)+1}\big)
=
(q^2;q^2)_\infty\,(-q^2;q^2)_\infty^2
+q\,(q^{10};q^{10})_\infty\,(-q^{10};q^{10})_\infty^2.
$$
The right-hand side can be rewritten as
\begin{equation} \label{eq:Aus6.11} 
\frac {(q^4;q^4)_\infty^2}
{(q^2;q^2)_\infty}
+q\frac {(q^{20};q^{20})_\infty^2} {(q^{10};q^{10})_\infty}
=
\frac {(q^{20};q^{20})_\infty^2} 
{(q^{10};q^{10})_\infty}
\left(
\frac {
\theta(q^4;q^{20})\,
\theta(q^8;q^{20})\,
}
{
\theta(q^2;q^{20})\,
\theta(q^6;q^{20})\,
}
+q\right).
\end{equation}
If in \eqref{eq:tadd} we replace $q$ by $q^{20}$ and choose
$u = q^5$, $v = q^2$, $x = q^6$, and $y = q^4$, then we obtain
\begin{multline*}
\theta(q^{10};q^{20})\,\theta(q^2;q^{20})\,
\theta(q^7;q^{20})\,\theta(q^3;q^{20})-
\theta(q^{8};q^{20})\,\theta(q^{4};q^{20})\,
\theta(q^9;q^{20})\,\theta(q;q^{20})\\
=q\,\theta(q^6;q^{20})\,\theta(q^2;q^{20})\,
\theta(q^{11};q^{20})\,\theta(q;q^{20}).
\end{multline*}
If this is used in \eqref{eq:Aus6.11}, then we obtain the
right-hand side of \eqref{eq:Id6.11} after little manipulation.
\end{proof}

\begin{proof}[Proof of Theorem \ref{thm:6.12}]
As we already said, this is a direct consequence of the Jacobi triple product
identity \eqref{eq:JTP}: one replaces $q$ by $q^{10}$ and then
chooses $z=-q^{8}$ there.
\end{proof}

\begin{proof}[Proof of Theorem \ref{thm:6.13}]
As we already said, this is a direct consequence of the Jacobi triple product
identity \eqref{eq:JTP}: one replaces $q$ by $q^{10}$ and then
chooses $z=-q^{9}$ there.
\end{proof}

\begin{proof}[Proof of Theorem \ref{thm:6.14}]
We start again with (cf.\ \eqref{eq:6.14})
\begin{equation} \label{eq:Aus6.14} 
  \sum_{n=1}^\infty \big(q^{n^2-1}- q^{5n^2-1}\big)
=\frac {1} {2q}\big((q^2;q^2)_\infty\,(-q;q^2)_\infty^2
-(q^{10};q^{10})_\infty\,(-q^5;q^{10})_\infty^2\big).
\end{equation}
The right-hand side is almost the same expression as the one
on the right-hand side of \eqref{eq:Aus6.9}, except for a 
multiplicative factor of $1/q$ and a changed sign.
Hence, by proceeding as in the alternative proof of 
Theorem~\ref{thm:6.9}, we arrive at
\begin{multline*}
  \sum_{n=1}^\infty \big(q^{n^2-1}- q^{5n^2-1}\big)
=\frac {(q^{10};q^{10})_\infty\,
(-q^5;q^{10})_\infty^2} 
{2q\,\theta(q;q^{10})\,
\theta(q^3;q^{10})\,
\theta(-q^2;q^{10})\,
\theta(-q^4;q^{10})
}\\
\times
\left(
\theta(q^2;q^{10})\,
\theta(q^4;q^{10})\,
\theta(-q;q^{10})\,
\theta(-q^3;q^{10})
\right.\\
\left.\vphantom{\frac {\theta(s)} {\theta(s)}}
-
\theta(q;q^{10})\,
\theta(q^3;q^{10})\,
\theta(-q^2;q^{10})\,
\theta(-q^4;q^{10})
\right).
\end{multline*}
We now replace $q$ by $q^{10}$ in \eqref{eq:tadd} and
choose $u = -q^3$, $v = q$, $x = q^4$, and $y = q^3$. This gives the relation
\begin{multline*}
\theta(q^4;q^{10})\,
\theta(q^2;q^{10})\,
\theta(-q^3;q^{10})\,
\theta(-q;q^{10})
-\theta(-q^4;q^{10})\,
\theta(-q^2;q^{10})\,
\theta(q^3;q^{10})\,
\theta(q;q^{10})
\\
=q\,
\theta(-q^2;q^{10})\,
\theta(-1;q^{10})\,
\theta(q^5;q^{10})\,
\theta(q;q^{10}).
\end{multline*}
If this is used in the above expression, then we obtain
\eqref{eq:Id6.14} after some manipulation, where \eqref{eq:Euler2} is
again used with $q$ replaced by $q^5$.
\end{proof}

\begin{proof}[Proof of Theorem \ref{thm:120m+49}]
This is a special case of Corollary~\ref{cor:24Pm+a^2}.
\end{proof}

\begin{proof}[Proof of Theorem \ref{thm:120m+1}]
This theorem is a special case of Theorem~\ref{thm:24Pm+a^2}.
\end{proof}

\begin{proof}[Proof of Theorem \ref{thm:168m+121}]
This is a special case of Corollary~\ref{cor:24Pm+a^2}.
\end{proof}

\begin{proof}[Proof of Theorem \ref{thm:168m+1}]
This theorem is a special case of Theorem~\ref{thm:24Pm+a^2}.
\end{proof}

\begin{proof}[Proof of Theorem \ref{thm:168m+25}]
This theorem is a special case of Theorem~\ref{thm:24Pm+a^2}.
\end{proof}

\begin{proof}[Proof of Theorem \ref{thm:48m+1}]
Using \eqref{eq:threl1} with $N=8$ and $u=q^7$, the right-hand side
of \eqref{eq:48m+1} can be simplified into one product. 
This gives the claimed result.
\end{proof}

\begin{proof}[Proof of Theorem \ref{thm:48m+25}]
Using \eqref{eq:threl2} with $N=8$ and $u=q^{11}$, the right-hand side
of \eqref{eq:48m+25} can be simplified into one product. 
This gives the claimed result.
\end{proof}

\begin{proof}[Proof of Theorem \ref{thm:21m+1}]
This theorem is a special case of Theorem~\ref{thm:3Pm+a^2}.
\end{proof}

\begin{proof}[Proof of Theorem \ref{thm:21m+4}]
This theorem is a special case of Theorem~\ref{thm:3Pm+a^2}.
\end{proof}

\begin{proof}[Proof of Theorem \ref{thm:21m+16}]
This theorem is a special case of Corollary~\ref{cor:3Pm+a^2}.
\end{proof}

\begin{proof}[Proof of Theorem \ref{thm:16m+1}]
This is a special case of Theorem~\ref{thm:16m+a^2}.
\end{proof}

\begin{proof}[Proof of Theorem \ref{thm:16m+9}]
This is a special case of Theorem~\ref{thm:16m+a^2}.
\end{proof}

\section{Parametric families of generating functions for 
sequences of squares}
\label{sec:Ids3}

Here we present two results on generating functions for sequences of
squares that contain parameters. The first of these results consists
in Theorem~\ref{thm:24Pm+a^2} and Corollary~\ref{cor:24Pm+a^2}, while
the second consists 
in Theorem~\ref{thm:3Pm+a^2} and Corollary~\ref{cor:3Pm+a^2}.
For each theorem-corollary pair the proof is the same, but the theorem
covers a parameter range that is different from that of the corollary.
We conclude the section by stating, and proving, a uniform version of
Theorems~\ref{thm:16m+1} and~\ref{thm:16m+9}.

\medskip
The following theorem covers Theorems~\ref{thm:120m+1},
\ref{thm:168m+1} and \ref{thm:168m+25}.

\begin{theorem} \label{thm:24Pm+a^2}
Let\/ $P$ be an odd prime power different from a power of~$3$, 
and let $a$ be an odd 
positive integer relatively prime to~$P$ and less than~$P$.
	Let\/ $(a_n)_{n\ge0}$ be the sequence of non-negative integers $m$ such
	that $24Pm+a^2$ is a square. 
\begin{enumerate}
\item If \hbox{$a\equiv P$}~{\em(mod~$3$)}, then
	\begin{equation} \label{eq:24Pm+a^2-1}
	\sum_{n=0}^\infty (-1)^{\fl{(n+2)/4}}q^{a_n}=
	\frac {(q^{(P-a)/3},q^{(2P+a)/3},q^P;q^P)_\infty} 
{(q^{(P-a)/6},q^{(5P+a)/6};q^P)_\infty}.
	\end{equation}
\item If \hbox{$a\not\equiv P$}~{\em(mod~$3$)}, then
	\begin{equation} \label{eq:24Pm+a^2-2}
	\sum_{n=0}^\infty (-1)^{\fl{(n+2)/4}}q^{a_n}=
	\frac {(q^{(P+a)/3},q^{(2P-a)/3},q^P;q^P)_\infty} 
{(q^{(P+a)/6},q^{(5P-a)/6};q^P)_\infty}.
	\end{equation}
\end{enumerate}
\end{theorem}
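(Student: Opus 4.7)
The plan is to follow the Weierstra\ss-based strategy of Section~\ref{sec:Riemann}: apply the Jacobi triple product identity to the alternating sum on the left-hand side to express it as a linear combination of four theta products, collapse these four to two by a second (reverse) application of JTP, and reduce the resulting sum of two theta products to the single product on the right by invoking~\eqref{eq:threl1}.

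First I would classify the residues $s$ with $0<s<12P$ and $24P\mid s^2-a^2$ by applying the Chinese Remainder Theorem to the factorisation $24P=8\cdot3\cdot P$; integrality of the exponents in the claimed product forces $\gcd(P,6)=\gcd(a,6)=1$. Using $a<P$, the four representatives with $0<s<6P$ come out to
\[
\{a,\,2P-a,\,4P+a,\,6P-a\}\text{ in case~1,}\qquad
\{a,\,2P+a,\,4P-a,\,6P-a\}\text{ in case~2,}
\]
and the remaining four representatives in $(6P,12P)$ are the reflections $12P-s$. Write the four in $(0,6P)$ as $s_1<s_2<s_3<s_4$, so that $s_1=a$, $s_4=6P-s_1$ and $s_3=6P-s_2$.

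Since $(-1)^{\lfloor(n+2)/4\rfloor}$ has period $8$ with the palindromic pattern $+,+,-,-,-,-,+,+$, and since exactly eight positive $S$ with $S^2\equiv a^2\pmod{24P}$ lie in each interval $(12Pk,12P(k+1)]$, the sign attached to $S=12Pk+s$ depends only on the residue $s$, and $s$ and $12P-s$ receive the same sign. The $k\mapsto-k-1$ substitution used in the proof of Theorem~\ref{thm:840m+361} then produces four full sums $T_{s_i}:=\sum_{k\in\mathbb{Z}}q^{6Pk^2+s_ik+c_{s_i}}$ with $c_s:=(s^2-a^2)/(24P)$, which appear in the left-hand side with signs $+,+,-,-$ for $i=1,2,3,4$. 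Splitting the summation index in JTP into even and odd parts gives
\[
T_s-T_{6P-s}=q^{c_s}\bigl(q^{(3P-s)/2},q^{(3P+s)/2},q^{3P};q^{3P}\bigr)_\infty
\]
(using $c_{6P-s}-c_s=(3P-s)/2$, which is immediate from the definitions); applied to $s=s_1=a$ and $s=s_2$ this collapses the four-term expression into
\[
\bigl(q^{(3P-a)/2},q^{(3P+a)/2},q^{3P};q^{3P}\bigr)_\infty
+q^{c_{s_2}}\bigl(q^{(3P-s_2)/2},q^{(3P+s_2)/2},q^{3P};q^{3P}\bigr)_\infty,
\]
with $c_{s_2}=(P\mp a)/6$ in case~1 (where $s_2=2P-a$) respectively case~2 (where $s_2=2P+a$).

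Finally, specialising \eqref{eq:threl1} to $N=P$ and $u=q^{(5P+a)/6}$ in case~1 (respectively $u=q^{(5P-a)/6}$ in case~2) gives an integral value of $u$ by the defining congruence of the case, and a direct calculation of the six exponents shows that the left-hand side of~\eqref{eq:threl1} then coincides exactly with the two-term expression above, while its right-hand side is the product in~\eqref{eq:24Pm+a^2-1} or~\eqref{eq:24Pm+a^2-2}. The main obstacle is the combinatorial bookkeeping in the preceding step: verifying that the $+,+,-,-$ sign pattern aligns with the super-pairing $(s_1,s_4),(s_2,s_3)$ so that the required \emph{differences} $T_{s_1}-T_{s_4}$ and $T_{s_2}-T_{s_3}$ actually appear, together with the case-dependent matching of the parameter $u$ in \eqref{eq:threl1}; once these are in place, the remainder is a routine verification of integer exponents.
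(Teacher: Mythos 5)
Your proposal is correct and takes essentially the same route as the paper: the same four residue classes, reduction via the Jacobi triple product identity to a sum of two theta products, and a single application of \eqref{eq:threl1} with $N=P$ and $u=q^{(5P+a)/6}$ (case~1) resp.\ $u=q^{(5P-a)/6}$ (case~2). The only difference is cosmetic bookkeeping --- you track unsigned bilateral sums $T_s$ modulo $12P$ where the paper uses alternating bilateral sums modulo $6P$, the two being related by the even/odd split you describe --- and your remark that integrality of the exponents forces $\gcd(a,6)=\gcd(P,6)=1$ makes explicit a hypothesis the paper leaves implicit.
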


\begin{proof}
We have to find all $S$ such that 
\begin{equation} \label{eq:S^2a^2} 
S^2\equiv a^2\pmod{24P}.
\end{equation}
We claim that there are the following two cases:

\begin{enumerate}
\item[(C1)] If $a\equiv P$~(mod~$3$), then
$S\equiv a,2P-a,4P+a,6P-a$~$(\text{mod}~6P)$.
\item[(C2)] If $a\not\equiv P$~(mod~$3$), then
$S\equiv a,2P+a,4P-a,6P-a$~$(\text{mod}~6P)$.
\end{enumerate}

It should be noted that the condition $a<P$ guarantees that,
in both cases, the
congruence classes above are listed in increasing order.

By assumption, $a$ is odd. Hence, also $S$ must be odd
and automatically $S^2\equiv a^2\equiv1$~(mod~$8$).
Consequently, the congruence \eqref{eq:S^2a^2} can be reduced to
$$
(S-a)(S+a)\equiv0\pmod{6P}.
$$
Two solutions are immediate, namely $S\equiv a$~(mod~$6P$) and
$S\equiv -a$~(mod~$6P$). There are two further possibilities:
\begin{equation} \label{eq:N1} 
S\equiv a~(\text{mod }6)\quad \text{and}\quad 
S\equiv -a~(\text{mod }P),
\end{equation}
and
\begin{equation} \label{eq:N2} 
S\equiv -a~(\text{mod }6)\quad \text{and}\quad 
S\equiv a~(\text{mod }P).
\end{equation}
Under our assumption that $a$ is relatively prime to~$P$, there
are no other possibilities. For, writing $P=p^e$ with $p$ an odd
prime number, the simultaneous congruences
$$
S\equiv a~(\text{mod }p^\alpha)\quad \text{and}\quad 
S\equiv -a~(\text{mod }p^\beta)
$$
with $\alpha+\beta=e$ and both $\alpha$ and $\beta$ positive
would imply that $2a$ is divisible by~$p$, a contradiction to
our assumptions. 

It is straightforward to see that, depending on whether $a\equiv
P$~(mod~$3$) or not, the solutions to \eqref{eq:N1} respectively to
\eqref{eq:N2} are given by the second and third option in (C1) and (C2)
above. 

\medskip
We now discuss Case~(C1). Here, we have
\begin{multline*} 
\sum_{n=0}^\infty(-1)^{\fl{(n+2)/4}}q^{a_n}=
\sum_{k=0}^\infty (-1)^kq^{\frac {1} {24P}((6Pk+a)^2-a^2)}
+\sum_{k=0}^\infty (-1)^kq^{\frac {1} {24P}((6Pk+2P-a)^2-a^2)}\\
-\sum_{k=0}^\infty (-1)^kq^{\frac {1} {24P}((6Pk+4P+a)^2-a^2)}
-\sum_{k=0}^\infty (-1)^kq^{\frac {1} {24P}((6Pk+6P-a)^2-a^2)}\\
=
\sum_{k=0}^\infty (-1)^kq^{\frac 12(3Pk^2+ak)}
+\sum_{k=0}^\infty (-1)^kq^{\frac 12(3Pk^2+(2P-a)k)+\frac {1} {6}(P-a)}
\kern3cm\\
-\sum_{k=0}^\infty (-1)^kq^{\frac 12(3Pk^2+(4P+a)k)+\frac {1} {3}(2P+a)}
-\sum_{k=0}^\infty (-1)^kq^{\frac 12(3Pk^2+(6P-a)k)+\frac {1} {2}(3P-a)}.
\end{multline*}
Again, sums can be put together in pairs, 
so that one obtains two sums over {\it all\/} integers~$k$:
$$
\sum_{n=0}^\infty(-1)^{\fl{(n+2)/4}}q^{a_n}=
\sum_{k=-\infty}^\infty (-1)^kq^{\frac 12(3Pk^2+ak)}
+\sum_{k=-\infty}^\infty (-1)^kq^{\frac 12(3Pk^2+(2P-a)k)+\frac {1} {6}(P-a)}.
$$
Now, to each of these sums we apply the Jacobi triple product identity
\eqref{eq:JTP} to get
\begin{multline*}
\sum_{n=0}^\infty(-1)^{\fl{(n+2)/4}}q^{a_n}=
(q^{3P},q^{(3P+a)/2},q^{(3P-a)/2};q^{3P})_\infty\\
+q^{(P-a)/6}\,(q^{3P},q^{(5P-a)/2},q^{(P+a)/2};q^{3P})_\infty.
\end{multline*}
The sum of these two products simplifies to the single product on
the right-hand side of \eqref{eq:24Pm+a^2-1} as is
seen by applying \eqref{eq:threl1} with $N=P$ and $u=q^{(5P+a)/6}$.

\medskip
On the other hand, if we are in Case~(C2), then we have
\begin{multline*} 
\sum_{n=0}^\infty(-1)^{\fl{(n+2)/4}}q^{a_n}=
\sum_{k=0}^\infty (-1)^kq^{\frac {1} {24P}((6Pk+a)^2-a^2)}
+\sum_{k=0}^\infty (-1)^kq^{\frac {1} {24P}((6Pk+2P+a)^2-a^2)}\\
-\sum_{k=0}^\infty (-1)^kq^{\frac {1} {24P}((6Pk+4P-a)^2-a^2)}
-\sum_{k=0}^\infty (-1)^kq^{\frac {1} {24P}((6Pk+6P-a)^2-a^2)}\\
=
\sum_{k=0}^\infty (-1)^kq^{\frac 12(3Pk^2+ak)}
+\sum_{k=0}^\infty (-1)^kq^{\frac 12(3Pk^2+(2P+a)k)+\frac {1} {6}(P+a)}
\kern3cm\\
-\sum_{k=0}^\infty (-1)^kq^{\frac 12(3Pk^2+(4P-a)k)+\frac {1} {3}(2P-a)}
-\sum_{k=0}^\infty (-1)^kq^{\frac 12(3Pk^2+(6P-a)k)+\frac {1} {2}(3P-a)}.
\end{multline*}
Again, sums can be put together in pairs, 
so that one obtains two sums over {\it all\/} integers~$k$:
$$
\sum_{n=0}^\infty(-1)^{\fl{(n+2)/4}}q^{a_n}=
\sum_{k=-\infty}^\infty (-1)^kq^{\frac 12(3Pk^2+ak)}
+\sum_{k=-\infty}^\infty (-1)^kq^{\frac 12(3Pk^2+(2P+a)k)+\frac {1} {6}(P+a)}.
$$
Now, to each of these sums we apply the Jacobi triple product identity
\eqref{eq:JTP} to get
\begin{multline*}
\sum_{n=0}^\infty(-1)^{\fl{(n+2)/4}}q^{a_n}=
(q^{3P},q^{(3P+a)/2},q^{(3P-a)/2};q^{3P})_\infty\\
+q^{(P+a)/6}\,(q^{3P},q^{(5P+a)/2},q^{(P-a)/2};q^{3P})_\infty.
\end{multline*}
The sum of these two products simplifies to the single product on
the right-hand side of \eqref{eq:24Pm+a^2-2} as is
seen by applying \eqref{eq:threl1} with $N=P$ and $u=q^{(5P-a)/6}$.
\end{proof}

The following corollary covers Theorems~\ref{thm:120m+49} and
\ref{thm:168m+121}.

\begin{corollary} \label{cor:24Pm+a^2}
Let\/ $P$ be an odd prime power different from a power of~$3$, 
and let $a$ be an odd 
positive integer relatively prime to~$P$ with $P<a<2P$.
	Let\/ $(a_n)_{n\ge0}$ be the sequence of non-negative integers $m$ such
	that $24Pm+a^2$ is a square. 
\begin{enumerate}
\item If \hbox{$a\equiv P$}~{\em(mod~$3$)}, then
	\begin{equation} 
	\sum_{n=0}^\infty (-1)^{\fl{(n+2)/4}}q^{a_n}=
	\frac {(q^{(P-a)/3},q^{(2P+a)/3},q^P;q^P)_\infty} 
{(q^{(P-a)/6},q^{(5P+a)/6};q^P)_\infty}.
	\end{equation}
\item If \hbox{$a\not\equiv P$}~{\em(mod~$3$)}, then
	\begin{equation} 
	\sum_{n=0}^\infty (-1)^{\fl{5n/4}}q^{a_n}=
	\frac {(q^{(P+a)/3},q^{(2P-a)/3},q^P;q^P)_\infty} 
{(q^{(P+a)/6},q^{(5P-a)/6};q^P)_\infty}.
	\end{equation}
\end{enumerate}
\end{corollary}

\begin{proof}
If one looks through the arguments of the proof of
Theorem~\ref{thm:24Pm+a^2}, then one sees that everything can be
copied verbatim, until it comes to the description of
the two cases to be considered: we have to adapt the order of
the congruence classes, as shown below.

\begin{enumerate}
\item[(C1')] If $a\equiv P$~(mod~$3$), then
$S\equiv 2P-a,a,6P-a,4P+a$~$(\text{mod}~6P)$.
\item[(C2')] If $a\not\equiv P$~(mod~$3$), then
$S\equiv a,4P-a,2P+a,6P-a$~$(\text{mod}~6P)$.
\end{enumerate}

While in Case~(C1'), the rest of the proof can be copied, in Case~(C2')
this requires a change in sign in the original sum from
$(-1)^{\fl{(n+2)/4}}$ to $(-1)^{\fl{5n/4}}$.
\end{proof}

The following theorem covers Theorems~\ref{thm:15m+1}, \ref{thm:15m+4},
\ref{thm:21m+1} and \ref{thm:21m+4}.

\begin{theorem} \label{thm:3Pm+a^2}
Let\/ $P$ be an odd prime power different from a power of~$3$,  
and let $a$ be a
positive integer relatively prime to~$P$ and less than $P/2$.
	Let\/ $(a_n)_{n\ge0}$ be the sequence of non-negative integers $m$ such
	that $3Pm+a^2$ is a square. 
\begin{enumerate}
\item If \hbox{$a\equiv P$}~{\em(mod~$3$)}, then
	\begin{equation} \label{eq:3Pm+a^2-1}
	\sum_{n=0}^\infty (-1)^{\fl{(n+2)/4}}q^{a_n}=
\frac
{(q^{(4P-4a)/3},q^{(2P+4a)/3},q^{2P};q^{2P})_\infty}
{(q^{(5P-2a)/3},q^{(P+2a)/3};q^{2P})_\infty}.
	\end{equation}
\item If \hbox{$a\not\equiv P$}~{\em(mod~$3$)}, then
	\begin{equation} 
	\sum_{n=0}^\infty (-1)^{\fl{(n+2)/4}}q^{a_n}=
\frac
{(q^{(4P+4a)/3},q^{(2P-4a)/3},q^{2P};q^{2P})_\infty}
{(q^{(5P+2a)/3},q^{(P-2a)/3};q^{2P})_\infty}.
	\end{equation}
\end{enumerate}
\end{theorem}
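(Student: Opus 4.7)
The plan is to follow the same template as the proof of Theorem~\ref{thm:24Pm+a^2} in the excerpt, namely: (i)~enumerate the residue classes mod~$3P$ whose squares are $\equiv a^2\pmod{3P}$; (ii)~expand the LHS as a sum of four half-infinite series according to these classes and the sign pattern $(-1)^{\fl{(n+2)/4}}$; (iii)~pair these into two bilateral sums over~$\mathbb Z$; (iv)~apply Jacobi's triple product identity \eqref{eq:JTP} to each; (v)~collapse the resulting sum of two theta products into a single product via the Weierstra{\ss} consequence \eqref{eq:threl1}.

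For step~(i), solving $(S-a)(S+a)\equiv 0\pmod{3P}$ with $\gcd(a,P)=1$ and $P$ an odd prime power, the mod-$P$ part forces $S\equiv\pm a\pmod{P}$; the mod-$3$ part (where one needs $3\nmid a$, which is implicit from the shape of the claimed formula, else the right-hand side would have non-integer exponents) gives $S\equiv\pm a\pmod 3$. The Chinese Remainder Theorem then produces four residues, and the hypothesis $a<P/2$ lets me sort them as $S_1=a,\ S_2=P+a,\ S_3=2P-a,\ S_4=3P-a$ in case~(1), and as $S_1=a,\ S_2=P-a,\ S_3=2P+a,\ S_4=3P-a$ in case~(2). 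In either case the involution $S\mapsto 3P-S$ pairs the classes as $\{S_1,S_4\}$ and $\{S_2,S_3\}$, and since the sign pattern $(-1)^{\fl{(n+2)/4}}$ reads $+,+,-,-$ on the four leading indices, within each pair the two members carry opposite initial signs. The substitution $k\mapsto -k-1$ therefore merges each pair into a bilateral sum of the form $\sum_{k\in\mathbb Z}(-1)^k q^{3Pk^2+2S_ik+C_i}$, with $C_i=(S_i^2-a^2)/(3P)$; the case distinction $a\equiv\pm P\pmod 3$ is exactly what makes $C_i$ integral.

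For step~(iv), applying Jacobi's triple product identity with $q\to q^{6P}$ and $z=q^{3P+2S_i}$ converts each bilateral sum into a theta product with base~$q^{6P}$; call these $B_1$ (from $S_1=a$, with $C_1=0$) and $B_2$ (from $S_2=P\mp a$, with prefactor $q^{(P\pm 2a)/3}$, upper sign in case~(1)). For step~(v), I would invoke~\eqref{eq:threl1} with $N=2P$ and $u=q^{(5P\mp 2a)/3}$: a direct match of exponents shows that $u^3/q^{2P}$ and $u^3/q^{4P}$ are precisely the ``low-degree'' factors appearing in $B_1$ and $B_2$, that the prefactor $q^N/u$ equals $q^{(P\pm 2a)/3}$, and that the right-hand side of~\eqref{eq:threl1} produces exactly the product claimed in \eqref{eq:3Pm+a^2-1} (respectively in case~(2)). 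The only delicate part is divining the right value of $u$; but this is forced by the requirement that the first theta product of~\eqref{eq:threl1} match $B_1$, after which the second automatically matches $B_2$ and the rest is bookkeeping of exponents. So the main obstacle is really just keeping the $\pm$ signs consistent across the two cases.
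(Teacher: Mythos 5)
Your proposal is correct and follows essentially the same route as the paper's own proof: the same classification of the residues $S$ modulo $3P$ into the four classes $a,\,P\pm a,\,2P\mp a,\,3P-a$, the same pairing via $S\mapsto 3P-S$ into two bilateral sums, the same application of the Jacobi triple product identity with base $q^{6P}$, and the same final collapse via \eqref{eq:threl1} with $N=2P$ and $u=q^{(5P\mp2a)/3}$. Your remark that integrality of the exponents forces $3\nmid a$ (and, implicitly, $3\nmid P$) is a point the paper itself leaves tacit, so nothing is missing relative to the published argument.
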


\begin{proof}
We have to find all $S$ such that 
\begin{equation*} 
S^2\equiv a^2\pmod{3P}.
\end{equation*}
As before, there are two cases:

\begin{enumerate}
\item[(C1)] If $a\equiv P$~(mod~$3$), then
$S\equiv a,P+a,2P-a,3P-a$~$(\text{mod}~3P)$.
\item[(C2)] If $a\not\equiv P$~(mod~$3$), then
$S\equiv a,P-a,2P+a,3P-a$~$(\text{mod}~3P)$.
\end{enumerate}

\medskip
We now discuss Case~(C1). Here, we have
\begin{multline*} 
\sum_{n=0}^\infty(-1)^{\fl{(n+2)/4}}q^{a_n}=
\sum_{k=0}^\infty (-1)^kq^{\frac {1} {3P}((3Pk+a)^2-a^2)}
+\sum_{k=0}^\infty (-1)^kq^{\frac {1} {3P}((3Pk+P+a)^2-a^2)}\\
-\sum_{k=0}^\infty (-1)^kq^{\frac {1} {3P}((3Pk+2P-a)^2-a^2)}
-\sum_{k=0}^\infty (-1)^kq^{\frac {1} {3P}((3Pk+3P-a)^2-a^2)}\\
=
\sum_{k=0}^\infty (-1)^kq^{3Pk^2+2ak}
+\sum_{k=0}^\infty (-1)^kq^{3Pk^2+2(P+a)k+\frac {1} {3}(P+2a)}
\kern3cm\\
-\sum_{k=0}^\infty (-1)^kq^{3Pk^2+2(2P-a)k+\frac {4} {3}(P-a)}
-\sum_{k=0}^\infty (-1)^kq^{3Pk^2+2(3P-a)k+3P-2a}.
\end{multline*}
Again, sums can be put together in pairs, 
so that one obtains two sums over {\it all\/} integers~$k$:
$$
\sum_{n=0}^\infty(-1)^{\fl{(n+2)/4}}q^{a_n}=
\sum_{k=-\infty}^\infty (-1)^kq^{3Pk^2+2ak}
+\sum_{k=-\infty}^\infty (-1)^kq^{3Pk^2+2(P+a)k+\frac {1} {3}(P+2a)}.
$$
Now, to each of these sums we apply the Jacobi triple product identity
\eqref{eq:JTP} to get
\begin{multline*}
\sum_{n=0}^\infty(-1)^{\fl{(n+2)/4}}q^{a_n}=
(q^{6P},q^{3P+2a},q^{3P-2a};q^{6P})_\infty\\
+q^{(P+2a)/3}\,(q^{6P},q^{5P+2a},q^{P-2a};q^{6P})_\infty.
\end{multline*}
The sum of these two products simplifies to the single product on
the right-hand side of \eqref{eq:3Pm+a^2-1} as is
seen by applying \eqref{eq:threl1} with $N=2P$ and $u=q^{(5P-2a)/3}$.

\medskip
On the other hand, if we are in Case~(C2), then we have
\begin{multline*} 
\sum_{n=0}^\infty(-1)^{\fl{(n+2)/4}}q^{a_n}=
\sum_{k=0}^\infty (-1)^kq^{\frac {1} {3P}((3Pk+a)^2-a^2)}
+\sum_{k=0}^\infty (-1)^kq^{\frac {1} {3P}((3Pk+P-a)^2-a^2)}\\
-\sum_{k=0}^\infty (-1)^kq^{\frac {1} {3P}((3Pk+2P+a)^2-a^2)}
-\sum_{k=0}^\infty (-1)^kq^{\frac {1} {3P}((3Pk+3P-a)^2-a^2)}\\
=
\sum_{k=0}^\infty (-1)^kq^{3Pk^2+2ak}
+\sum_{k=0}^\infty (-1)^kq^{3Pk^2+2(P-a)k+\frac {1} {3}(P-2a)}
\kern3cm\\
-\sum_{k=0}^\infty (-1)^kq^{3Pk^2+2(2P+a)k+\frac {4} {3}(P+a)}
-\sum_{k=0}^\infty (-1)^kq^{3Pk^2+2(3P-a)k+3P-2a}.
\end{multline*}
Again, sums can be put together in pairs, 
so that one obtains two sums over {\it all\/} integers~$k$:
$$
\sum_{n=0}^\infty(-1)^{\fl{(n+2)/4}}q^{a_n}=
\sum_{k=-\infty}^\infty (-1)^kq^{3Pk^2+2ak}
+\sum_{k=-\infty}^\infty (-1)^kq^{3Pk^2+2(P-a)k+\frac {1} {3}(P-2a)}.
$$
Now, to each of these sums we apply the Jacobi triple product identity
\eqref{eq:JTP} to get
\begin{multline*}
\sum_{n=0}^\infty(-1)^{\fl{(n+2)/4}}q^{a_n}=
(q^{6P},q^{3P+2a},q^{3P-2a};q^{6P})_\infty\\
+q^{(P-2a)/3}\,(q^{6P},q^{5P-2a},q^{P+2a};q^{6P})_\infty.
\end{multline*}
The sum of these two products simplifies to the single product on
the right-hand side of \eqref{eq:3Pm+a^2-1} as is
seen by applying \eqref{eq:threl1} with $N=2P$ and $u=q^{(5P+2a)/3}$.
\end{proof}

The following corollary covers Theorem~\ref{thm:21m+16}.

\begin{corollary} \label{cor:3Pm+a^2}
Let\/ $P$ be an odd prime power different from a power of~$3$,  
and let $a$ be a
positive integer relatively prime to~$P$ with $P/2<a<P$.
	Let\/ $(a_n)_{n\ge0}$ be the sequence of non-negative integers $m$ such
	that $3Pm+a^2$ is a square. 
\begin{enumerate}
\item If \hbox{$a\equiv P$}~{\em(mod~$3$)}, then
	\begin{equation} 
	\sum_{n=0}^\infty (-1)^{\fl{5n/4}}q^{a_n}=
\frac
{(q^{(4P-4a)/3},q^{(2P+4a)/3},q^{2P};q^{2P})_\infty}
{(q^{(5P-2a)/3},q^{(P+2a)/3};q^{2P})_\infty}.
	\end{equation}
\item If \hbox{$a\not\equiv P$}~{\em(mod~$3$)}, then
	\begin{equation} 
	\sum_{n=0}^\infty (-1)^{\fl{(n+2)/4}}q^{a_n}=
\frac
{(q^{(4P+4a)/3},q^{(2P-4a)/3},q^{2P};q^{2P})_\infty}
{(q^{(5P+2a)/3},q^{(P-2a)/3};q^{2P})_\infty}.
	\end{equation}
\end{enumerate}
\end{corollary}

\begin{proof}
Here, one goes through the proof of Theorem~\ref{thm:3Pm+a^2}.
Everything can be copied, except that the description of the
two cases to be considered now reads as shown below, and that
this requires a modification of the sign in Case~(C1').

\begin{enumerate}
\item[(C1')] If $a\equiv P$~(mod~$3$), then
$S\equiv a,2P-a,P+a,3P-a$~$(\text{mod}~3P)$.
\item[(C2')] If $a\not\equiv P$~(mod~$3$), then
$S\equiv P-a,a,3P-a,2P+a$~$(\text{mod}~3P)$.\qedhere
\end{enumerate}
\end{proof}

The following theorem covers Theorems~\ref{thm:16m+1} and \ref{thm:16m+9}.

\begin{theorem} \label{thm:16m+a^2}
Let\/ $a$ be $1$ or $3$.
Furthermore, 
let $(a_n)_{n\ge0}$ be the sequence of non-negative integers $m$ such
	that $16m+a^2$ is a square. Then
	\begin{equation} \label{eq:11a^2}
	\sum_{n=0}^\infty q^{a_n}=
(q^8;q^8)_\infty\,
(-q^{4+a};q^8)_\infty\,
(-q^{4-a};q^8)_\infty.
	\end{equation}
\end{theorem}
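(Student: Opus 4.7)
The plan is to follow the same opening move used in all the other proofs in Sections~\ref{sec:mech} and~\ref{sec:Riemann}: parametrise the square roots of $16m+a^2$, expand the generating function on the left-hand side of \eqref{eq:11a^2} explicitly, and then invoke the Jacobi triple product identity \eqref{eq:JTP}. Because the modulus~$16$ is very small and the left-hand side carries no sign, I expect this to be the simplest case in the paper, comparable to Theorems~\ref{thm:6.7}, \ref{thm:6.10}, \ref{thm:6.12} and \ref{thm:6.13}: one direct application of \eqref{eq:JTP} should suffice, with no appeal to the Weierstra{\ss} relation.

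First I would classify the residues of $S$ modulo~$8$ for which $S^2\equiv a^2\pmod{16}$. Writing $S-a=2u$ and $S+a=2v$, the congruence $16\mid(S-a)(S+a)$ translates to $4\mid uv$, which together with the parity constraint $v-u=a$ odd forces $S\equiv\pm a\pmod 8$. Hence the non-negative integers~$m$ with $16m+a^2$ a square are precisely those of the form $(S^2-a^2)/16$ with $S=8k+a$ or $S=8k+(8-a)$ and $k\ge0$; a short computation rewrites these as $m=4k^2+ak$ and $m=4k^2+(8-a)k+(4-a)$, respectively.

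Next I would combine the two resulting sums into a single bilateral sum. The substitution $k\mapsto-k-1$ in the second sum sends its exponent to $4k^2+ak$, so
\[
\sum_{n=0}^\infty q^{a_n}
=\sum_{k=0}^\infty q^{4k^2+ak}+\sum_{k=0}^\infty q^{4k^2+(8-a)k+(4-a)}
=\sum_{k=-\infty}^\infty q^{4k^2+ak}.
\]
Rewriting the exponent as $4k^2+ak=8\binom{k}{2}+(4+a)k$, I would then apply \eqref{eq:JTP} with $q$ replaced by $q^8$ and $z=-q^{4+a}$; this converts the bilateral sum into the product $(q^8,-q^{4+a},-q^{4-a};q^8)_\infty$, which is precisely the right-hand side of \eqref{eq:11a^2}.

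There is no serious obstacle. The residue analysis in the first step is elementary, and the remainder of the argument amounts to one invocation of the Jacobi triple product identity. The hypothesis $a\in\{1,3\}$ enters only through the needs that $a$ be odd (so that the square-root parity analysis works as described) and that $4-a>0$ (so that the exponents appearing in the target product are positive integers).
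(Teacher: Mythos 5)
Your proposal is correct and follows essentially the same route as the paper's own proof: classify the square roots $S$ of $16m+a^2$ as $S\equiv\pm a\pmod 8$ (using that $a$ is odd, so only one of $S\mp a$ can be divisible by $4$), fold the two resulting unilateral sums into the single bilateral sum $\sum_{k\in\mathbb{Z}}q^{4k^2+ak}$, and finish with one application of the Jacobi triple product identity \eqref{eq:JTP} with $q\to q^8$ and $z=-q^{4+a}$. The only cosmetic difference is your parametrisation $S=8k+(8-a)$, $k\ge0$, with the shift $k\mapsto-k-1$, where the paper uses $S=8k-a$, $k\ge1$; these are the same computation.
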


\begin{proof}
We have to find all $S$ such that 
$$
S^2\equiv a^2\pmod{16},
$$
or, equivalently,
\begin{equation*} 
(S-a)(S+a)\equiv 0\pmod{16}.
\end{equation*}
Since $a$ is odd, only one of the factors $N-a$ and $N+a$ can be
divisible by~$4$. Hence, either
$S\equiv a$~$(\text{mod}~8)$ or
$S\equiv -a$~$(\text{mod}~8)$.
Consequently, we have
\begin{align*} 
\sum_{n=0}^\infty q^{a_n}&=
\sum_{k=0}^\infty q^{\frac {1} {16}((8k+a)^2-a^2)}
+\sum_{k=1}^\infty q^{\frac {1} {16}((8k-a)^2-a^2)}\\
&=
\sum_{k=0}^\infty q^{4k^2+ak}
+\sum_{k=1}^\infty q^{4k^2-ak}
=
\sum_{k=-\infty}^\infty q^{4k^2+ak}.
\end{align*} 
The proof is completed by applying the Jacobi triple product
identity \eqref{eq:JTP}.
\end{proof}

\section{Consequences and open problems}
\label{sec:open}

In this section, we record a consequence of Theorem~\ref{thm:840m+361}
that is inspired by earlier work of Andrews and the second author 
\cite{Andrews12}. Furthermore, we end by reminding the reader of
a conjecture from \cite{MercAA} related to
Theorems~\ref{thm:840m+361}--\ref{thm:840m+169}.

In \cite{Andrews12}, the function $M_k(n)$ is defined 
as the number of partitions of $n$ in which $k$ is
the least positive integer that is not a part and there are more parts
$>k$ than there are parts $<k$. For example,  
if $n=18$ and $k=3$ then we have $M_3(18)=3$ because the three
partitions in question are
$$5+5+5+2+1,\quad 6+5+4+2+1,\quad \text{and} \quad 7+4+4+2+1.$$

Let $A(n)$ be the number of the partitions of $n$ into parts not
congruent to $0$, $7$, $8$, $13$, $15$, $20$, $22$, $27$, $28 \pmod
{35}$ and the parts congruent to $4$, $9$, $11$, $16$, $19$, $24$,
$26$, $31 \pmod {35}$ have two colours. 

We have the following corollary of Theorem~\ref{thm:840m+361}.

\begin{corollary}
Let\/ $k$ and $n$ be positive integers. With $a_n$ and $t(n)$ as
in Theorem~\ref{thm:840m+361}, we have
$$
(-1)^{k-1}\left( \sum_{j=-(k-1)}^{k} (-1)^j
        A\big(n-j(3j-1)/2\big)-\delta(n)\right) = \sum_{j=0}^n
        (-1)^{t(j)} M_k(n-a_j), 
$$
where
$$
\delta(n) =
\begin{cases}
(-1)^{t(m)}, &\text{if $n=a_m$,}\\ 
0, &\text{otherwise.}
\end{cases} 
$$
\end{corollary}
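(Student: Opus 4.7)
The plan is to recognise the corollary as equating coefficients of $q^{n}$ in a generating-function identity that combines Theorem~\ref{thm:840m+361} with the Andrews--Merca truncated pentagonal number theorem. The first key ingredient is the latter, which I would state in the form
\begin{equation*}
\sum_{j=-(k-1)}^{k}(-1)^{j}q^{j(3j-1)/2}=(q;q)_{\infty}\Bigl(1+(-1)^{k-1}\sum_{n\ge 1}M_{k}(n)\,q^{n}\Bigr).
\end{equation*}
This follows from the truncated pentagonal number theorem of Andrews--Merca~\cite{Andrews12} after multiplying through by $(q;q)_{\infty}$.

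The second ingredient, which I would prove, is the product formula
\begin{equation*}
\sum_{n\ge 0}A(n)\,q^{n}=\frac{(q,q^{6},q^{7};q^{7})_{\infty}}{(q,q^{4};q^{5})_{\infty}\,(q;q)_{\infty}}.
\end{equation*}
By the definition of $A(n)$, the left-hand side equals $\prod_{j\ge 1}(1-q^{j})^{-c_{j}}$, where $c_{j}=0$ if $j$ lies in one of the nine excluded residue classes modulo~$35$, $c_{j}=2$ if $j$ lies in one of the eight two-colour classes, and $c_{j}=1$ otherwise. On the right-hand side, the exponent of $(1-q^{j})^{-1}$ equals $1-\alpha_{7}(j)+\alpha_{5}(j)$, where $\alpha_{7}(j)=1$ if $j\equiv 0,1,6\pmod{7}$ and $0$ otherwise, and $\alpha_{5}(j)=1$ if $j\equiv 1,4\pmod{5}$ and $0$ otherwise. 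By the Chinese Remainder Theorem, a case-by-case check over the $35$ residue classes modulo~$35$ confirms that these two exponents agree for every~$j\ge 1$.

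Finally, I would multiply the product formula for $A$ by the truncated pentagonal sum $\sum_{j=-(k-1)}^{k}(-1)^{j}q^{j(3j-1)/2}$, apply the first ingredient on the resulting right-hand side, and use Theorem~\ref{thm:840m+361} to replace $(q,q^{6},q^{7};q^{7})_{\infty}/(q,q^{4};q^{5})_{\infty}$ by $\sum_{m\ge 0}(-1)^{t(m)}q^{a_{m}}$. This yields
\begin{equation*}
\Bigl(\sum_{n\ge 0}A(n)q^{n}\Bigr)\sum_{j=-(k-1)}^{k}(-1)^{j}q^{j(3j-1)/2}=\Bigl(\sum_{m\ge 0}(-1)^{t(m)}q^{a_{m}}\Bigr)\Bigl(1+(-1)^{k-1}\sum_{n\ge 1}M_{k}(n)q^{n}\Bigr).
\end{equation*}
Extracting the coefficient of $q^{n}$ from both sides --- noting that the coefficient of $q^{n}$ in $\sum_{m\ge 0}(-1)^{t(m)}q^{a_{m}}$ is precisely $\delta(n)$, and that $M_{k}(0)=0$ so the $M_{k}$-convolution may be written from $j=0$ to $j=n$ --- and then multiplying through by $(-1)^{k-1}$ produces exactly the stated corollary.

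The only substantive step is the residue-class verification of the $A$-product formula; it is routine but requires checking all $35$ residue classes modulo~$35$ against two independent congruence conditions, so it is where the main bookkeeping lies. Everything else is a mechanical combination of two named identities with Theorem~\ref{thm:840m+361} followed by coefficient extraction.
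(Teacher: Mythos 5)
Your proposal is correct and follows essentially the same route as the paper: multiply the generating function of $A(n)$ by the truncated pentagonal sum, invoke the Andrews--Merca truncated pentagonal number theorem together with Theorem~\ref{thm:840m+361}, and compare coefficients of $q^n$. Your product formula $\sum_{n\ge0}A(n)q^n=(q,q^6,q^7;q^7)_\infty/\bigl((q,q^4;q^5)_\infty(q;q)_\infty\bigr)$ is just an equivalent rewriting of the paper's $1/\bigl((q,q^4;q^5)_\infty(q^2,q^3,q^4,q^5;q^7)_\infty\bigr)$ (since $(q;q)_\infty=(q,q^2,\dots,q^7;q^7)_\infty$), and your residue check modulo $35$ is exactly the ``elementary generating function calculus'' the paper alludes to.
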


\begin{proof}
Elementary generating function calculus gives
\begin{equation}\label{gf:A}
\sum_{n=0}^\infty A(n) q^n = \frac {1}
                    {(q,q^4;q^5)_\infty \,(q^2,q^3,q^4,q^5;q^7)_\infty} .
\end{equation}
Similarly, it is not difficult to see that the generating function
for the numbers $M_k(n)$ is given by
\begin{equation} \label{eq:Mkn} 
\sum_{n=0}^\infty M_k(n) q^n = \sum_{n=k}^\infty \frac{q^{{\binom k
      2}+(k+1)n}}{(q;q)_n} 
\begin{bmatrix}
n-1\\k-1
\end{bmatrix},
\end{equation}
where
$$
\begin{bmatrix}
n\\k
\end{bmatrix} 
=
\begin{cases}
\dfrac {(1-q)(1-q^2)\cdots(1-q^n)} 
{(1-q)(1-q^2)\cdots(1-q^k)\,(1-q)(1-q^2)\cdots(1-q^{n-k})}, &  \text{if
                  $0\leqslant k\leqslant n$},\\ 
0, &\text{otherwise,}
\end{cases}
$$
is the usual $q$-binomial coefficient.


Andrews and the second author \cite{Andrews12}
proved the following truncated form of Euler's pentagonal number
theorem~\eqref{eq:pent}:
\begin{equation} \label{TPNT}
\frac{(-1)^{k-1}}{(q;q)_\infty} \sum_{n=-(k-1)}^{k}
                (-1)^{n} q^{n(3n-1)/2}= (-1)^{k-1}+ \sum_{n=k}^\infty
                \frac{q^{{\binom k 2}+(k+1)n}}{(q;q)_n} 
\begin{bmatrix}
n-1\\k-1
\end{bmatrix}.
\end{equation}

Multiplying both sides of \eqref{TPNT} by \eqref{gf:A}, and using
Theorem~\ref{thm:840m+361} and \eqref{eq:Mkn}, we obtain 
\begin{multline*}
 (-1)^{k-1} \left( \bigg( \sum_{n=1}^\infty A(n) q^n \bigg) \bigg(
  \sum_{n=-(k-1)}^{k} (-1)^{n} q^{n(3n-1)/2}\bigg) -\sum_{n=0}^\infty
  (-1)^{t(n)} q^{a_n}\right)   \\ 
 =   \left( \sum_{n=0}^\infty (-1)^{t(n)} q^{a_n} \right) \left(
  \sum_{n=0}^\infty M_k(n) q^n\right).
\end{multline*}
The assertion of the corollary now follows by comparing coefficients
of $q^n$ on both sides of this equation.
\end{proof}

According to \eqref{TPNT}, for $k>0$, the coefficients of $q^n$ in the series
$$
(-1)^{k-1} \left( \frac{1}{(q;q)_\infty} \sum_{j=-(k-1)}^{k} (-1)^{j}
q^{j(3j-1)/2}-1\right)  
$$
are all zero for $0\leqslant n < k(3k+1)/2$, and for  $n\geqslant
k(3k+1)/2$ all the coefficients are positive. 
Related to this result on truncated pentagonal number series, 
we remark that there is substantial numerical evidence
that there is in fact a stronger result. 

\begin{conjecture}\label{C41}
For $k>0$, the coefficients of $q^n$ in the series
\begin{multline*}
(-1)^{k-1}  \left( \frac{1}{(q;q)_\infty} \sum_{j=-(k-1)}^{k} (-1)^{j}
q^{j(3j-1)/2}-1\right) (q,q^6,q^{7};q^{7})_\infty \\
=
\frac{(-1)^k}{(q^2,q^3,q^4,q^5;q^7)_\infty}
\sum_{j=k} ^\infty (-1)^{j}q^{j(3j+1)/2}\left(1-q^{2j+1}\right)
\end{multline*}
are all zero for $0\leqslant n < k(3k+1)/2$ and $n=k(3k+1)/2+1$. For
$n=k(3k+1)/2$ and   $n\geqslant k(3k+1)/2+2$ all the coefficients are
positive.  
\end{conjecture}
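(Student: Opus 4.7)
My plan is to first use the Andrews--Merca truncated pentagonal number theorem \eqref{TPNT} to rewrite the left-hand side of the conjectured identity as $F_k(q)\cdot(q,q^6,q^7;q^7)_\infty$, where
\[F_k(q)=\sum_{n\ge k}\frac{q^{\binom{k}{2}+(k+1)n}}{(q;q)_n}\begin{bmatrix}n-1\\k-1\end{bmatrix}\]
is a power series with non-negative coefficients starting at $q^{k(3k+1)/2}$. That the two forms displayed in the conjecture agree is a routine calculation based on Euler's pentagonal theorem \eqref{eq:pent} together with the elementary identity
\[(q,q^6,q^7;q^7)_\infty/(q;q)_\infty=1/(q^2,q^3,q^4,q^5;q^7)_\infty,\]
so throughout the argument one can work with whichever representation is more convenient.

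The low-order part of the conjecture is immediate. The leading term $F_k(q)=q^{k(3k+1)/2}/(q;q)_k+O(q^{3k(k+1)/2+1})$, combined with the opening terms $(q,q^6,q^7;q^7)_\infty=1-q-q^6+q^9+q^{19}-\cdots$ read off from the Jacobi triple product, immediately establishes the vanishing for $n<k(3k+1)/2$ and the value $+1$ at $n=k(3k+1)/2$. Because the leading term contributes coefficient $1$ at both $q^{k(3k+1)/2}$ and $q^{k(3k+1)/2+1}$, the $-q$ factor in $(q,q^6,q^7;q^7)_\infty$ produces exact cancellation at $n=k(3k+1)/2+1$, yielding the predicted zero.

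The substantive content is the strict positivity claim for $n\ge k(3k+1)/2+2$. The approach I would pursue is to find a manifestly positive $q$-hypergeometric representation of the product. Two routes suggest themselves. First, an inductive argument on $k$: write $F_k(q)-F_{k-1}(q)$ as a telescoped combination of two pentagonal contributions, and control the effect of multiplying by the theta product $(q,q^6,q^7;q^7)_\infty$ via the Weierstra\ss{} three-term relation \eqref{eq:tadd}. Second, and more promising: apply the Bailey lemma to the Andrews--Merca sum, feeding in a Bailey pair associated with the modulus-$7$ Rogers--Selberg identities (which provide a positive $q$-hypergeometric expression for $1/(q^2,q^3,q^4,q^5;q^7)_\infty$), thereby expressing the full product as a nested sum of positively-signed $q$-binomial coefficients with a strictly positive leading term of degree exactly $k(3k+1)/2$.

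The main obstacle will be proving \emph{strict} positivity rather than mere non-negativity, and doing so uniformly in both $k$ and $n$. The single exceptional zero at $n=k(3k+1)/2+1$ already shows that exact cancellations are possible in principle, so ruling them out elsewhere amounts to exhibiting an explicit combinatorial model for the coefficient at each $q^n$, one that refines the usual interpretation of $1/(q^2,q^3,q^4,q^5;q^7)_\infty$ as counting partitions into parts $\equiv 2,3,4,5\pmod{7}$ by attaching a decoration corresponding to the Andrews--Merca structure of $F_k$ (a partition into parts $\le k-1$ together with a distinguished part $\ge k+1$). Constructing such a model and checking that it produces at least one admissible decorated partition for every $n\ge k(3k+1)/2+2$ is where the difficulty of the conjecture ultimately resides; a detailed verification on small values of $k$ would be the natural starting point for extracting the correct bijection.
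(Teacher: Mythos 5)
This statement is left as an open \emph{conjecture} in the paper: the authors offer no proof of the vanishing/positivity assertions, only numerical evidence, and the accompanying remark establishes merely that the two displayed expressions are equal (via Euler's pentagonal number theorem \eqref{eq:pent} and the elementary product manipulation you also use). So there is no proof in the paper to compare yours against, and the real question is whether your proposal closes the gap. It does not. What you actually establish is the easy part: by \eqref{TPNT} the left-hand side equals $F_k(q)\,(q,q^6,q^7;q^7)_\infty$ with $F_k(q)=q^{k(3k+1)/2}/(q;q)_k+O\bigl(q^{k(3k+1)/2+k+1}\bigr)$, which gives the vanishing for $n<k(3k+1)/2$, the value $1$ at $n=k(3k+1)/2$, and the cancellation $1-1=0$ at $n=k(3k+1)/2+1$. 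These computations are correct (and are essentially the same observations that motivate the conjecture in the first place).

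The substantive claim --- strict positivity of every coefficient with $n\geqslant k(3k+1)/2+2$ --- is exactly the part your proposal leaves unproved. Your two suggested routes are programmatic rather than executed: the induction-on-$k$ idea is not carried out, and the Bailey-lemma route is doubtful as stated, since the Rogers--Selberg modulus-$7$ identities give positive sum-sides for products such as $(q^3,q^4,q^7;q^7)_\infty/(q^2;q^2)_\infty$, not for $1/(q^2,q^3,q^4,q^5;q^7)_\infty$, and in any case a representation as a sum of positively-signed terms would at best yield non-negativity, not the required strict positivity --- the genuine zero at $n=k(3k+1)/2+1$ shows that no termwise-positive expansion can be the whole story. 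You acknowledge this yourself in the final paragraph. So the proposal is an accurate reduction of the conjecture to its hard core together with a verification of the boundary cases, but it is not a proof; the main positivity assertion remains open, just as in the paper.
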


\begin{remark}
The equality above follows easily from \eqref{eq:pent} and little
manipulation.
\end{remark}

If we assume Conjecture~\ref{C41}, then
we immediately deduce that the
partition function $A(n)$ satisfies the following infinite families
of linear inequalities. 

\begin{conjecture}
For $k>0$, we have
$$
(-1)^{k-1} \left( \sum_{j=-(k-1)}^k (-1)^j A\big(n-j(3j-1)/2\big)
-\delta(n) \right) \geqslant 0, 
$$
with strict inequalities if $n=k(3k+1)/2$ or $n\geqslant k(3k+1)/2+2$.
\end{conjecture}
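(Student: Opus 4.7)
The plan is to realise the left-hand side of the claimed inequality as the $q^n$-coefficient of a product of two power series: the series $F_k(q)$ whose coefficients are the subject of Conjecture~\ref{C41}, and the series $1/(q,q^4;q^5)_\infty$, whose coefficients are manifestly nonnegative. Once this identification is in place, the weak inequality follows at once from the Cauchy product rule, while the claimed strict inequalities follow from the fact that the constant term of $1/(q,q^4;q^5)_\infty$ is~$1$ together with the explicit zero/positivity pattern for the coefficients of $F_k(q)$ supplied by Conjecture~\ref{C41}.

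First, combining \eqref{gf:A} with Theorem~\ref{thm:840m+361} and the trivial factorisation $(q;q)_\infty=(q,q^6,q^7;q^7)_\infty(q^2,q^3,q^4,q^5;q^7)_\infty$, I would compute
\begin{align*}
&\left(\sum_{n=0}^\infty A(n)q^n\right)\sum_{j=-(k-1)}^k(-1)^jq^{j(3j-1)/2}-\sum_{n=0}^\infty(-1)^{t(n)}q^{a_n} \\
&\qquad=\frac{\sum_{j=-(k-1)}^k(-1)^jq^{j(3j-1)/2}-(q;q)_\infty}{(q,q^4;q^5)_\infty(q^2,q^3,q^4,q^5;q^7)_\infty}.
\end{align*}
Applying Euler's pentagonal number theorem \eqref{eq:pent} to the numerator, and substituting $j\mapsto -j$ in the $j\le -k$ tail (which converts $(-1)^jq^{j(3j-1)/2}$ into $(-1)^jq^{j(3j+1)/2}$), I can reduce the numerator to $-\sum_{j\ge k}(-1)^jq^{j(3j+1)/2}(1-q^{2j+1})$. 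Multiplying through by $(-1)^{k-1}$ and comparing with the second displayed formula of Conjecture~\ref{C41} yields
$$(-1)^{k-1}\left[\left(\sum_{n=0}^\infty A(n)q^n\right)\sum_{j=-(k-1)}^k(-1)^jq^{j(3j-1)/2}-\sum_{n=0}^\infty(-1)^{t(n)}q^{a_n}\right]=\frac{F_k(q)}{(q,q^4;q^5)_\infty},$$
whose $q^n$-coefficient is exactly $(-1)^{k-1}\bigl(\sum_{j=-(k-1)}^k(-1)^jA(n-j(3j-1)/2)-\delta(n)\bigr)$, i.e.\ the left-hand side of the inequality to be proved.

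To finish, I would invoke Conjecture~\ref{C41}, which asserts that $[q^m]F_k(q)\ge 0$ for all $m\ge 0$, with $[q^m]F_k(q)=0$ for $0\le m<k(3k+1)/2$ and for $m=k(3k+1)/2+1$, and $[q^m]F_k(q)>0$ for $m=k(3k+1)/2$ and for $m\ge k(3k+1)/2+2$. Convolving with the power series $1/(q,q^4;q^5)_\infty$, whose coefficients are nonnegative integers and whose constant term is~$1$, preserves nonnegativity (giving the weak inequality) and delivers strict positivity at $n=k(3k+1)/2$ (where the only nonzero summand is $[q^{k(3k+1)/2}]F_k(q)\cdot 1$) and at each $n\ge k(3k+1)/2+2$ (where the summand $[q^n]F_k(q)\cdot 1$ is already positive and all remaining summands are nonnegative). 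The only genuine obstacle is bookkeeping: verifying the telescoping of the partial pentagonal sum minus $(q;q)_\infty$ into the claimed tail $-\sum_{j\ge k}(-1)^jq^{j(3j+1)/2}(1-q^{2j+1})$, and tracking the identity $(q;q)_\infty/(q,q^6,q^7;q^7)_\infty=(q^2,q^3,q^4,q^5;q^7)_\infty$ through the reduction; no input beyond Conjecture~\ref{C41}, Theorem~\ref{thm:840m+361}, and the pentagonal number theorem is required.
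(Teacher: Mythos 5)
Your proposal is correct and follows exactly the route the paper intends: the paper derives this conjecture from Conjecture~\ref{C41} by precisely the observation that the generating function of the left-hand side equals the series of Conjecture~\ref{C41} multiplied by $1/(q,q^4;q^5)_\infty$, whose coefficients are nonnegative with constant term~$1$ (the paper merely states that the deduction is immediate, while you supply the bookkeeping). Note that, as in the paper, the result remains conditional on Conjecture~\ref{C41}, which is itself unproven.
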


To conclude the article, we want to recall a conjecture from
\cite{MercAA} that is very similar in appearance to
Conjecture~\ref{C41} and is related (again via \eqref{eq:pent}) to
Theorems~\ref{thm:840m+361}--\ref{thm:840m+169}.

\begin{conjecture}\label{Cexp}
For $k>0$ and $S\in\{1,2,3,4,5,6\}$, 
the coefficients of $q^n$ in the series
$$
\frac {(-1)^k} {(q,q^4;q^5)_\infty}
\sum_{j=k}^\infty (-1)^{j}
q^{7j(j+1)/2-jS}\left(1-q^{(2j+1)S}\right)
$$
and
$$
\frac {(-1)^k} {(q^2,q^3;q^5)_\infty}
\sum_{j=k}^\infty (-1)^{j}
q^{7j(j+1)/2-jS}\left(1-q^{(2j+1)S}\right)
$$
are non-negative.
\end{conjecture}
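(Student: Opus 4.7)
The plan is to adapt the Andrews--Merca/Wang--Yee truncation strategy just used in the preceding corollary. First, applying the Jacobi triple product identity \eqref{eq:JTP} with $q\mapsto q^7$ and $z=q^S$, I obtain
\begin{equation*}
\sum_{j=0}^\infty (-1)^j q^{7j(j+1)/2 - jS}\bigl(1 - q^{(2j+1)S}\bigr)=(q^S,q^{7-S},q^7;q^7)_\infty,
\end{equation*}
so the series in Conjecture~\ref{Cexp} is precisely $(-1)^k$ times the difference between this infinite theta product and its partial sum through $j=k-1$. The conjecture therefore says: this truncated tail, divided by one of the Rogers--Ramanujan products $(q,q^4;q^5)_\infty$ or $(q^2,q^3;q^5)_\infty$, has non-negative coefficients.

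The next step is to seek a truncated Jacobi triple product identity in the Wang--Yee spirit, of the schematic form
\begin{equation*}
(-1)^k\Bigl[(q^S,q^{7-S},q^7;q^7)_\infty - \sum_{j=0}^{k-1}(-1)^j q^{7j(j+1)/2-jS}(1-q^{(2j+1)S})\Bigr]=(q^S,q^{7-S},q^7;q^7)_\infty\cdot H_{k,S}(q),
\end{equation*}
where $H_{k,S}$ is an explicit series in $q$-binomial coefficients with non-negative coefficients. Once such an identity is established, substituting it into the conjecture and invoking Theorems~\ref{thm:840m+361}--\ref{thm:840m+169}, which express $(q^S,q^{7-S},q^7;q^7)_\infty$ divided by the relevant Rogers--Ramanujan product as a signed generating function $\sum_n(-1)^{t(n)}q^{a_n}$ supported on a sparse set of squares, would reduce the problem to positivity of the product $\bigl(\sum_n(-1)^{t(n)}q^{a_n}\bigr)\cdot H_{k,S}(q)$.

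The main obstacle is precisely this last positivity: a sign-alternating sparse series times a non-negative series need not have non-negative coefficients. The most promising way past this is to strengthen the truncated identity to
\begin{equation*}
(-1)^k\Bigl[(q^S,q^{7-S},q^7;q^7)_\infty - \sum_{j=0}^{k-1}(-1)^j q^{7j(j+1)/2-jS}(1-q^{(2j+1)S})\Bigr] = (q,q^4;q^5)_\infty\cdot \widetilde H_{k,S}(q)
\end{equation*}
(and analogously with $(q^2,q^3;q^5)_\infty$), with $\widetilde H_{k,S}$ having manifestly non-negative coefficients; this would imply the conjecture directly. Proving such a stronger identity is the crux of the difficulty, since it forces an interaction between the modulus-$5$ and modulus-$7$ structures that is absent from existing truncated theta identities, and it would likely require a Bailey-pair construction or a sign-reversing involution on partitions tailored to this mixed setting.
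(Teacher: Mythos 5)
This statement is presented in the paper as an open \emph{conjecture} (recalled from \cite{MercAA}) supported only by numerical evidence; the paper offers no proof of it, so there is no argument of the authors' to compare yours against. Your opening step is correct: applying \eqref{eq:JTP} with $q\mapsto q^7$ and $z=q^S$ does give
$\sum_{j\ge0}(-1)^jq^{7j(j+1)/2-jS}(1-q^{(2j+1)S})=(q^S,q^{7-S},q^7;q^7)_\infty$,
so the series in question is $(-1)^k$ times the tail of this theta expansion, divided by a Rogers--Ramanujan product. This is exactly the structural observation the authors make when they relate the conjecture to Theorems~\ref{thm:840m+361}--\ref{thm:840m+169} ``via \eqref{eq:pent}''-type reasoning, and it is the reason the conjecture sits next to Conjecture~\ref{C41} in Section~\ref{sec:open}.

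However, what you have written is not a proof, and the gap is not a technical detail but the entire content of the statement. Your proposed ``strengthened truncated identity'' --- that $(-1)^k$ times the tail equals $(q,q^4;q^5)_\infty\cdot\widetilde H_{k,S}(q)$ with $\widetilde H_{k,S}$ having non-negative coefficients --- is a verbatim restatement of the conjecture: such a $\widetilde H_{k,S}$ always exists as a formal power series (it is the quotient), and asserting its coefficients are non-negative \emph{is} the claim to be proved. So the argument is circular. The intermediate route via $H_{k,S}$ and the sparse signed series $\sum_n(-1)^{t(n)}q^{a_n}$ fails for precisely the reason you identify (an alternating sparse series times a non-negative one need not be non-negative), and the known truncated theta machinery --- \eqref{TPNT} and its Wang--Yee relatives --- establishes positivity only after division by the \emph{matching} infinite product $(q^S,q^{7-S},q^7;q^7)_\infty$, not by an unrelated modulus-$5$ product. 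Your closing sentence concedes that the crux is unproved; as it stands the proposal identifies the difficulty accurately but resolves none of it, which is consistent with the authors leaving the statement as a conjecture.
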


\section*{Acknowledgement}
We thank the anonymous referees for a very careful reading of the
original manuscript.


\begin{thebibliography}{1}

\bibitem{Andrews12} 
G. E. Andrews and M. Merca, 
{\em The truncated pentagonal number theorem}, 
J. Combin. Theory Ser.~A \textbf{119} (2012), 1639--1643.

\bibitem{GaRaAA}
G.~Gasper and M.~Rahman, {\em Basic Hypergeometric Series},
Encyclopedia of Mathematics And Its Applications 35, Cambridge
University Press, Cambridge, 1990.

\bibitem{MercAA}
M. Merca, {\em Truncated theta series and Rogers--Ramanujan
  functions}, Exp. Math. (to appear),
{\tt https://doi.org/10.1080/10586458.2018.1542642}.

\bibitem{MiraAA}
R. Miranda, {\em Algebraic Curves and Riemann Surfaces}, 
Graduate Studies
in Mathematics, vol.~5, Amer. Math. Soc., Providence, R.I.,
1995. 

\bibitem{MiyaAA}
T.  Miyake, {\em Modular Forms}, translated from the Japanese by
 Y.~Maeda, Springer--Verlag, Berlin, 1989. 

\bibitem{PaRaAA} 
P. Paule and C.-S. Radu, {\em Partition analysis, modular functions, and
 computer algebra}, in: Recent Trends in Combinatorics, IMA
 Vol. Math. Appl., vol.~159, Springer, Cham, 2016, pp.~511--543. 

\bibitem{WW} E.~T.~Whittaker and G.~N.~Watson, {\em A Course of
Modern Analysis}, reprint of the 4th ed.\ (1927),
Cambridge University Press, Cambridge, 1996.

\bibitem{YY}
{Y.~Yang}, {\em Defining equations of modular curves},
Adv. Math. {\bf 204} (2006), 481--508.
  

\end{thebibliography}
\end{document}